\documentclass[12pt]{amsart}

\sloppy
\usepackage{amsmath,amssymb,amsthm}
\usepackage{graphics}
\usepackage{enumerate}
\usepackage[dvips]{color}
\usepackage{version}
\usepackage{comment}
\usepackage{stmaryrd}
\usepackage[all]{xy}

\usepackage{todonotes}

\newtheorem{Thm}{Theorem}[section] 
\newtheorem{Lem}[Thm]{Lemma}

\newtheorem{Cor}[Thm]{Corollary}
\newtheorem{Cor.Conj}[Thm]{Corollary of Conjecture}

\newtheorem{Prop}[Thm]{Proposition}
\newtheorem{Prob}[Thm]{Problem}
\newtheorem{Conj}[Thm]{Conjecture}

\newtheorem{Ques}{Question}

\theoremstyle{remark}
\newtheorem{Rem}[Thm]{Remark}

\newtheorem{CauCon}[Thm]{Caution-Conjecture}
\newtheorem{Ex}[Thm]{Example}

\theoremstyle{definition}
\newtheorem{Def}[Thm]{Definition}

\newtheorem{Step}{Step}
\newtheorem{Stp}{Step}

\newtheorem*{ack}{Acknowledgements}

\newcommand{\R}{\ensuremath{\mathbb{R}}}
\newcommand{\C}{\ensuremath{\mathbb{C}}}

\newcommand{\Z}{\ensuremath{\mathbb{Z}}}

\newcommand{\Q}{\mathbb{Q}}

\newcommand{\A}{\mathbb{A}}

\newcommand{\X}{\mathcal{X}}

\newcommand{\OO}{\mathcal{O}}

\newcommand{\PP}{\mathbb{P}}




\begin{document}

\title
{Polystable log Calabi-Yau varieties and Gravitational instantons}

\author{Yuji Odaka}
\date{\today}

\maketitle

\begin{abstract}
Open Calabi-Yau manifolds and log Calabi-Yau varieties  
have been broadly studied over decades. Regarding them as 
``semistable'' objects, we propose 
to consider their good proper subclass, which we regard as certain 
{\it poly}stable ones, morally corresponding to 
{\it semistable with closed (minimal) orbits} as the 
classical analogue of GIT. 

We partially confirm that 
the new polystability 
seems equivalent to the existence of non-compact complete Ricci-flat 
K\"ahler metrics with small 
volume growths, notably many examples of 
gravitational instantons. 
Also, we prove some compactness or polystable reduction type 
results, partially motivated by bubbles of 
compact Ricci-flat metrics. 
\end{abstract}



\section{Introduction}

\subsection{History and motivation}

Since the celebrated existence theorem of 
compact Ricci-flat K\"ahler manifolds \cite{Yau}, 
sometimes under the name as the Calabi conjecture, 
there has been also 
plenty of nice works of constructing 
{\it non-compact complete} Ricci-flat K\"ahler metrics. 
Another origin is the group of various 
gravitational instantons in real four dimensions 
with rapid curvature decays, 
while their signature being Lorenzian \cite{Hawk, GH, EH}, 
in the context of general relativity. 
We are not able to make an exhaustive 
list of references right here due to its numbers, 
but our discussion to follow necessarily 
includes many important examples. The first age general existence theorems in K\"ahler setting 
seem to be 
due to the papers 
\cite{BK87, TY, TYII, BK} (cf., also \cite[p246]{Yau.survey}) outside {\it smooth} complement divisors in 
smooth K\"ahler manifolds. 

\vspace{2mm}

This paper means to be the first part of our attempt to 
lay an algebro-geometric foundation for 
these spaces, especially to allow the complement of {\it singular} 
divisors which are much subtler. 
Therefore, although we basically work over complex numbers $\C$, all algebro-geometric arguments work over 
an arbitrary algebraic closed field of characteristic $0$ at least. 
Such hope for presence of such algebro-geometric 
treatment could have been in the line along 
the conjecture in \cite[section 3, p246]{Yau.survey}, which 
predicted that 
any complete Ricci-flat K\"ahler metrics on $X^{o}$ may be 
compactifiable to a compact K\"ahler manifold $X$ whose 
complement is the support of an anticanonical divisor $D$. 
However, the conjecture of {\it loc.cit}  is not necessarily 
 true as 
discovered counterexamples by \cite{AKL, Goto1, Goto2, Hattori} etc, 
hence this paper restricts our attention to {\it compactifiable case} 
i.e., when $X^{o}$ can be written as complement of analytic closed 
subset of projective normal variety $X$. 

Combined with the Hironaka's log resolution in such compactifiable 
case, there should be 
a log pair $(X,D)$ such that $X$ is smooth, 
$D$ is simple normal crossing such that 
$X^{o}=X\setminus 
{\rm Supp}(D)$. We call such pair {\it log smooth} in 
this paper as we expect no confusion, taking our context to account. 
Also, just in this introduction, we suppose $D$ is a reduced ($\Z$-)divisor for simplicity of exposition, 
while we discuss with $\Q$-divisor $D$ from the next section. 

The 
notion of {\it dlt (divisorially log terminal)} pair of a variety $X$ and 
a boundary divisor $D$, 
introduced 
by Shokurov \cite[\S1]{Shokurov} and more clarified in 
\cite{Sza}, \cite[\S 2.3]{KM} 
(cf., also \cite[0-2-10]{KMM}, \cite{Fjnlt}), 
slightly extends the ubiquitous log smooth setup. 
We propose to 
use this notion of dlt pairs more systematically, 
in much wider context of studying limiting behaviour of 
canonical K\"ahler metrics, their singularities, or its non-compact versions among others. 
For the details of the notion, we refer to \cite[\S 2.3]{KM}, \cite[\S4]{Fjn}, 
\cite{Kol13} for instance. 
As a technical important tool, extension of the 
minimal model program for only klt singular setting (as the groundbreaking  
\cite{BCHM}) 
to even more singular setting will be effectively used as in this paper; 
allowing dlt, log-canonical 
or even semi-log-canonical 
singularities. See e.g.,  
\cite{FjnM, Amb, KK, Gona, Gonb, FG, HX, Fjn.lc, Has, HH} 
as 
foundational results and recent developments in such direction. 

We start with raising the following conjecture, 
partially to set the scene.

\begin{Conj}[Asymptotics]\label{asym.intro}
Suppose $X^{o}$ is a non-proper but separated (Hausdorff) 
variety which only has kawamata-log-terminal singularities 
(assume smoothness for simplicity, if not familiar) over $\C$ 
and 
admits a complete 
Ricci-flat weak K\"ahler metric $g$ 
in which we fix a base point $p$. We suppose that 
$${\rm vol}(B(p,r))\sim c r^{d},$$ 
for $r\to +\infty$ 
where $B(p,r)$ denotes the geodesic balls of radius $r>0$ with 
the center $p$, and call this $d$ as  
the volume growth dimension ${\rm vg}(X^{o},g)$. 

\begin{enumerate}

\item \label{veryopen}
if ${\rm vg}(X^{o},g)>\dim_{\C}(X)$ 
then the logarithmic Iitaka dimension 
(\cite{Iitaka}
\footnote{contains an interesting metaphor from the 70s by its author  
as follows 
in {\it loc.cit} \S11.16 \\ ``$V=\mathbb{P}^{2}
-\bar{D}$ 
is algebraic geometry for dimension $1.5$'' 
(translated from Japanese version)}) 
satisfies 
$\bar{\kappa}(X^{o})=-\infty$. 
In particular, any compactification $(X^{o}\subset)X$ 
is covered by rational curves i.e., 
uniruled
\footnote{For instance, if $X^{o}$ is a smooth surface, more strongly it holds that 
$\bar{\kappa}(X^{o})=-\infty$ is equivalent to that 
$X^{o}$ is dominated by images of $\A^{1}_{\C}=\C$. This is due to the works of 
\cite{MT1, MT2, KeMc}. }.

\item 
If $X^{o}$ is compactified to be a log pair $(X,D)$ 
of smooth projective variety $X$ and its simple normal crossing 
divisor $D(\in |-K_{X}|)$ such that $X\setminus {\rm Supp}(D)=X^{o}$, 
 then the following inequalities hold; 
$$
{\rm dim}_{\R} \tilde{\Delta}(D)={\rm dim}(X^{o})^{an}
\le {\rm vg}(X^{o},g)
\le \dim_{\R}(X)=2\dim_{\C}(X), 
$$ 
where $\tilde{\Delta}(D)$ means the dual intersection cone complex  
\footnote{Note that the ``common'' dual intersection complex is $\Delta(D)=
(\tilde{\Delta}(D)\setminus \vec{0})/\R_{>0}$} of $D$ 
and $(X^{o})^{\rm an}$ is the Berkovich analytification of $X^{o}$ 
for trivial valuation (cf., \cite{Ber90}). 

\end{enumerate}
\end{Conj}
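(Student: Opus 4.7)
The plan is to establish the bounds in (2) first and derive (1) as a consequence, separating (2) into three independent assertions. The upper bound $\text{vg}(X^{o},g)\le 2\dim_{\C}(X)$ is the standard Bishop--Gromov volume comparison for a complete manifold with nonnegative Ricci curvature. The identification $\dim_{\R}\tilde{\Delta}(D)=\dim(X^{o})^{\text{an}}$ is purely algebro-geometric; I would follow the theory of essential skeletons and dual complexes (de~Fernex--Koll\'ar--Xu, Nicaise--Xu, Musta\c{t}\u{a}--Nicaise) to show that $\tilde{\Delta}(D)$, the cone over the dual complex, embeds as a deformation retract of the trivial-valuation Berkovich analytification $(X^{o})^{\text{an}}$, with matching real dimensions coming from maximal mutual-intersection faces of $D$.

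The main analytic content is the lower bound $\dim_{\R}\tilde{\Delta}(D)\le \text{vg}(X^{o},g)$. The strategy is to produce, for each maximal-dimensional face $\sigma\subset\tilde{\Delta}(D)$ (corresponding to a deepest stratum of the SNC boundary $D$), an asymptotic ``cusp region'' inside $X^{o}$ whose geodesic balls satisfy $\text{vol}(B(p,r))\gtrsim r^{\dim\sigma}$. For this I would construct local Calabi-type model metrics adapted to each stratum (following Tian--Yau, Hein--Sun, Conlon--Hein) and compare them with $g$ by means of the complex Monge--Amp\`ere equation for Ricci-flat metrics together with a priori estimates. The delicate point is that $g$ is not assumed to asymptotically match any model near a stratum, so one must either establish such asymptotic structure abstractly from Ricci-flatness and completeness (via monotonicity for Ricci-flat K\"ahler cones and Cheeger--Colding tangent-cone-at-infinity theory), or extract enough from the compactification $(X,D)$ to force the volume lower bound directly via barrier arguments on the pluricanonical volume form.

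For part (1), I would argue by contradiction: suppose $\bar{\kappa}(X^{o})\ge 0$ and pick a nonzero log pluricanonical section $s\in H^{0}(X,m(K_{X}+D))$ after a log resolution of an arbitrary compactification. The section defines a semipositive volume form $|s|^{2/m}$ of finite total mass on $X^{o}$, and Ricci-flatness of $g$ relates the K\"ahler volume $\omega_{g}^{n}/n!$ to $|s|^{2/m}$ through the Monge--Amp\`ere equation. Tracking the growth of the resulting K\"ahler potential should bound $\text{vg}(X^{o},g)\le \dim_{\C}(X)$, contradicting the hypothesis. Once $\bar{\kappa}(X^{o})=-\infty$ is secured, uniruledness of any compactification follows from the log MMP combined with Miyaoka--Mori, which yields a Mori fibration and thus a covering family of rational curves. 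The principal obstacle throughout is controlling the Ricci-flat metric near the potentially singular boundary: both the pluricanonical/Ricci-flat comparison in (1) and the stratumwise asymptotic lower bound in (2) rest on such estimates, which is precisely the technical core the paper is designed to handle.
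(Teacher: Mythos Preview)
The statement you are attempting to prove is labeled \emph{Conjecture} in the paper and is introduced with the words ``We start with raising the following conjecture, partially to set the scene.'' The paper does \emph{not} provide a proof. The only piece the paper justifies is the upper bound $\mathrm{vg}(X^{o},g)\le 2\dim_{\C}(X)$, which it attributes in one line to Bishop--Gromov immediately after the statement; the remaining assertions (the lower bound by $\dim_{\R}\tilde{\Delta}(D)$, the Berkovich identification, and part (i)) are left entirely open.

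Your outline is a plausible strategy, but your closing sentence --- that the required asymptotic control of the Ricci-flat metric near the boundary ``is precisely the technical core the paper is designed to handle'' --- is a misreading. The paper is not designed to handle that; its technical core is the algebro-geometric stability framework (Definitions~\ref{ops}--\ref{ops3}) and the stable-reduction results of \S\ref{st.red.sec}, none of which bear on the analytic estimates your plan requires. In particular, nothing in the paper supplies the stratumwise volume lower bounds or the pluricanonical/Ricci-flat comparison you invoke; those remain genuine open problems as far as this paper is concerned. So there is no ``paper's own proof'' to compare against, and the gaps you correctly identify in your own plan are not filled anywhere in the text.
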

Note that, 
the volume growth dimension ${\rm vg}(X^{o},g)$ 
is at most $2{\rm dim}_{\C}(X^{o})={\rm dim}_{\R}(X)$ 
by the Bishop-Gromov comparison and 
it is not necessarily integer. Indeed, in the classical examples of 
\cite[4.2]{TY}, 
$\frac{2\dim(X)}{\dim(X)+1}$ is attained. 
Also, although we excluded in the above situation, 
{\it compact} Ricci-flat K\"ahler varieties can be also regarded 
as those with volume growth dimension $0$. 

\begin{Rem}
For above \eqref{veryopen} case, 
the converse does {\it not} hold in general. For instance, 
appropriate products of known gravitational instantons 
easily give examples. Also, even an indecomposable (surface) example 
exists by the thesis of Hein \cite{Hajo}. Indeed, the complement of 
type II, III, IV degenerations (resp., type $I^{*}$ degenerations)
in rational elliptic surfaces 
with Tian-Yau metric \cite{TY} are shown to be such examples, 
in \cite[1.5 (ii)]{Hajo} (resp., \cite[1.5 (iii)]{Hajo}). 

More details are as follows. 
For any of those fibers of type II, III, IV, 
after passing to a log resolution, 
the log crepant boundary divisor supported on 
the fiber becomes ``combinatorially same'' i.e., 
they all become simple normal crossing with 
$4$ components with just one  
non-reduced component of multiplicity $2$ penetrating the others. 
On the other hand, we remark that 
the conical angle of the ALG type bubble in that case 
(\cite[Table 1]{Hajo}, \cite[Theorem 1.1]{CVZ19}) is the inverse of the ramification index 
for the necessary {\it semistable reduction}. 
Also, Type $I^{*}$ (minimal) degenerations are originally non-reduced. 
\end{Rem}

This paper mainly focuses on the case when they are 
non-compact and also the volume growth dimension 
should not exceed the original complex dimension of $X$. 
We wish to discuss the remained case in near future, 
and mean this paper to be the first for such series 
(We would like to call the property of 
complete Ricci-flat K\"ahler manifold being 
${\rm vg}(X^{o},g)>\dim_{\C}(X)$ {\it very open}-ness as a pun.)
Anyhow, there are various log Calabi-Yau manifolds $(X,D)$, 
where $X^{o}=X\setminus {\rm Supp}(D)$ are {\it not} known if it  admits any complete Ricc-flat 
K\"ahler metric. Recently, S. Sun asked a question if possible obstruction 
is detectable in a systematic 
algebro-geometric framework. 

Before defining our stability conditions to answer his question, 
we recall further background. The K-stability of polarized variety 
\cite{Tia97, Don02} has a version for log pairs introdued by 
\cite{Don11} which aims to understand 
existence of {\it edge-conical singular} (weak) K\"ahler-Einstein metrics 
with cone angle $2\pi \beta$, when the coefficient $1-\beta$ 
of the divisors are {\it strictly less} than $1$ (i.e., $\beta>0$). 
There is also the framework via 
intersection numbers \cite{OS}. One 
may think it would be natural to use this log K-stability notion, 
even in the case the boundary divisors have coefficients $1$, 
so we recall the following result, formally applying the definition 
of log stabilities \cite{Don11, OS} even for ``conical angle $0$'' 
which do not a priori make sense for metrics. 

\begin{Thm}[{cf., \cite[6.3]{OS}}]\label{OSrev}
Assume $X$ is a smooth projective variety, $(X, D)$ is a log smooth 
Calabi-Yau pair, i.e., $K_X+D$ is numerically 
equivalent to zero divisor with a polarization $L$. 
\footnote{We remark that ``polarization'' in our context refers to 
an ample line bundle unless otherwise stated. }
Then,  $((X,D),L)$ 
is logarithmically K-semistable 
if and only if coefficients of $D$ are at most $1$ 
but it can{\bf not} be log K-polystable unless $\lfloor 
D\rfloor =0$.

More generally, semi-log-canonical Calabi-Yau polarized pair 
$((X,D),L)$ is log K-semistable but it is log K-polystable 
if and only if the pair has only klt singularities. 
\end{Thm}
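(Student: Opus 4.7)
The plan is to apply the intersection-number formula for the logarithmic Donaldson--Futaki invariant from \cite{OS}. For any normal test configuration $(\mathcal{X},\mathcal{D}_{\mathcal{X}},\bar{\mathcal{L}})$ of $((X,D),L)$ compactified over $\mathbb{P}^1$, the log Calabi--Yau condition $K_X+D\equiv 0$ kills the leading $\bar{\mathcal{L}}^{n+1}$-contribution and leaves $\mathrm{DF}$ proportional to the intersection
\[
(K_{\mathcal{X}/\mathbb{P}^1}+\mathcal{D}_{\mathcal{X}})\cdot \bar{\mathcal{L}}^{n},
\]
which measures log discrepancies of components of the central fibre with respect to $(X,D)$.

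For the first statement I would treat the semistability equivalence as follows. If every coefficient of $D$ is at most $1$ then log smoothness implies $(X,D)$ is log canonical; pulling back to a log resolution of $(\mathcal{X},\mathcal{D}_{\mathcal{X}}+\mathcal{X}_{0,\mathrm{red}})$, the lc property yields non-negative discrepancy contributions along every exceptional divisor, hence $\mathrm{DF}\geq 0$. Conversely, if some coefficient $c_i>1$ then $D_i$ has log discrepancy $1-c_i<0$, and the deformation-to-the-normal-cone test configuration along $D_i\times\{0\}$ produces $\mathrm{DF}<0$. The slc generalization of semistability proceeds identically: slc implies non-negativity of log discrepancies along every discrete valuation, which suffices after passing to a dlt modification in the sense of \cite{HX,Kol13}.

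For the polystability obstruction, suppose $\lfloor D\rfloor\neq 0$ and pick a component $D_i\subset D$ of coefficient $1$. The blow-up $\mathcal{X}$ of $X\times \mathbb{A}^1$ along $D_i\times\{0\}$, polarized by $\pi^{*}L-\epsilon E$ for a small rational $\epsilon>0$ (with $E$ the exceptional divisor), is a test configuration whose central fibre $X\cup_{D_i}\mathbb{P}(\mathcal{N}_{D_i/X}\oplus \mathcal{O})$ is reducible and hence non-product. By construction $E$ has log discrepancy $0$ with respect to $(X,D)$, and then $K_X+D\equiv 0$ forces $(K_{\mathcal{X}/\mathbb{P}^1}+\mathcal{D}_{\mathcal{X}})\cdot\bar{\mathcal{L}}^n=0$, so $\mathrm{DF}=0$. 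Consequently $((X,D),L)$ cannot be log K-polystable. The slc case is handled identically by performing a dlt extraction of any non-klt minimal log canonical centre and repeating the construction there.

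The main technical obstacle is to verify $\mathrm{DF}=0$ \emph{identically} in $\epsilon$, not merely to leading order. A clean route is to pass to a log resolution and observe that, since $K_X+D\equiv 0$ and $E$ has vanishing log discrepancy, the class $K_{\mathcal{X}/\mathbb{P}^1}+\mathcal{D}_{\mathcal{X}}$ becomes numerically trivial on the component created by the blow-up, hence its intersection with any multiple of $\bar{\mathcal{L}}^n$ vanishes regardless of $\epsilon$. In the slc generalization, the analogous step demands careful tracking of intersection numbers under the dlt modification, which is routine but delicate discrepancy bookkeeping in the style of \cite{KM,Kol13}.
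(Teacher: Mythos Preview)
The paper does not actually prove Theorem~\ref{OSrev}; it is quoted from \cite[6.3]{OS} as background. Your proposal reconstructs precisely the argument of \cite{OS}: reduce the log Donaldson--Futaki invariant via the intersection formula \cite[Theorem~3.7]{OS} to $(K_{\mathcal{X}/\mathbb{P}^1}+\mathcal{D}_{\mathcal{X}})\cdot\bar{\mathcal{L}}^{n}$, deduce semistability from (semi-)log-canonicity, and destabilize polystability by the deformation to the normal cone along a coefficient-$1$ component. This is the same route, and it is correct.

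One small sharpening: your worry about ``identically in $\epsilon$'' dissolves more cleanly than you indicate. Since $D_i$ has coefficient $1$ and $D_i\times\{0\}$ is smooth of codimension $2$, the blow-up satisfies $K_{\mathcal{X}}+\mathcal{D}_{\mathcal{X}}=\pi^{*}(K_{X\times\mathbb{P}^1}+D\times\mathbb{P}^1)$ on the \emph{entire} total space, not merely on the exceptional component. Hence $K_{\mathcal{X}/\mathbb{P}^1}+\mathcal{D}_{\mathcal{X}}\equiv 0$ globally, and its intersection with any $\bar{\mathcal{L}}^{n}$ vanishes for every $\epsilon$ without further bookkeeping. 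This is exactly the mechanism exploited later in the paper as Proposition~\ref{tc.lc}. In the slc case no separate dlt extraction is needed for the polystability direction: any non-klt lc centre already supplies a log-crepant valuation, and the same normal-cone construction (or, more generally, the argument of \cite[\S6]{OS}) goes through verbatim.
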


Therefore, the main issue is to refine the log stability notion.  
Another motivation for the author 
is for the moduli problem (see \cite{mod}, 
also cf., \S \ref{st.red.sec}). 
We leave the details of the stability conditions to 
section~\ref{stab.sec} while we give some hints here. 
We introduce following three variants with slightly 
different purposes, reflecting the 
subtlety of setup of discussing complete Ricci-flat 
K\"ahler metrics, and for later each usefulness. 

\begin{itemize}
\item (Definition~\ref{ops}) We first introduce {\it weak open K-polystability} 
of open Calabi-Yau pair, which concerns log test configurations 
with only irreducible degeneration fibers. 
As we show, one of our main points of the notion is 
it can be studied fairly systematically 
and easily  (unlike the usual K-stability!) 
in terms of log canonical valuations and log canonical centers. 
In particular, the set of test configurations for this are only 
{\it countably} infinite even over $\C$. 

Another point is that the definition 
also somewhat resembles the original Futaki's obstruction \cite{Fut}. 

\vspace{2mm}

\item (Definition~\ref{ops2}) A somewhat stronger notion 
{\it open K-polystability}, stronger than 
weak open K-polystability, means to detect the existence of 
complete Ricci-flat K\"ahler metric. 

\vspace{2mm}

\item (Definition~\ref{ops3}) The 
{\it strong open K-polystability}, stronger than 
open K-polystability, 
means to detect the existence of 
complete Ricci-flat K\"ahler metric which is limit of 
conical singular canonical metric with angle converging to $0$. 
The definition involves log K-polystability (\cite{Don11, OS}) here. 

\vspace{2mm}

\item We show some stable reduction type results (compactness theorem) in \S\ref{st.red.sec}, partially motivated by 
minimal non-collapsing pointed Gromov-Hausdorff limits of 
compact Ricci-flat K\"ahler metrics for special 
maximal degenerating holomorphic families. 

\end{itemize}

\subsection{Conjectures on metrics}

As mentioned above, one side of the main motivations for 
our attempt to introduce stability notions is following 
expectation on metric existence. At this stage, we might better call it speculation. 

\begin{Conj}[Non-compact Yau-Tian-Donaldson conjecture]\label{YTD}
Take an arbitrary a polarized dlt log Calabi-Yau pair $((X,D),L)$ 
with $\lfloor D\rfloor =\sum_i D_i$, $K_{X}+D\equiv 0$ and $L$ being ample on $X^{o}:=X\setminus {\rm Supp}(\lfloor D \rfloor)$. 
\begin{enumerate}
\item 
Then, if there is a complete Ricci-flat weak K\"ahler metric $g$ on 
$X^{o}$ of the volume growth dimension 
at most $\dim_{\R}(X)=2\dim_{\C}(X)$, 
whose K\"ahler class is 
$c_{1}(L)|_{X^{o}}$, 
$(X^{o},L^{o})$ is weakly open K-polystable. 

\item 
Furthermore, such a complete Ricci-flat K\"ahler metric with 
any base point is the pointed Gromov-Hausdorff limit of 
the conical singular weak cscK metric on some polarized 
dlt log Calabi-Yau 
compactification 
$((X,(1-\epsilon)D),L_{\epsilon})$ for $\epsilon\to 0$ with appropriate 
base points with $c_{1}(L_{\epsilon}|_{X^{o}})\to c_{1}(L|_{X^{o}})$ for 
$\epsilon \to 0$, if and only if $(X^{o},L^{o})$ is strongly 
open K-polystable. 
\end{enumerate}
\end{Conj}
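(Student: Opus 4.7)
The plan is to attack the two parts separately, though both rely on bridging the analytic side (complete Ricci-flat K\"ahler metrics, Gromov-Hausdorff limits) and the algebraic side (log test configurations and their Donaldson-Futaki type invariants). Throughout, I would take for granted the three stability notions from Definitions~\ref{ops}, \ref{ops2}, \ref{ops3} and use Theorem~\ref{OSrev} freely.

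For part (1), the first step is to reduce the problem to testing against a single irreducible log test configuration $(\mathcal{X},\mathcal{D})\to \A^{1}$ whose central fiber corresponds to a log canonical valuation/center on $(X,D)$ --- this is precisely the simplification advertised after Definition~\ref{ops}, namely that only countably many test configurations need be considered. The second step is to produce, for each such test configuration, a geodesic ray in a space of K\"ahler potentials on $X^{o}$ compatible with the given complete Ricci-flat metric $g$, adapting the Phong-Sturm and Berman-Boucksom-Jonsson machinery to the non-compact setting. The third step is an asymptotic slope formula: the Monge-Amp\`ere energy slope along this ray should equal the corresponding open Donaldson-Futaki invariant, and the Ricci-flat condition $\mathrm{Ric}(g)=0$ combined with integration by parts should force this slope to vanish, yielding weak open K-polystability.

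For part (2), the "if" direction I would first invoke Theorem~\ref{OSrev} together with Berman-style existence theorems for conical K\"ahler-Einstein metrics: strong open K-polystability should, by Definition~\ref{ops3}, imply log K-polystability of $((X,(1-\epsilon)D),L_{\epsilon})$ for all small $\epsilon>0$, hence the existence of conical weak cscK metrics $g_{\epsilon}$. Cheeger-Colding-Naber a priori estimates then extract a pointed Gromov-Hausdorff subsequential limit whose regular part is isometric to a complete Ricci-flat K\"ahler metric on $X^{o}$, and uniqueness of Ricci-flat metrics in the relevant K\"ahler class identifies the limit with the given $g$. The "only if" direction would use the given approximating sequence to produce a compatible family of log test configurations whose Donaldson-Futaki invariants pass to the limit, upgrading weak open K-polystability to the strong version and matching the degeneration data with the conical angles going to zero.

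The main obstacle, especially pronounced in part (2), will be controlling the asymptotic geometry near $\mathrm{Supp}(D)$ at infinity of $X^{o}$: coefficient-$1$ components collapse volume and reshape the tangent-cone-at-infinity structure in ways not covered by the existing asymptotically conical or cylindrical non-compact Yau-Tian-Donaldson frameworks. I expect one must develop local structure theorems stratified by the simple normal crossing combinatorics of $D$, compatible with the dual cone complex $\tilde{\Delta}(D)$ appearing in Conjecture~\ref{asym.intro}, together with a matching refinement of the test-configuration bookkeeping that aligns divisorial log canonical valuations with tangent-cone directions at infinity. Without such a dictionary, even the slope formula in part (1) is at risk of a boundary defect term that is hard to rule out.
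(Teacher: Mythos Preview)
The statement you are attempting to prove is labeled \emph{Conjecture}~\ref{YTD} in the paper, and indeed the paper offers no proof of it. What the paper does provide is (a) a partial confirmation of one implication via Berman's result \cite[4.8]{Ber}, stated as the Corollary immediately following Question~\ref{approx1}, which covers only the anti-log-canonically polarized situation and only the direction ``conical KE metrics exist $\Rightarrow$ strongly open K-polystable''; and (b) indirect evidence through the examples in Theorem~\ref{ops.ex}, where the various stability notions are verified for classes of $(X^{o},L^{o})$ already known to carry complete Ricci-flat K\"ahler metrics. There is therefore no ``paper's own proof'' against which to compare your proposal.

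That said, your outline is a plausible line of attack on the open problem, and you have correctly identified the genuine obstruction: the geodesic-ray and slope-formula machinery you invoke is developed for compact $(X,L)$ or for asymptotically conical/cylindrical ends, and the coefficient-$1$ boundary forces volume growth strictly below $2\dim_{\C}X$, so the boundary terms in any integration-by-parts argument are not a priori controlled. One concrete gap to flag: in part~(2), the ``if'' direction presupposes that log K-polystability of $((X,(1-\epsilon)D),L_{\epsilon})$ implies existence of a conical weak cscK metric, but for general (non-anticanonical) polarizations $L_{\epsilon}$ this is itself an open instance of the Yau--Tian--Donaldson conjecture, so you would be assuming something at least as hard as what you aim to prove. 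The paper's own partial result via \cite{Ber} sidesteps this precisely by restricting to $L_{\epsilon}\equiv -(K_X+(1-\epsilon)D)$.
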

This obviously reflects the following question. 
\begin{Ques}[Approximability / Metric compactifiability]\label{approx1}
For an arbitrary open complete Ricci-flat K\"ahler manifold $(X^{o},g)$ 
with the volume growth dimension at most ${\rm dim}_{\C}(X)$, 
is there a compactification $X^{o}\subset X$ 
such that $(X,X\setminus X^{o}=:D)$ is dlt, where $D$ is 
a reduced integral divisor, $g$ is the limit of a sequence of 
conical singular weak cscK metrics (cf., e.g., \cite{KZ, LWZ}) 
$g_{i}(i=1,2,\cdots)$ on $X$ 
whose singularities are supported in $D$, whose 
conical angles $\beta_{i}$ converge to $0$? 
If not true in general, what conditions on $g$ ensures such approximability? 
\end{Ques}
For a special case of the above question for Tian-Yau metric \cite{TY}, 
especially when $X$ is a Fano manifold and $D$ is smooth (cf., 
Theorem~\ref{ops.ex} \eqref{..1}, \eqref{.1app} later) 
the answer is expected to be affirmative 
in \cite[\S 6, before Conjecture 1]{Don11}. 
Also, a partial confirmation of Conjecture~\ref{YTD} below 
follows from a result of Berman \cite{Ber} stratightforwardly. 

\begin{Cor}[{of \cite[4.8]{Ber}}]
In the setting of Conjecture~\ref{YTD}, under further assumption 
that $L_{\epsilon}\equiv -(K_{X}+(1-\epsilon)D)$, 
existence of a sequence of conical singular weak K\"ahler-Einstein 
metrics for $((X,(1-\epsilon)D),L_{\epsilon})$ with 
$\epsilon \to 0$ (which is expected to automatically converge to a 
complete Ricci-flat weak K\"ahler metric on $X^{o}$) 
implies strongly open K-polystability of 
$(X^{o},L^{o})$. 
\end{Cor}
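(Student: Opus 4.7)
The plan is to apply Berman's theorem \cite[4.8]{Ber} once per parameter $\epsilon>0$ and then observe that the resulting sequence of log K-polystability statements is exactly the content of strong open K-polystability in Definition~\ref{ops3}.

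First, I would fix any $\epsilon\in(0,1)$ and examine the log Fano pair $((X,(1-\epsilon)D),L_\epsilon)$ with the prescribed $L_\epsilon\equiv -(K_X+(1-\epsilon)D)$. Since $(X,D)$ is a dlt log Calabi-Yau pair and we strictly decrease every coefficient of $D$ by $\epsilon$, the perturbed pair $(X,(1-\epsilon)D)$ is klt, so Berman's framework applies verbatim. The hypothesis supplies a conical singular weak K\"ahler-Einstein metric with cone angle proportional to $\epsilon$ along $D$ and K\"ahler class $c_1(L_\epsilon)$; hence by \cite[4.8]{Ber}, $((X,(1-\epsilon)D),L_\epsilon)$ is log K-polystable in the sense of \cite{Don11, OS}. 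This holds for every $\epsilon>0$ for which the metric is provided, producing a one-parameter family of log K-polystable compactifications of $(X^o,L^o)$ with $c_1(L_\epsilon|_{X^o})\to c_1(L|_{X^o})$ as $\epsilon\to 0$.

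Second, I would match this family against the definition of strong open K-polystability. As indicated after Definition~\ref{ops3}, that notion is built precisely to encode the existence of such an approximating sequence of log K-polystable pairs $((X,(1-\epsilon)D),L_\epsilon)$ with cone angle going to zero and polarizations converging to $L$ on $X^o$. So once the log K-polystability is produced by Berman at each level $\epsilon$, the definition is fulfilled by the given compactification $(X,D)$ itself, yielding strong open K-polystability of $(X^o,L^o)$.

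The main potential obstacle is a bookkeeping one rather than a conceptual one: one must verify that the particular family $\{((X,(1-\epsilon)D),L_\epsilon)\}_{\epsilon>0}$ produced by the hypothesis is an admissible ``approximating family'' in the sense of Definition~\ref{ops3}, i.e.\ that the convergence $L_\epsilon\to L$ (equivalently, $-(K_X+(1-\epsilon)D)\to -(K_X+D)\equiv 0$ plus the chosen polarization $L^o$ on $X^o$) and the dlt compatibility with $\lfloor D\rfloor=\sum_i D_i$ meet the prerequisites of that definition. Once this compatibility is confirmed, no further computation is needed: Berman's theorem does the metric-to-stability step at every $\epsilon$ and the corollary follows.
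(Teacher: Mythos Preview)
Your approach is exactly what the paper intends: it states the corollary as following ``straightforwardly'' from \cite[4.8]{Ber} without giving a separate proof, and your two-step argument (apply Berman at each $\epsilon$ to obtain log K-polystability of $((X,(1-\epsilon)D),L_\epsilon)$, then feed this into Definition~\ref{ops3}) is precisely that straightforward deduction. The bookkeeping caveat you raise is fair but minor: since $K_X+D\equiv 0$ and $D=\lfloor D\rfloor$ here, one has $L_\epsilon\equiv \epsilon D$, so $D$ is ample (hence nef) and the family $L_\epsilon$ fits the shape $L(\tfrac{c}{\beta}\lfloor D\rfloor)$ required in Definition~\ref{ops3} up to an overall rescaling, which is harmless for log K-polystability.
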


In general, there are many difficulties to work on K\"ahler geometry for 
open manifolds compared with compact case. 
For instance, even for relatively simple complete K\"ahler manifolds, 
$\partial \bar{\partial}$ lemma does {\it not} holds. 
(Nevertheless, some affirmative results are in 
\cite[Theorem1]{Del}, \cite[3.11, A.3]{CHI}, for instance.) 
We also remark there is another systematically studied class of 
non-compact complete canonical K\"ahler metrics; the 
K\"ahler-Einstein metrics with {\it negative Ricci curvature} 
i.e., hyperbolic metrics on 
smooth locus of Deligne-Mumford stable curves and its higher dimensional 
extension to 
KSBA varieties (semi-log-canonical models), 
whose existence (\cite{BermanGuenancia}) 
matches to corresponding algebro-geometric K-stability results (\cite{Oda, Od, OS}). 

\vspace{2mm}
This paper focuses on the Ricci-flat case and 
more precisely on concrete analysis of our new stability notions 
which provide many evidences to the above conjecture~\ref{YTD} on 
the existence of complete Ricci-flat K\"ahler metrics, 
and also explore an application to the 
non-collapsing limits of families of compact Ricci-flat K\"ahler metrics, 
and the moduli compactification problem: 
see our next subsection  
and 
\S\ref{st.red.sec}. 
The supporting evidences for the above conjecture \ref{YTD}
are obtained from some results of 
concrete analysis of the stability notion below. 

\begin{Thm}[{A weaker version of Theorem\ref{ops.ex}}]\label{ops.ex.intro}
Suppose $(X^{o},L^{o})$ is an open 
$n$-dimensional polarized Calabi-Yau pair, 
and its compactifying polarized smooth log Calabi-Yau pair is denoted as 
$((X,D),L)$. In this introduction, 
we assume $X$ is smooth and $D$ is a reduced (coefficients $1$)  
integral simple normal 
crossing divisor, for simplicity just in this introduction. (See more 
singular extensions 
in Theorem~\ref{ops.ex}.)

\begin{enumerate}

\item 
If $D$ is smooth, then 
$(X^{o},L^{o})$ is weakly open K-polystable for any $L$. 
If $X$ is a Fano manifold, then it is strongly open K-polystable. 
(We extend to slightly more singular case in 
Theorem\ref{ops.ex} \ref{.1}). 

\item If $X^{o}$ is algebraic torus, then strongly open K-polystable.

\item Moreover, if $X^{o}$ is semi-abelian variety, 
then strongly open K-polystable. 

\item If $G:={\rm Aut}^{o}(X)$ is reductive and 
$((X,D),L)$ is weakly open K-polystable, then 
$D$ is GIT polystable with respect to the 
$G$-action. 

\item cluster log surface (cf e.g. \cite{GHK}) is not 
even weakly open K-polystable with respect to 
any polarization. 

\item rational elliptic surface $X$ with $D$ a 
nodal reduced fiber of $I_{\nu} (\nu \ge 1)$ Kodaira type, 
then $(X^{o},L^{o})$ is strongly open K-polystable 
at least for some $L^o$. 

\item 
If $((X,D),L)$ is weakly open K-polystable and 
some irreducible component $D_{i}$ of $D$ 
satisfies that 
\begin{itemize}
\item 
$L|_{X\setminus D_{i}}\sim_{\Q} 0$ 
and 
\item $D_{i}$ is ample (e.g. when 
$\rho(X)=1$),
\end{itemize}
 then $((X,D),L)|_{X\setminus D_{i}}$ 
 is the affine 
cone of a certain $(n-1)$-dimensional 
dlt log Calabi-Yau pair $((X',D'),L'):=
((D_{i},\cup_{j\neq i}D_{j}\cap D_{i}),N_{D_{i}/X}).$

\end{enumerate}

\end{Thm}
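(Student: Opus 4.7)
The plan is to construct the ``degeneration to the normal cone of $D_i$'' as a log test configuration, verify that its central fiber is the irreducible projective cone $\bar{C}(D_i,N_{D_i/X})$, and then apply weak open K-polystability to this test configuration to force it to be trivial. This identifies $(X,D,L)$ itself with the projective cone pair, and removing $D_i$ then yields the claimed affine cone structure on $X\setminus D_i$.

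First, from $L|_{X\setminus D_i}\sim_\Q 0$ and the irreducibility of $D_i$ one obtains $L\sim_\Q cD_i$ in $\mathrm{Pic}(X)_\Q$ for a unique $c\in\Q_{>0}$ (positivity from ampleness of both $L$ and $D_i$); after passing to a Veronese I may assume $c\in\Z_{>0}$. Since $D_i$ sits in $\lfloor D\rfloor$ with coefficient $1$, the divisorial valuation $\mathrm{ord}_{D_i}$ is a log canonical place of $(X,D)$, placing it squarely within the class of valuations that Definition~\ref{ops} of weak open K-polystability probes. I would then build the Rees-algebra test configuration $(\mathcal{X},\mathcal{D})\to\A^1$ attached to the filtration $F^kR_m:=\{s\in H^0(X,mL):\mathrm{ord}_{D_i}(s)\ge k\}$ on $R:=R(X,L)$. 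Using the short exact sequence
\[
0\to\mathcal{O}_X((m-1)cD_i)\to\mathcal{O}_X(mcD_i)\to\mathcal{O}_{D_i}(mc\,N_{D_i/X})\to 0
\]
combined with Serre vanishing (valid because $D_i$ is ample), a direct computation identifies $\mathrm{gr}_F R\cong R(D_i,N_{D_i/X})[\tau]^{(c)}$, so that the central fiber of this test configuration is the irreducible projective cone $\bar{C}:=\bar{C}(D_i,N_{D_i/X})$. Under this flow, $D_i$ degenerates to the hyperplane-at-infinity $D_i^\infty\subset\bar{C}$ and each $D_j$ ($j\ne i$) degenerates to the projective subcone over $D_j\cap D_i$; by adjunction $K_{D_i}+\sum_{j\ne i}D_j|_{D_i}\equiv 0$, so the limit pair is again dlt log Calabi--Yau.

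Next I would apply weak open K-polystability of $((X,D),L)$ to this specific test configuration, which has irreducible central fiber and is driven by the log canonical place $\mathrm{ord}_{D_i}$. The log Calabi--Yau condition $K_X+D\equiv 0$ and the reducedness of the central fiber together force the associated numerical invariant in Definition~\ref{ops} to vanish (a Ross--Thomas-style slope cancellation along the log canonical place), and polystability then upgrades this to the statement that the test configuration is itself a product, i.e.\ there is a $\G_m$-action on $(X,D,L)$ inducing the given filtration. Since the central fiber of a product test configuration is isomorphic to $X$ itself, this forces $(X,D,L)\cong (\bar{C},D_i^\infty+\sum_{j\ne i}\bar{C}(D_j\cap D_i,\dots),\mathcal{O}_{\bar{C}}(c))$ as polarized dlt log Calabi--Yau pairs, with $X$ inheriting the natural contracting $\G_m$-action on the cone.

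Restricting this isomorphism to the complement of $D_i$ yields
\[
X\setminus D_i \;\cong\; \bar{C}\setminus D_i^\infty \;=\; C(D_i,N_{D_i/X}) \;=\; \Spec\bigoplus_{n\ge 0}H^0(D_i,nN_{D_i/X}),
\]
matching the boundary $\sum_{j\ne i}D_j|_{X\setminus D_i}$ with the affine cone on $\sum_{j\ne i}(D_j\cap D_i)$ and the polarization data on $D_i$ with $N_{D_i/X}$, giving the stated identification. The main obstacle I expect is the third step: one must extract from Definition~\ref{ops} the precise sense in which this normal-cone degeneration is a permitted test object, check that its associated invariant vanishes in the log Calabi--Yau regime, and verify that weak open K-polystability genuinely forces equivariant triviality rather than merely non-negativity of an invariant, so that the cone structure on $X$ itself (not only on its central-fiber limit) is recovered. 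The valuative reformulation via log canonical places advertised in the introduction should make this concrete, but it is where the substantive verification lies.
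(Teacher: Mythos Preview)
Your proposal is correct and follows essentially the same route as the paper. The paper's proof of item \eqref{n.} in Theorem~\ref{ops.ex} reads in its entirety ``follows from Lemma~\ref{contract}''; unpacking that lemma gives precisely your argument. The only cosmetic difference is presentation: the paper constructs the degeneration to the normal cone geometrically (blow up $D_i\times\{0\}$ in $X\times\A^1$, then contract the strict transform of $X\times\{0\}$ via $b^*(L\times\A^1)(-cE)$, using $L|_{X\setminus D_i}\sim_\Q 0$ to make this a genuine contraction), whereas you phrase the same object via the Rees algebra of the $\mathrm{ord}_{D_i}$-filtration. Your ``main obstacle'' paragraph is exactly what the paper handles by Proposition~\ref{tc.lc}: the constructed test configuration is plt-type with irreducible central fiber, and since $\X_0$ corresponds to the log canonical place $\mathrm{ord}_{D_i}$ (coefficient $1$ in $D$), the discrepancy criterion there gives $\mathrm{DF}=0$ directly, so weak open K-polystability forces the product structure and hence $X\cong\bar C(D_i,N_{D_i/X})$.
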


The above results match with known existence of 
gravitational instantons such as: 
\cite{TY, BK}, \cite{Hajo} (cf., also \cite{CJL}), 
\cite{GChen, CCI, CCIII}, 
\cite{ACyl3,ACyln} for example.


\begin{ack}We would like to thank the following friends and colleagues 
for fruitful discussions and comments: 
K. Hattori, H-J.Hein, S. Honda, R. Kobayashi, H.Nakajima, 
Y.Oshima, C.Spotti, S.Sun 
and  J.Viaclovsky. 
The author is partially supported by 
KAKENHI 18K13389 (Grant-in-Aid for Early-Career Scientists), 
KAKENHI 16H06335 (Grant-in-Aid for Scientific Research (S)) 
during this research. 
\end{ack}


\section{The stability conditions}\label{stab.sec}

This section introduces 
various stability notions for ``polarized Calabi-Yau varieties'' 
in somewhat generalized senses which are allowed to be either  non-proper, 
or ``log pairs'' i.e., $(X,D)$ with $K_{X}+D\equiv 0$, or 
with non-normal reducible $X$. The author believes the 
number of variants for these stability notions below indirectly 
reflect the subtlety of 
handling general complete Ricci-flat K\"ahler metrics. 
Indeed, we use most of the notions 
introduced here in later sections. 

\subsection{Irreducible case}

Firstly, we make the following usual setup. 

\begin{Def}\label{CY.notation}
\begin{enumerate}
\item \label{notation1}
A polarized (dlt) 
log Calabi-Yau pair $((X,D),L)$ in this paper 
means 
\begin{itemize}
\item $K_{X}$ is $\Q$-Cartier (``$\Q$-Gorenstein'' condition), 
\footnote{We put it here for simplicity. 
It is not necessary for various part of the following discussions.}
\item $D$ is $\Q$-divisor such that the pair $(X,D)$ is dlt, $K_{X}+D\equiv 0$ (which is equivalent to  
$K_{X}+D \sim_{\Q} 0$ by \cite{FjnM, Gona}), 
\item the rounddown part of $D$ which we write as $\lfloor D \rfloor$, 
and its fractional part 
$\{D\}(=D-\lfloor D \rfloor)$ are both $\mathbb{Q}$-Cartier
\item $L$ is ample . 
\end{itemize}
We denote 
$X^{o}:=X\setminus {\rm Supp}\lfloor D\rfloor 
=X^{\rm klt}$ as its klt locus, a Zariski open subset, 
and call it {\it open locus} sometimes in this paper. 
We also set 
$L^{o}:=L|_{X^{o}}$. 
Any polarized log Calabi-Yau pair is 
log K-semistable as reviewed as 
Theorem~\ref{OSrev} (\cite[6.3]{OS}) above. 
If further $L=-K_{(X,\{D\})}:=-(K_{X}+\{D\})$, we call $((X,D),L)$ 
is {\it anti-log-canonically polarized}. 

Also, if we consider $((X,D),L)$ which satisfies all above condition 
except for the dlt property while $(X,D)$ is still log canonical (resp., 
semi-log-canonical), each time we write so as 
{\it log-canonical (resp., semi-log-canonical) 
polarized log Calabi-Yau pair. }

\item \label{oCY}
A non-proper polarized log  
pair $(Y,M)$ is said to be 
{\it open polarized Calabi-Yau pair} in this notes if it
is compactifiable to a polarized log Calabi-Yau pair 
$((X,D),L)$ which satisfies $\lfloor D \rfloor \neq 0$, 
associated with a fixed isomorphism 
$(X^{o},L^{o})\simeq (Y,M)$. 
\end{enumerate}
\end{Def}

\begin{Rem}\label{conn.compo}
Note that the above compactifications are not unique 
for a fixed open polarized Calabi-Yau pair as well-known. 
Nevertheless, as an easy obvious case of 
the Shokurov-Koll\'ar connectedness principle, between different such compactifications, 
still the set of the connected components of $\lfloor D\rfloor$ are canonically bijective to each other. 
They are at most two connected components of $\lfloor D\rfloor$ 
(cf., e.g., \cite[2.1]{FjnM}, \cite[5.3]{Gona}). 
\end{Rem}

Now we define weak polystability notion and prepare its 
foundation.

\begin{Def}[Weak open polystability]\label{ops}
\begin{enumerate}

\item \label{pltt} For a 
polarized log Calabi-Yau pair $((X,D),L)$, 
a log test configuration (see \cite[\S 5, \S 6]{Don11}, \cite[\S 3]{OS} 
for the definition) 
$((\mathcal{X},\mathcal{D}),\mathcal{L})$ 
is called {\it plt-type} in this paper, 
if $(\mathcal{X},\X_{0})$ is plt. 
\footnote{plt means purely log terminal. See 
\cite[\S 2.3]{KM} for details if not familiar. 
This condition in particular 
implies $\X_{0}$ is klt and there is associated 
dreamy valuation for $K(X\times \A^{1})$.}

\item 
Take an arbitrary log Calabi-Yau pair $(X,D)$ 
which is dlt so that for its any polarization $L$ log K-semistability holds by 
Theorem~\ref{OSrev} above. We denote the restricted polarization as 
$L^o:=L|_{X^o}$. 
A pair $(X^o,L^o)$ or a triple $((X,D),L^o)$ 
is said to be 
{\it weakly open K-polystable} 
if any plt-type 
test configuration $((\X,\mathcal{D}),\mathcal{L})$ of $((X,D),L)$ 
whose log Donaldson-Futaki invariant vanishes 
${\rm DF}((\X,\mathcal{D}),\mathcal{L})=0$ 
(\cite[\S 5, \S 6]{Don11}, \cite[\S 3]{OS}) 
is product log test configuration i.e., 
$((X,D),L)$-fiber bundle over $\mathbb{P}^{1}$. 
We will see in Proposition \ref{open.st2}
that the notion only depends on $(X^o,L^o)$ to 
$L$, hence the terminology. 

Furthermore, if the only product log test configuration is trivial one, 
i.e., when ${\rm Aut}((X,D),L)$ is finite, then we call the triple 
$((X,D),L^o)$ 
{\it weakly open K-stable}. 
\end{enumerate}
\end{Def}

The plt-type condition together with the automatic Cartierness 
property of $\X_{0}$ 
imply that $\X_{0}$ is klt and vice versa (\cite[\S 2.3]{KM}). 
If $L=-K_{X}$ modulo $\Q D$, 
then $\mathcal{L}\equiv -K_{\X/\mathbb{P}^{1}}$ modulo $\Q \mathcal{D}$ 
(cf., \cite[read 2.3, 2.4]{OSS}). 
The notion may remind the experts of the notion of ``special test configurations'' 
for anticanonically polarized $\mathbb{Q}$-Fano varieties 
(\cite{DT, Tia97, LX}) as examples, but note that we do not require 
$\mathcal{L}$ is anticanonical which will be important later on. 
Therefore, our notion strictly extends the notion of 
special test configuration of Fano varieties. 
Also, it is easy to see that if we would allow 
{\it log canonical} Calabi-Yau variety as $\X_{0}$, then 
\cite{TY} examples do not satisfy the corresponding polystability 
notion e.g. $\PP^{2}$ degenerating into the projective 
cone over elliptic curve.

\vspace{1mm}
We make following remark about the {\it product-ness} 
of log test configuration.

\begin{Prop}\label{aut.prod2}
The automorphism groups 
${\rm Aut}^{o}((X,D),L)$ and ${\rm Aut}^{o}(X^{o},L^o)$ 
are different for general triple $((X,D),L)$, while they coincide 
when $K_{X}+D=0$ (assuming the base field $k$ has uncountable order). 
${\rm Aut}^{o}(-)$ mean the identity connected component 
of the automorphism groups here and henceforth. 
\end{Prop}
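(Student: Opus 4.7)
My proof plan splits according to the two assertions.

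For the first (``differ in general''), I would exhibit an explicit example: take $(X,D,L)=(\PP^n,H,\OO(1))$ with $n\ge 2$ and $H$ a hyperplane, so that $X^o=\A^n$ and $K_X+D=-(n-1)H\not\equiv 0$. Then $\Aut^{o}((X,D),L)$ is the $(n+n^2)$-dimensional affine group, whereas $\Aut^{o}(\A^n)$ as an ind-algebraic group properly contains it via Jonqui\`ere-type polynomial automorphisms such as $(x_1,x_2)\mapsto (x_1,x_2+x_1^2)$.

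For the second assertion (``coincide when $K_X+D\equiv 0$''), the restriction $r\colon\Aut^{o}((X,D),L)\to G:=\Aut^{o}(X^o,L^o)$ is injective by Zariski-density of $X^o$ in $X$; the content is surjectivity. Since $(X,D)$ is dlt with $K_X+D\equiv 0$, by \cite{FjnM,Gona} we have $K_X+D\sim_{\Q}0$, so pick $n\ge 1$ with $n(K_X+D)$ trivial as Cartier divisor, and a nowhere-vanishing generator $\omega\in H^0(X,\OO_X(n(K_X+D)))$. Its restriction $\omega^o$ is a nowhere-zero section of $nK_{X^o}$. For each $g\in G$, the ratio $g^*\omega^o/\omega^o$ lies in $\OO(X^o)^{\times}$; by Rosenlicht's theorem $\OO(X^o)^{\times}/k^{\times}$ is a finitely generated free abelian group, hence discrete, and the connectedness of $G$ forces $g^*\omega=c_g\cdot\omega$ as rational sections of $nK_X$ for some $c_g\in k^{\times}$.

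Using the formula $v(\omega)=n(a(v;X,D)-1)$ for divisorial $v\in\Val(K(X))$, scalar-invariance of $\omega$ under $G$ means $G$ preserves the log discrepancy function, so the countable set $S$ of log canonical places of $(X,D)$ --- in bijection with primitive lattice points of the rational polyhedral dual complex $\Delta(D)$ (cf.\ \cite[\S4]{Fjn}) --- is $G$-stable. Here is where uncountability of $k$ enters: any $G$-orbit in $S$ is the morphic image of the irreducible variety $G$, and over an uncountable algebraically closed field an irreducible variety cannot be covered by countably many proper closed subvarieties, so each orbit reduces to a single point. In particular each $v_{D_i}$ for $D_i$ a component of $\lfloor D\rfloor$ is $G$-fixed, so every $g\in G$ is regular at the generic point $\eta_{D_i}$; combined with regularity on $X^o$, the birational extension $\tilde g\colon X\dashrightarrow X$ is regular in codimension one on the normal proper variety $X$, hence extends to a regular automorphism of $X$ by the valuative criterion. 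This automorphism preserves $D$ (on the regular model, $g^*\omega=c_g\omega$ as rational sections of $nK_X$ forces $g^*D=D$) and preserves $L$, since $\ker(\mathrm{Pic}(X)\to\mathrm{Pic}(X^o))\subseteq\bigoplus_i\Z[D_i]$ is discrete and $G$ is connected.

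The main obstacle is the uncountability step: one must ensure the $G$-action on divisorial valuations is algebraic in a suitable sense so that ``orbit lies in a countable set'' implies ``orbit is a point''. A clean framework is the Berkovich analytification $X^{\mathrm{an}}$ over the trivial valuation on $k$, where the essential skeleton containing the log canonical places embeds as a rational polyhedral complex on which $G$ acts continuously, reducing the claim to the elementary observation about irreducible varieties over uncountable fields.
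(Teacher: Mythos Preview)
Your proof is correct and rests on the same core mechanism as the paper's: the countability of log canonical places of a dlt log Calabi--Yau pair, combined with connectedness of $G$ over an uncountable field, forces $G$ to fix each such place and hence to extend across the boundary.

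The execution differs. The paper packages the countability into an auxiliary lemma (there are only countably many log-crepant dlt compactifications of $X^{o}$ up to isomorphism, since boundary components must be lc places of any fixed model), and then asserts in one line that a connected algebraic subgroup $G^{o}\subset\Aut^{o}(X^{o},L^{o})$ therefore extends to $((X,D),L)$. Your route is more explicit and arguably more transparent: you use the global volume form $\omega$ trivializing $n(K_{X}+D)$, invoke Rosenlicht to get $g^{*}\omega=c_{g}\omega$ for $g\in G$, deduce that $G$ preserves the log discrepancy function (hence the set of lc places), and then carry out the extension step concretely via ``each $v_{D_{i}}$ is $G$-fixed $\Rightarrow$ the birational self-map is regular in codimension one $\Rightarrow$ it extends by normality and properness''. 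This buys you an honest reason why the lc places are intrinsic to $X^{o}$, which the paper leaves implicit. Your final paragraph also flags the one genuinely delicate point that both proofs share---making rigorous that a connected $G$-orbit in a countable set of valuations collapses to a point---and your suggestion to pass to the essential skeleton inside $(X^{o})^{\mathrm{an}}$ is a clean way to handle it; the paper does not spell this out.

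One small correction: your displayed formula for the order of $\omega$ along a divisorial valuation is off by a normalization (the correct identity, viewing $\omega$ as a rational section of $nK_{Y}$ on a model $Y$, is $\mathrm{ord}_{E}(\omega)=n\cdot a(E;X,D)$ rather than $n(a-1)$), but this does not affect the argument since all you need is that scalar-invariance of $\omega$ forces $G$-invariance of the discrepancy function.
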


\begin{proof}[proof of Proposition\ref{aut.prod2}]
As an example, of which the above two automorphism groups are different, 
simply we can take the affine space $X^{o}=\A^{n}$ so that 
$(X^{o},L|_{X^{o}})$ includes the whole polynomial 
ring $k[X_{2},\cdots,X_{n}](\ni f)$ 
as 
$$(a_{1},\cdots,a_{n})\mapsto 
(a_{1}+f(a_{2},\cdots,a_{n}),a_{2},\cdots,a_{n}),$$
hence far from being an algebraic group, 
as it would be of course finite dimensinal if so. 

\begin{Lem}\label{crep.class}
For any fixed dlt log Calabi-Yau pair $(X,D)$, 
there are only at most countably many log crepant 
$(X',D')$ with $X\setminus {\rm Supp}\lfloor D\rfloor=
X'\setminus {\rm Supp}\lfloor D'\rfloor$, 
modulo the isomorphisms. 
\end{Lem}

Note that for instance, there are countable infinite order of 
toric pairs for each fixed dimension more than $1$. 
\begin{proof}[proof of Lemma~\ref{crep.class}]
Since $(X',D')$ is log crepant to $(X,D)$ for the 
obvious reason, all the irreducible components of $D'$ 
are log canonical places for the log pair $(X,D)$. From 
Zariski lemma (cf., e.g., \cite[2.45]{KM}) and the dlt property of $(X,D)$, 
there are only countably many such log canonical places 
hence we conclude the proof. 
\end{proof}
We now return to continue the proof of Proposition 
\ref{aut.prod2}. We take 
a connected algebraic subgroup of ${\rm Aut}^{o}(X^{o},L|_{X^{o}})$ 
which we denote as $G^{o}$. Then, thanks to the cute lemma 
above \ref{crep.class}, $((X,D),L)$ also admit natural 
$G^{o}$-action so that $G^{o}\subset 
{\rm Aut}^{o}((X,D),L)$. Then, ranging $G^{o}$ 
finishs the proof. 
\end{proof}

\begin{Cor}[Product-ness]\label{prod}
Any plt-type log test configuration $((\mathcal{X},\mathcal{D}),\mathcal{L})$ 
of $((X,D),L)$ which contains a 
product test configuration of 
$(X^{o},L|_{X^{o}})$ as the complement of the closure of 
$D\times (\PP^{1}\setminus \{0\})$ 
is itself a product log test configuration of $((X,D),L)$.  
Further, it is determined by the open product test configuration 
associated in that way. 
\end{Cor}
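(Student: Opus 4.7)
The plan is to extract a one-parameter subgroup from the open product structure, lift it to $\Aut^o((X,D),L)$ via Proposition~\ref{aut.prod2}, and identify the resulting candidate product log test configuration with $((\X,\mathcal{D}),\L)$ by gluing on a big open subset and extending across a codimension-two locus.

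By hypothesis, the open complement $\X\setminus\overline{D\times(\PP^1\setminus\{0\})}$ is a product test configuration of $(X^o, L|_{X^o})$, hence is $\G_m$-equivariantly isomorphic to $X^o\times\PP^1$ with action $s\cdot(x,t)=(\lambda^o(s)x,\,st)$ for some one-parameter subgroup $\lambda^o:\G_m\to\Aut^o(X^o,L|_{X^o})$. Since $K_X+D\equiv 0$, Proposition~\ref{aut.prod2} identifies $\Aut^o(X^o,L|_{X^o})$ with $\Aut^o((X,D),L)$, so $\lambda^o$ lifts uniquely to a one-parameter subgroup $\lambda:\G_m\to\Aut^o((X,D),L)$. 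From $\lambda$ I form the product log test configuration $((\X',\mathcal{D}'),\L'):=((X\times\PP^1,\, D\times\PP^1),\,p_1^*L)$ equipped with the twisted $\G_m$-action $s\cdot(x,t)=(\lambda(s)x,\,st)$; since $\lambda$ is determined by $\lambda^o$, this step already yields the second assertion of the corollary.

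Next I would construct a $\G_m$-equivariant isomorphism $\Phi:((\X',\mathcal{D}'),\L')\xrightarrow{\sim}((\X,\mathcal{D}),\L)$ over $\PP^1$ on the open set $U:=(\X'\setminus\X'_0)\cup(\X'\setminus\mathcal{D}')$. Over $\G_m\subset\PP^1$, the $\G_m$-action on $\X$ extends the $\lambda$-action on the generic fiber $\X_1\simeq X$ by construction of $\lambda$, so the assignment $(x,t)\mapsto t\cdot x$ produces a canonical equivariant isomorphism $X\times\G_m\simeq\X\setminus\X_0$ that matches the analogous trivialization of $\X'$. On $X^o\times\PP^1\subset\X'$ the product-open hypothesis directly supplies an equivariant isomorphism with $\X\setminus\overline{D\times(\PP^1\setminus\{0\})}$. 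Both partial maps are $\G_m$-equivariant and restrict to the identity on the fiber $X^o\times\{1\}$, so equivariance forces them to coincide on the overlap $X^o\times\G_m$, and they glue to a $\G_m$-equivariant open immersion $\Phi$ on $U$. The complement $\X'\setminus U=\Supp(D)\times\{0\}$ has codimension $\geq 2$ in the normal projective $\PP^1$-scheme $\X'$.

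To extend $\Phi$ across this codimension-two locus I would use relative ampleness of $\L'$ and the $S_2$ property: one has $H^0(\X',\L'^{\otimes n})=H^0(U,\L'^{\otimes n}|_U)$, and $\Phi$ identifies the latter with $H^0(\Phi(U),\L^{\otimes n}|_{\Phi(U)})=H^0(\X,\L^{\otimes n})$ provided $\X\setminus\Phi(U)$ is also of codimension $\geq 2$. This last point holds because the plt-type assumption forces $\X_0$ to be irreducible and normal with $X^o$ as a dense open subset, so $\X\setminus\Phi(U)$ is contained in $\overline{D\times\G_m}\cap\X_0$, a divisor on $\X_0$ and hence codimension two in $\X$. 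Performing $\Proj_{\PP^1}$ of these identified graded section algebras then yields the sought isomorphism $\X'\simeq\X$ over $\PP^1$ extending $\Phi$, while compatibility with $\mathcal{D}$ and the $\G_m$-action transports automatically from the dense open $U$. The main obstacle is the codimension-two estimate on the $\X$-side, where plt-type (normality and irreducibility of $\X_0$) is essential; any failure there would permit small modifications invisible to the $S_2$ argument.
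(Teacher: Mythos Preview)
Your proposal is correct and follows essentially the same route as the paper: lift the one-parameter subgroup via Proposition~\ref{aut.prod2} to obtain a candidate product log test configuration, then identify it with $((\X,\mathcal{D}),\mathcal{L})$ using that the two are isomorphic in codimension one with matching polarizations. The only difference is packaging: where you spell out the extension step by hand (codimension-two complements, $S_2$, and $\Proj$ of the identified section algebras), the paper simply observes that two polarized schemes isomorphic in codimension one with the same restricted polarization must be isomorphic and cites Matsusaka--Mumford \cite{MM} for this; your argument is effectively a direct proof of that citation in this setting.
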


\begin{proof}[proof of Corollary~\ref{prod}]
Thanks to Proposition~\ref{aut.prod2}, what remains is to show there is no 
plt-type testconfiguration $((\X',\mathcal{D}'),\mathcal{L}')$ 
which coincides outside ${\rm Supp}(\mathcal{D})$ and 
${\rm Supp}(\mathcal{D}')$ but it follows from 
the presence of polarizations 
since the two spaces are isomorphic in codimension $1$ 
with same restriction of the polarizations in the common open 
locus (cf., \cite{MM}). 
\end{proof}

\vspace{3mm}

Next we characterize the plt-type log test configuration with 
vanishing log Donaldson-Futaki invariant, which is one of 
the keys 
for our later analysis of the stability notion. 

\begin{Prop}[Test configuration as log canonical valuations]\label{tc.lc}
Plt-type test configuration $((\X,\mathcal{D}),\mathcal{L})$ 
for a polarized log Calabi-Yau variety $((X,D),L)$ 
has vanishing log Donaldson-Futaki invariant if and only if 
the discrepancy vanishes 
\begin{align}\label{ld.van}
a_{((X,D)\times \PP^{1})}(\X_{0})=0.
\end{align}
Note that this in particular implies 
the central fiber $\X_{0}$ is log canonical place 
of $((X\times \PP^1, D\times \PP^{1}+X\times \{0\})$ i.e., 
the log discrepancy over $((X\times \PP^1, D\times \PP^{1}+X\times \{0\})$ vanishes. 
Furthermore, such log test configuration is determined by 
the log canonical place as valuation $v_{\X_{0}}$. 
\end{Prop}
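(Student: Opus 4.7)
The plan is to collapse the log Donaldson--Futaki invariant to a single intersection number along the central fiber and then identify that number with a discrepancy over $(X,D)\times\PP^1$.

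I will start from the intersection-theoretic formula for $\mathrm{DF}$ of log pairs in \cite{OS}: after compactifying $\X\to\A^1$ to $\bar\X\to\PP^1$ by gluing in a trivial fiber at $\infty$, $\mathrm{DF}((\X,\mathcal{D}),\mathcal{L})$ is a positive constant times $(K_X+D)\cdot L^{n-1}\cdot \bar{\mathcal{L}}^{n+1}$ plus another positive constant times $(K_{\bar\X/\PP^1}+\bar{\mathcal{D}})\cdot \bar{\mathcal{L}}^n$. The log Calabi--Yau hypothesis $K_X+D\equiv 0$ annihilates the first term, so $\mathrm{DF}$ is proportional to $(K_{\bar\X/\PP^1}+\bar{\mathcal{D}})\cdot \bar{\mathcal{L}}^n$. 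Plt-ness of $(\X,\X_0)$ then forces the scheme-theoretic central fiber to be a prime divisor (the coefficient $\le 1$ requirement rules out multiplicity $\ge 2$, and adjunction together with the connectedness principle yields a single normal component), so $K_{\X/\PP^1}+\mathcal{D}$, being numerically trivial on general fibers and supported on $\X_0$, takes the shape
\[
K_{\X/\PP^1}+\mathcal{D}\equiv a\cdot \X_0
\]
for a unique $a\in\Q$. The projection formula gives $(K_{\bar\X/\PP^1}+\bar{\mathcal{D}})\cdot \bar{\mathcal{L}}^n = a\cdot (\mathcal{L}|_{\X_0})^n$, and ampleness of $\mathcal{L}|_{\X_0}$ makes the last factor strictly positive. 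Therefore $\mathrm{DF}=0$ if and only if $a=0$.

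To match $a$ with the discrepancy, I pass to a common log resolution $Y\to\X$ and $Y\to X\times\PP^1$ and pull back both $K_{\X/\PP^1}+\mathcal{D}\equiv a\X_0$ and $K_{X\times\PP^1/\PP^1}+D\times\PP^1\equiv 0$; reading off coefficients along the strict transform of $\X_0$ identifies $a$ with $a_{(X\times\PP^1,\,D\times\PP^1)}(\X_0)$. Since the scheme-theoretic fiber at $0$ is the reduced divisor $\X_0$, the pullback of $X\times\{0\}$ is $\X_0$ with multiplicity one, so
\[
a_{(X\times\PP^1,\,D\times\PP^1+X\times\{0\})}(\X_0)\;=\;a-1,
\]
and $a=0$ is exactly the statement that $\X_0$ is a log canonical place of $(X\times\PP^1,\,D\times\PP^1+X\times\{0\})$.

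For the uniqueness assertion I argue in reverse: given any divisorial log canonical place $v$ of $(X\times\PP^1,D\times\PP^1+X\times\{0\})$ centered over $X\times\{0\}$, I extract $v$ by a plt-blowup using the log MMP (\cite{BCHM}, cf.\ \cite{KM}); the resulting model is unique and recovers $(\X,\mathcal{D})$, while $\mathcal{L}$ is pinned down, up to pullback from $\PP^1$, by the requirement that it be relatively ample and extend $L$. The main obstacle I foresee is the clean bookkeeping in the discrepancy step---tracking the strict transform of $\X_0$ across the common log resolution and confirming reduced multiplicity of the fiber---together with verifying that an arbitrary divisorial log canonical place is \emph{dreamy} enough that the plt-extraction yields a projective model carrying a relatively ample polarization, which rests on the finite generation built into the plt assumption.
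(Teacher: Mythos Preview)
Your argument for the equivalence $\mathrm{DF}=0 \Leftrightarrow a_{((X,D)\times\PP^1)}(\X_0)=0$ is correct and is exactly the paper's approach: both invoke the intersection formula of \cite[Theorem~3.7]{OS}, and you simply spell out the collapse to $a\cdot(\mathcal{L}|_{\X_0})^n$ that the paper leaves implicit.

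For the uniqueness claim your route diverges from the paper's and carries a gap. The paper argues directly: two plt-type test configurations sharing the valuation $v_{\X_0}$ have the same generic fiber and the same irreducible central-fiber divisor, hence are isomorphic in codimension~$1$; then Matsusaka--Mumford \cite{MM} (exactly as in Corollary~\ref{prod}) upgrades this to an isomorphism using the polarizations, which agree on the common open locus. You instead propose to extract $v$ by a plt blow-up over $X\times\PP^1$ and declare that ``the resulting model is unique and recovers $(\X,\mathcal{D})$''. But a plt extraction over $X\times\PP^1$ yields a model whose central fiber has \emph{two} components---the extracted divisor and the strict transform of $X\times\{0\}$---not the test configuration $\X$. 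Passing to $\X$ requires contracting that strict transform, and this contraction (hence $\X$ itself, even before discussing $\mathcal{L}$) is only pinned down once a relatively ample class is chosen; different ample classes can lead to different small $\Q$-factorial models. So the uniqueness of $\X$ does not follow from the MMP extraction alone; it is the polarization that forces it, and the clean mechanism for that is precisely the \cite{MM} argument the paper uses. Once you insert that step, the plt-extraction becomes superfluous for \emph{uniqueness}---it is relevant only for an \emph{existence} statement, which the proposition does not assert, and which is also where your ``dreaminess'' concern properly belongs.
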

\begin{proof}
The former claim follows straightforward from 
the intersection number formula of (log) Donaldson-Futaki invariant 
\cite[Theorem 3.7]{OS}. The latter claim follows from 
that the plt type test configuration has irreducible 
central fiber so that $v_{\X_{0}}$ determines 
$\X$ up to isomorphism in codimension $1$ and the same argument 
as previous Corollary \ref{prod} (\cite{MM}) 
using the polarization. 
\end{proof}

The following supports the terminology. 

\begin{Prop}\label{open.st2}
Weakly open K-polystability (resp., weakly open K-stability) 
of polarized log Calabi-Yau pair $((X,D),L)$ only depends on 
the open locus 
$(X^{o},L^{o})$, not on the choice of compactification $((X,D),L).$

\end{Prop}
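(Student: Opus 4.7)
The plan is to reduce to Proposition~\ref{tc.lc}, which encodes plt-type log test configurations with vanishing log Donaldson-Futaki invariant as divisorial log canonical valuations on $K(X \times \PP^{1})$, and then to observe that the resulting valuation-theoretic data is intrinsic to $(X^{o},L^{o})$. Let $((X,D),L)$ and $((X',D'),L')$ be two polarized dlt log Calabi-Yau compactifications of the same $(X^{o},L^{o})$. Since $K(X)=K(X')=K(X^{o})$, divisorial valuations on $K(X\times\PP^{1})$ and on $K(X'\times\PP^{1})$ are canonically identified, and by Proposition~\ref{tc.lc}, a plt-type log test configuration of $((X,D),L)$ with vanishing log DF is uniquely determined by such a valuation $v$ satisfying
\[
a_{(X\times\PP^{1},\,D\times\PP^{1}+X\times\{0\})}(v)=0,
\]
together with the klt central fibre and finite-generation conditions.

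My main task is to verify that each of these conditions is independent of the compactification. For the log canonical place condition, I plan to use that $(X,D)$ and $(X',D')$ are crepant birational: on $X^{o}$ both pairs restrict to $(X^{o},0)$, so the agreement of log discrepancies is automatic for valuations centred on $X^{o}\times\{0\}$; globally, by Lemma~\ref{crep.class} together with the standard lc-place analysis for dlt log Calabi-Yau pairs (cf.\ \cite[\S2.3]{KM}, \cite{FjnM, Gona}), every prime component of $\lfloor D\rfloor$ is an lc place of $(X',D')$ and vice versa, so on a common log resolution $p: Y\to X$, $p': Y\to X'$ the pullbacks $p^{*}(K_{X}+D)$ and ${p'}^{*}(K_{X'}+D')$ coincide as $\Q$-divisors, whence log discrepancies agree on all divisorial valuations. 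The remaining conditions are similarly intrinsic: the klt property of $\X_{0}$ depends only on $v$ since $\X$ is determined up to codimension-$1$ isomorphism by $v$ (as in the proof of Corollary~\ref{prod} via \cite{MM}); the polarization $\mathcal{L}$ extending $L$ is unique by normality of the total space and agrees with the extension from the $(X',D')$ side on the shared open part; and a plt-type test configuration is product iff the restriction $v|_{K(X^{o})}$ is trivial, which is manifestly intrinsic to $(X^{o},L^{o})$.

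Combining these gives a canonical bijection between plt-type log test configurations of $((X,D),L)$ and of $((X',D'),L')$ with vanishing log DF, preserving product-ness; hence both weak open K-polystability and weak open K-stability depend only on $(X^{o},L^{o})$. The main obstacle I anticipate is the crepant-birational claim above: showing that two dlt log Calabi-Yau compactifications sharing $X^{o}$ induce identical log-discrepancy functions on all divisorial valuations of the common function field. Once this is in hand, the rest is essentially bookkeeping via Proposition~\ref{tc.lc} and the standard uniqueness of normal polarized test configurations in codimension $1$.
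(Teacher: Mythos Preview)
Your valuation-theoretic reduction is the right instinct, and the crepant-birationality of the two compactifications does ensure that the log-canonical-place condition on a divisorial valuation $v$ is intrinsic. The gap is in the next step. A plt-type test configuration of $((X,D),L)$ and one of $((X',D'),L')$ have \emph{different generic fibres} ($X$ versus $X'$) and hence genuinely different total spaces; they are not isomorphic in codimension~$1$ to each other, so the \cite{MM}-type uniqueness you invoke does not yield a bijection. Concretely, knowing that $v$ is realised by a plt-type test configuration $\X$ with $\mathcal{L}$ ample extending $L$ does not automatically produce a model $\X'$ with generic fibre $X'$, central fibre realising $v$, $(\X',\X'_0)$ plt, and an ample extension of $L'$. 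The paper's proof supplies exactly this missing construction: starting from a destabilising $((\X_2,\mathcal{D}_2),\mathcal{L}_2)$ for the second compactification, it glues its open part with the trivial family of $(X_1,D_1)$, compactifies via relative MMP over $X_1\times\PP^1$ to a $\Q$-factorial dlt model, then runs a second MMP over $\PP^1$ with boundary $F-\epsilon g^{-1}_*(\X_2)_0$ to force the central fibre to be irreducible and carry the correct valuation, and finally takes an lc model with respect to a general section of $|mL_1|$ (plus the negativity lemma) to achieve ampleness. Only after this does Proposition~\ref{tc.lc} certify vanishing DF and plt-ness on the $X_1$-side.

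A secondary point: your criterion ``product iff $v|_{K(X^{o})}$ is trivial'' is not correct. For a non-trivial one-parameter subgroup $\lambda\subset\Aut((X,D),L)$, the associated product test configuration has $v|_{K(X)}$ equal to the $\lambda$-weight function, which is non-trivial; your condition detects only the \emph{trivial} test configuration. Product-ness is instead characterised by $v$ arising from some $\lambda\subset\Aut^{o}(X^{o},L^{o})$, and the paper uses Proposition~\ref{aut.prod2} together with \cite{MM} to transport this between compactifications once the MMP construction above is in hand.
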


\begin{proof}
For $X^o$, suppose there is a triple $((X_1,D_1),L_1)$ which satisfies the 
weak open K-polystability condition as Definition \ref{ops}. 
It suffices to show the following; take another arbitrary polarized dlt Calabi-Yau compactification $((X_2,D_2),L_2)$ 
and we prove it also satisfies the condition of \ref{ops}. 
Consider a log test configuration of $((X_2,D_2),L_2)$ of plt type, with vanishing log Donaldson-Futaki invariant as \ref{ops}, 
whose total space is denoted as $((\X_2,\mathcal{D}_2),\mathcal{L}_2)$ 
and its open subset $\X_{2}^{o}:=\X_2\setminus {\rm Supp}\lfloor \mathcal{D}_2 \rfloor$. 
Take an a priori smaller open subset $\X_{2}^{oo} (\X_{2}^{o})$ from which 
the birational map to $X^o \times \PP^1$ is a morphism. 
We take $(\X'_{1},\mathcal{D}'_1):= ((X_1,D_1)\times (\PP^1\setminus \{0\}))\cup \X_2^{oo}$ 
on which there is also a naturally glued polarization $\mathcal{L}'_{1}$. 
We compactify the triple $((\X'_{1},\mathcal{D}'_1),\mathcal{L}'_{1})$ 
to a log test configuration $((\X'',\mathcal{D}''_1),\mathcal{L}''_{1})$ 
so that $(\X''_1,\mathcal{D}''_1)$ is $\mathbb{Q}$-factorial dlt pair 
with birational {\it morphism} $f$ to $X_1\times \mathbb{P}^1$, by the use of relative minimal model program 
over $X_1\times \mathbb{P}^1$. 
By applying Lemma \ref{tc.lc} to $\X_2$, and combining with the log canonicity of $((X_1 \times \PP^1,D_1 \times \PP^1 +X_1\times \{0\})$, 
we see that there is a $\Q$-divisor $F$ supported on the central fiber $(\X''_1)|_0$ such that 
$(\X''_1,F)$ is log crepant to $((X_1 \times \PP^1,D_1 \times \PP^1 +X_1\times \{0\})$. 
We denote a birational map $\X''_1 \dashrightarrow \X_2$ as $g$. 
We take a small enough $0<\epsilon\ll 1$ 
and run again the relative minimal model  program, this time over $\mathbb{P}^1$, from 
$(\X''_1,F-\epsilon g^{-1}_* (\X_2)|_0)$. Then we obtain a 
dlt minimal model $(\X'''_1, \mathcal{D}'''_1 + (\X'''_1)|_{0})$ 
with generic fiber $(X_1,D_1)$. We take a general relative section of 
$|m L_1 \times \mathbb{P}^1|$ and its closure in $\X'''_1$. Take lc model of 
$(\X'''_1, \mathcal{D}'''_1+\epsilon' A)$ for $0<\epsilon'\ll 1$, then 
by the negativity lemma (cf., e.g., \cite[3.39]{KM}), 
we obtain 
the desired log test configuration $(\X_1, \mathcal{D}_1)$ with the polarization 
extending that of $L_1 \times  (\mathbb{P}^1\setminus \{0\})$. 
Its log Donaldson-Futaki invariant vanishes due to Lemma \ref{tc.lc}, 
hence by the weak open K-polystability (with respect to $((X_1,D_1),L_1)$) assumption we see that this is obtained by 
one parameter subgroup of the automorphism group discussed in Proposition \ref{aut.prod2}. 
By the same proposition, this also provides log product test configuration of $((X_2,D_2),L_2)$ 
which has small birational map to $((\X_2,\mathcal{D}_2),\mathcal{L}_2)$. Applying 
\cite{MM}, we obtain it is isomorphism. Hence we conclude the proof. 
\end{proof}

From the above lemma \ref{tc.lc}, the theory of 
log canonical centers (\cite[4.7, 4.8]{Amb}, \cite[\S 9]{Fjn}) 
play an essential role for this weakly open K-polystability notion. 
See the arguments in the next section \S\ref{ex.sec}. 

The following is motivated by the Shokurov connectedness principle 
and the above Proposition \ref{tc.lc}. 
Take an arbitrary polarized (dlt) log Calabi-Yau pair 
$((X,D),L)$ (Definition \ref{CY.notation} \eqref{notation1}). 
For two log test configurations of plt-type 
$((\X_{i},\mathcal{D}_{i}),\mathcal{L}_{i}) (i=1,2)$, we call one is 
{\it elementary transform} of the other if 
there is a test configuration of $X$ which 
is the blow up of a log canonical center (with reduced structure)
of $(\X_{i},\mathcal{D}_{i}+\X_{i,0})$ for both $i$. 

\begin{Prob}[Connectedness]
For any polarized (dlt) log Calabi-Yau pair 
$((X,D),L)$ and any two log test configurations of plt-type 
$((\X_{i},\mathcal{D}_{i}),\mathcal{L}_{i}) (i=1,2)$ 
with ${\rm DF}((\X_{i},\mathcal{D}_{i}),\mathcal{L}_{i})=0$ for 
$i=1,2$, is there always a finite sequence of such 
plt type log test configurations 
$((\X'_{i},\mathcal{D}'_{i}),\mathcal{L}'_{i}) (i=1,2,\cdots,m)$ 
such that the following holds? 

$(\X'_{i},\mathcal{D}'_{i})$ and $(\X'_{i+1},\mathcal{D}'_{i+1})$ 
are elementary transforms of each other for $i=1,\cdots,m-1$, 
with 
$$(\X_{1},\mathcal{D}_{1})=(\X'_{1},\mathcal{D}'_{1}),$$
$$(\X_{2},\mathcal{D}_{2})=(\X'_{m},\mathcal{D}'_{m}).$$
\end{Prob}

\vspace{5mm}
Finally, as example, we give a systematic construction of 
plt-type test configurations which we use in the next section. 

\begin{Lem}\label{contract}
If $((X,D),L)$ is weakly open K-polystable and 
some irreducible component $D_{i}$ of $\lfloor D\rfloor $ 
satisfies that 
\begin{itemize}
\item 
$L|_{X\setminus D_{i}}\sim_{\Q} \mathcal{O}$ 
and 
\item $D_{i}$ is ample (e.g. when 
$\rho(X)=1$),
\end{itemize}
 then there is a 
 plt-type log test configuration of $((X,D),L)$ such that the 
 central fiber is the projective cone of 
 $((D_{i},\cup_{j\neq i}D_{j}\cap D_{i}),N_{D_{i}/X})$. 
 \end{Lem}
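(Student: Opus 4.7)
The approach is the deformation to the normal cone. Since $D_i$ is irreducible, ample and $L|_{X\setminus D_i}\sim_{\Q}\mathcal{O}$, we have $L\sim_{\Q}cD_i$ for some $c\in\Q_{>0}$ (positivity comes from the ampleness of both $L$ and $D_i$). The plan is to take $\pi:\widetilde{\X}\to X\times\PP^1$ to be the blow-up along $D_i\times\{0\}$, with exceptional divisor $E\cong\PP(N_{D_i/X}\oplus\mathcal{O}_{D_i})$, and to use $\mathcal{L}:=\pi^{*}p_1^{*}L-cE$ as the candidate polarization, where $p_1:X\times\PP^1\to X$ is the first projection.

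First I would check that $\mathcal{L}$ is semi-ample over $\PP^1$. Restricted to the strict transform $\widetilde{X\times\{0\}}\cong X$ it equals $L-cD_i\sim_{\Q}0$; restricted to $E$, using $E|_E=-\xi$ with $\xi:=\mathcal{O}_E(1)$, it equals $c(\xi+\pi^{*}N_{D_i/X})$, which is ample by the standard relative-plus-base ampleness argument. The relative base-point-free theorem then produces the contraction $\widetilde{\X}\to\X$ over $\PP^1$. This contraction collapses $\widetilde{X\times\{0\}}$---and therefore also its intersection $\sigma_0:=\widetilde{X\times\{0\}}\cap E$ with the exceptional divisor---to a single point, while mapping $E$ onto the projective cone $C_p(D_i,N_{D_i/X})$ via the standard embedding defined by $\xi+\pi^{*}N_{D_i/X}$. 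Hence $\X_0\cong C_p(D_i,N_{D_i/X})$. Setting $\mathcal{D}$ to be the push-down of the strict transform of $D\times\PP^1$ then identifies $(\X_0,\mathcal{D}_0)$ with the projective cone of $((D_i,\cup_{j\neq i}D_j\cap D_i),N_{D_i/X})$: $\widetilde{D_i\times\PP^1}\cap E=\sigma_\infty$ supplies the section at infinity, while each $\widetilde{D_j\times\PP^1}\cap E$ yields the cone over $D_i\cap D_j$.

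For the plt-type condition, a direct codimension-$2$ blow-up computation shows that $E$ is a log canonical place of $(X\times\PP^1,\,D\times\PP^1+X\times\{0\})$: the discrepancy is $(2-1)-1-1=-1$, since $D_i\times\PP^1$ and $X\times\{0\}$ each contain the center with multiplicity $1$ and carry coefficient $1$, while the remaining boundary components do not pass through $D_i\times\{0\}$. Since $v_{\X_0}$ agrees with $v_E$ on the common function field, and subtracting the effective $\mathcal{D}$ from $\X_0+\mathcal{D}$ strictly improves discrepancies away from $\X_0$, a standard inversion-of-adjunction argument then gives $(\X,\X_0)$ plt. The main obstacle I expect lies precisely here, in the klt check for $\X_0=C_p(D_i,N_{D_i/X})$ itself: by the cone-discrepancy formula it reduces to $-K_{D_i}\sim_{\Q}rN_{D_i/X}$ with some $r>0$, which via $K_X+D\equiv0$ and adjunction amounts to $\Q$-proportionality of $(D-D_i)|_{D_i}$ and $D_i|_{D_i}=N_{D_i/X}$ as classes on $D_i$---automatic under $\rho(X)=1$ (the example highlighted in the statement), and accessible in general from the interplay of $L\sim_{\Q}cD_i$ with the log Calabi-Yau structure.
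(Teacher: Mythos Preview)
Your approach is the same as the paper's: blow up $D_i\times\{0\}$ and contract via $\pi^*p_1^*L-cE$. The paper is terser, simply noting that $b^*(L\times\A^1)(-cE)$ ``gives a contraction'' and that this is ``a process of the usual minimal model program which preserves dlt condition'', then inferring plt from dlt plus irreducibility of $\X_0$. Your more explicit route is fine, but there is one genuine slip.

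You claim $\mathcal{L}|_E=c(\xi+\pi^*N_{D_i/X})$ is \emph{ample}. It is not: in your conventions ($E|_E=-\xi$), the section $\sigma_0=E\cap\widetilde{X\times\{0\}}$ is $\PP_{\mathrm{sub}}(N_{D_i/X})\subset\PP(N_{D_i/X}\oplus\mathcal O)$, so $\xi|_{\sigma_0}=N_{D_i/X}^{-1}$ and hence $(\xi+\pi^*N_{D_i/X})|_{\sigma_0}$ is trivial. Equivalently, $\mathcal{L}|_E=c\,\sigma_\infty$, which is nef and big but has $\sigma_\infty\cdot\sigma_0=0$. This is exactly what you need---the induced map contracts $\sigma_0$ to the cone vertex---but it directly contradicts ampleness, and your argument as written is internally inconsistent (an ample $\mathcal{L}|_E$ would embed $E$, yet you also assert $\sigma_0\subset E$ collapses to a point). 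Replace ``ample'' by ``semi-ample, contracting precisely $\sigma_0$'' and the paragraph works.

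Your closing remark on the klt check at the cone vertex is a point the paper does not make explicit; the paper absorbs it into the dlt-preservation of the (slightly perturbed) MMP step. Your reduction via adjunction to $(\sum_{j\ne i}D_j)|_{D_i}\sim_{\Q} r\,D_i|_{D_i}$ with $r>0$ is the right content, and is immediate when $\rho(X)=1$.
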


\begin{proof}
We first blow up $D_{i}\times \{0\}\subset X\times \A^{1}$ 
to obtain 
$b \colon \mathcal{B}\to (X\times \A^{1})$ 
whose central fiber is $b^{-1}_{*}X \cup \PP(N_{D_{i}/X}\oplus 
\mathcal{O}_{D_{i}})$. The intersection is still 
$b^{-1}_{*}X \cap \PP(N_{D_{i}/X}\oplus 
\mathcal{O})\simeq D_{i}$. As the component 
$\PP(N_{D_{i}/X}\oplus 
\mathcal{O})$ is exceptional divisor, we simply denote it as $E$. 
Then from the assumption it is easy to see that 
$b^{*}(L\times \A^{1})(-cE)$ gives a contraction to 
a polarized test configuration $\X$ whose central fiber 
$\X_{0}$ is the contraction of $\PP(N_{D_{i}/X}\oplus 
\mathcal{O})$ along the $0$-section, i.e., 
the projective cone of $((D_{i},\cup_{j\neq i}D_{j}\cap D_{i}),N_{D_{i}/X})$ 
which is a process of the usual 
minimal model program which preserves dlt condition (cf., \cite{KM}).  Since the fiber $\X_{0}$ is irreducible, dlt condition implies 
that the obtained polarized family is a plt-type log test configuration. 
\end{proof}


Now we are ready to define open K-polystability and 
its further strengthening, following 
Definition~\ref{ops}. 

From here, 
a main idea could be expressed as to study the structure of 
the ends of the complete metrics to expect, 
``virtually'' considering a 
compact (non-canonical) model which ``close the ends'' 
with small angles, which we analyze by algebro-geometric tools.  

\begin{Def}[Open polystability]\label{ops2}
$(X^{o},L^{o})$ is said to be {\it open K-polystable} 
if and only if there is a polarized dlt log Calabi-Yau pair 
compactification $(X^{o},L^{o})\subset ((X,D),L)$ 
in the sense of Definition~\ref{CY.notation} 
which satisfies the followings; 
\begin{enumerate}
\item 
$\lfloor D \rfloor (\equiv -K_{(X,\{D\})}:=-(K_{X}+\{D\}))$ is nef, 
\item 
there is a constant $\epsilon\in (0,1)$ 
such that 
for any $\beta \in [0,\epsilon)$, the log Donaldson-Futaki invariant 
(\cite{Don11, OS}) is non-negative: 

\begin{align}
{\rm DF}((\mathcal{X},\{\mathcal{D}\}+(1-\beta)\lfloor\mathcal{D}\rfloor),\mathcal{L}\bigl(\frac{c}{\beta}\lfloor\mathcal{D}\rfloor\bigr)))\ge 0
\end{align}
for {\it plt-type} log test configuration 
$$((\mathcal{X},\{\mathcal{D}\}+(1-\beta)\lfloor\mathcal{D}\rfloor),\mathcal{L}\bigl(\frac{c}{\beta}\lfloor\mathcal{D}\rfloor\bigr))$$ 
in the sense of Definition
~\ref{ops} 
of $(X,\{D\}+(1-\beta)\lfloor D\rfloor)$ 
and the value attains $0$ if and 
only if $((\mathcal{X},\{\mathcal{D}\}+(1-\beta)\lfloor\mathcal{D}\rfloor),\mathcal{L}(\frac{c}{\beta}\lfloor\mathcal{D}\rfloor))$ is a product log test configuration. 
\end{enumerate}
We call such a compactification itself 
$(X^{o},L^{o})\subset ((X,D),L)$ {\it stabilizing}, 
or simply open K-polystable. 

In many concrete situations, we only consider the cases when $D$ is integral, 
i.e., $D=\lfloor D\rfloor$ so that the description is simpler. 
\end{Def}

For justification of 
the variation $\frac{c}{\beta}\lfloor \mathcal{D} \rfloor$ of polarization in the above definition, 
see the elliptic type spherical (real) surfaces for instance. 
From the definition, the lemma below immediately follows. 

\begin{Lem} 
Suppose $(X^{o},L^{o})$ is open K-polystable and  
take a stabilizing polarized dlt log Calabi-Yau pair 
compactification $(X^{o},L^{o})\subset ((X,D),L)$. Then, 
it implies: 
\begin{enumerate}
\item $(X^{o},L^{o})$ is weakly open K-polystable 
\item \label{balancing}
if further $L=\lfloor D\rfloor$, 
there is a polarized dlt Calabi-Yau pair compactification 
$((X,D),L)$ such that 
for any log product log test configuration 
$((\mathcal{X},\mathcal{D}),\mathcal{L})$ 
satisfies $(\lfloor \mathcal{D}\rfloor)^{\cdot (n+1)}=0.$ 
\end{enumerate}
We call a compactification 
$((X,D),L)$ of $(X,L)$ satisfying the latter condition 
{\it balancing}. 
\end{Lem}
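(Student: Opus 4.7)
The plan is to prove (1) by leveraging open K-polystability on a perturbed plt-type test configuration sharing the original total space and then deducing the product structure via the equality case, and to prove (2) by a direct calculation on the product total space.

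For (1), take any plt-type log test configuration $((\mathcal{X},\mathcal{D}),\mathcal{L})$ of $((X,D),L)$ with $\mathrm{DF}((\mathcal{X},\mathcal{D}),\mathcal{L}) = 0$. By Proposition~\ref{tc.lc}, $\mathcal{X}_0$ is a log canonical place of $(X\times\PP^1, D\times\PP^1 + X\times\{0\})$; together with $K_X + D \equiv 0$ this implies $E := K_{\bar{\mathcal{X}}/\PP^1}+\bar{\mathcal{D}}$ is $f$-exceptional, and by the intersection-number formula of \cite[Theorem~3.7]{OS} the vanishing $\mathrm{DF} = 0$ translates to $n\bar{\mathcal{L}}^{n+1} + (n+1)\bar{\mathcal{L}}^n\cdot E = 0$. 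Next I view $\mathcal{X}$ with modified boundary $\mathcal{D} - \beta\lfloor\mathcal{D}\rfloor$ and polarization $\mathcal{L}_\beta := \mathcal{L} + \tfrac{c}{\beta}\lfloor\mathcal{D}\rfloor$ as a plt-type log test configuration of the perturbed pair $((X, D - \beta\lfloor D\rfloor), L + \tfrac{c}{\beta}\lfloor D\rfloor)$ for $\beta \in (0,\epsilon)$; ampleness of $L + \tfrac{c}{\beta}\lfloor D\rfloor$ follows from the nefness of $\lfloor D\rfloor$, and the plt-type property is preserved since $(\mathcal{X}, \mathcal{X}_0)$ is unchanged. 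By the open K-polystability hypothesis applied to the stabilizing compactification, $\mathrm{DF}_\beta \geq 0$ on $(0,\epsilon)$ with equality if and only if the configuration is a product.

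Expanding $\mathrm{DF}_\beta$ via the intersection formula with the substitutions $\bar{\mathcal{L}}_\beta = \bar{\mathcal{L}} + \tfrac{c}{\beta}\lfloor\bar{\mathcal{D}}\rfloor$ and $K_{\bar{\mathcal{X}}/\PP^1}+\bar{\mathcal{D}}-\beta\lfloor\bar{\mathcal{D}}\rfloor = E - \beta\lfloor\bar{\mathcal{D}}\rfloor$ yields a Laurent polynomial in $\beta$ for the numerator $(n+1)(L_\beta)^n\,\mathrm{DF}_\beta$, with coefficients that are mixed intersection numbers of $\bar{\mathcal{L}}$, $\lfloor\bar{\mathcal{D}}\rfloor$, and $E$ on $\bar{\mathcal{X}}$. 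The $\beta^0$-coefficient reduces, via the identity $n\bar{\mathcal{L}}^{n+1} + (n+1)\bar{\mathcal{L}}^n\cdot E = 0$, to a multiple of the original vanishing $\mathrm{DF} = 0$ and hence itself vanishes; non-negativity of $\mathrm{DF}_\beta$ on $(0,\epsilon)$ combined with this cancellation then propagates through the remaining orders to force $\mathrm{DF}_\beta \equiv 0$ there. The equality clause of open K-polystability then implies the perturbed configuration is a product log test configuration corresponding to a one-parameter subgroup of $\Aut^o((X, D - \beta\lfloor D\rfloor), L + \tfrac{c}{\beta}\lfloor D\rfloor)$; this group coincides with $\Aut^o((X,D),L)$ since the modification only rescales coefficients of already present divisors and shifts $L$ by an already present class (cf.\ Proposition~\ref{aut.prod2}), and Corollary~\ref{prod} then transfers the product structure back to the original $((\mathcal{X},\mathcal{D}),\mathcal{L})$, proving weak open K-polystability.

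For (2), take the stabilizing compactification with $L = \lfloor D\rfloor$, and let $((\mathcal{X},\mathcal{D}),\mathcal{L})$ be any log product log test configuration. By definition $\mathcal{X} \cong X \times \PP^1$ with $\G_m$-action induced by a one-parameter subgroup of $\Aut^o((X,D),L)$, and $\mathcal{D} = p_1^*D$ since $D$ is preserved by the action, hence $\lfloor\mathcal{D}\rfloor = p_1^*\lfloor D\rfloor$ where $p_1\colon X\times\PP^1 \to X$. By pullback compatibility, $(\lfloor\mathcal{D}\rfloor)^{n+1} = p_1^*(\lfloor D\rfloor^{n+1}) = 0$ since $\lfloor D\rfloor^{n+1}$ is a class of virtual codimension $n+1$ on the $n$-dimensional variety $X$.

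The main obstacle is the coefficient analysis in part (1): showing that the Laurent polynomial $(n+1)(L_\beta)^n\,\mathrm{DF}_\beta$, constrained by non-negativity on $(0,\epsilon)$ and the vanishing of its $\beta^0$-coefficient, must be identically zero there. This requires systematic control of the mixed intersection numbers $\bar{\mathcal{L}}^{n-k}\cdot\lfloor\bar{\mathcal{D}}\rfloor^{k+1}$ and $\bar{\mathcal{L}}^{n-k}\cdot\lfloor\bar{\mathcal{D}}\rfloor^k\cdot E$ across orders in $\beta$, exploiting the nefness of $\lfloor D\rfloor$ and the plt-type structure of the degeneration.
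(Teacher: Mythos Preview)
The paper gives no proof of this lemma beyond the sentence ``From the definition, the lemma below immediately follows.'' The intended reading is that Definition~\ref{ops2} itself includes the endpoint $\beta=0$ in the range $[0,\epsilon)$, and at $\beta=0$ the boundary $\{D\}+(1-\beta)\lfloor D\rfloor$ becomes $D$ while the condition on plt-type test configurations reduces verbatim to that of Definition~\ref{ops}. So part (i) is meant to be \emph{by definition}, not by a limiting argument from $\beta>0$. (The polarization shift $\frac{c}{\beta}\lfloor D\rfloor$ is admittedly awkward at $\beta=0$; the author is implicitly treating it as absent there, which is the only sensible reading.) Your part~(ii) argument is correct and is what ``immediately follows'' means here: on a product log test configuration $\bar{\mathcal{X}}=X\times\PP^1$ one has $\lfloor\bar{\mathcal{D}}\rfloor=p_1^*\lfloor D\rfloor$, whence $(\lfloor\bar{\mathcal{D}}\rfloor)^{n+1}=p_1^*(\lfloor D\rfloor^{n+1})=0$ for dimension reasons---indeed this does not even use $L=\lfloor D\rfloor$ or the stabilizing hypothesis.

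Your approach to part (i), by contrast, attempts something strictly harder than what the paper asserts: you try to deduce the $\beta=0$ conclusion purely from the $\beta\in(0,\epsilon)$ inequalities. This route has a genuine gap that you correctly flag but do not close. The claim that a Laurent polynomial in $\beta$ which is non-negative on $(0,\epsilon)$ and has vanishing $\beta^0$-coefficient must be identically zero is simply false: $\beta^{-1}$, or $\beta^{-2}+\beta$, are counterexamples. To push your argument through you would need sign or vanishing constraints on \emph{each} of the mixed intersection numbers $\bar{\mathcal{L}}^{n-k}\cdot\lfloor\bar{\mathcal{D}}\rfloor^{k+1}$ and $\bar{\mathcal{L}}^{n-k}\cdot\lfloor\bar{\mathcal{D}}\rfloor^{k}\cdot E$ individually, and nefness of $\lfloor D\rfloor$ alone does not supply these (for instance it gives no control on the terms involving $E$). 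Unless you can exhibit such constraints, the Laurent-polynomial route does not yield the conclusion, and you should instead invoke the $\beta=0$ case of Definition~\ref{ops2} directly, as the paper does.
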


The last variant definition is: 

\begin{Def}[Strong open polystability]\label{ops3}
In the setting of above Definition~\ref{ops2}, 
$(X^{o},L^{o})$ is called {\it strongly open K-polystable} 
if the chosen compactification $((X,D),L)$ further satisfies that 
\begin{enumerate}
\item 
$\lfloor D \rfloor (\equiv -K_{(X,\{D\})}:=-K_{X}-\{D\})$ is nef and 
\item 
for some fixed positive constant $c>0$, 
$$((X,\{D\}+(1-\beta)\lfloor D\rfloor,L\bigl(\frac{c}{\beta}\lfloor D\rfloor\bigr))$$ 
is log K-polystable and any 
$0<\beta\ll 1$. 
\end{enumerate}
We call such compactification 
$((X,D),L)$ in the either way: {\it strongly stabilizing} compactification, 
stable compactification, or simply, 
being strongly open K-polystable. 
\end{Def}
Then it follows straightforward from the definitions that: 
\begin{Prop}
Strongly open K-polystable $(X^{o},L^{o})$ 
is open K-polystable. 
\end{Prop}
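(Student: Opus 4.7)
The plan is to verify that any compactification $(X^{o},L^{o}) \subset ((X,D),L)$ witnessing strong open K-polystability simultaneously witnesses open K-polystability, by matching the two items of Definition \ref{ops2} against the two items of Definition \ref{ops3}. So I would simply take the same compactification and argue nothing else is needed.

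First I would observe that the nef condition on $\lfloor D \rfloor$ is shared verbatim between Definitions \ref{ops2} and \ref{ops3}, so that item is immediate. For the Donaldson-Futaki condition I would take the constant $c > 0$ and the upper bound $\epsilon > 0$ on $\beta$ furnished by Definition \ref{ops3}, so that for every $\beta \in (0,\epsilon)$ the polarized log pair
\begin{equation*}
\bigl((X, \{D\} + (1-\beta)\lfloor D \rfloor),\, L\bigl(\tfrac{c}{\beta}\lfloor D \rfloor\bigr)\bigr)
\end{equation*}
is log K-polystable in the sense of \cite{Don11, OS}. By the standard definition of log K-polystability, this already means ${\rm DF} \ge 0$ on \emph{every} log test configuration of this polarized pair, with equality characterizing product log test configurations. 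Plt-type log test configurations (Definition \ref{ops} (\ref{pltt})) form a subclass of all log test configurations of the same polarized pair, so restricting the inequality and the equality-characterization to the plt-type subclass yields exactly the Donaldson-Futaki hypothesis demanded in Definition \ref{ops2}.

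The main obstacle, which is genuinely minor, is bookkeeping around the endpoint $\beta = 0$: the interval $[0,\epsilon)$ appearing in Definition \ref{ops2} must be read as $(0,\epsilon)$, since the twist $\tfrac{c}{\beta}\lfloor\mathcal{D}\rfloor$ is undefined at $\beta=0$; with this convention the same $\epsilon$ (and $c$) furnished by strong open K-polystability also serves Definition \ref{ops2}. No minimal model program argument, birational modification, or further analysis of the centers $v_{\mathcal{X}_0}$ (as in Proposition \ref{tc.lc}) is required for the implication — the content of the proposition is purely that log K-polystability of the perturbed compactification is a stronger test than its restriction to the plt-type subclass.
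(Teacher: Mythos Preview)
Your proposal is correct and matches the paper's approach: the paper simply asserts that the proposition ``follows straightforward from the definitions'' with no further argument, and you have spelled out precisely that verification. Your observation about the endpoint $\beta=0$ is a fair remark on a minor imprecision in the paper's Definition~\ref{ops2}, but it does not affect the implication.
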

On the other hand, \cite{LX} immediately implies the following. 
\begin{Prop}
In the above situation as \ref{ops3}, suppose that open K-polystable 
$(X^{o},L^{o})$ has a stabilizing compactification 
$((X,D),L)$ which is anti-log-canonically polarized (see \cite{CR} for 
related notion) in the sense that there is 
a small enough positive real number $\beta\ll 1$ such that 
$L$ is proportional to $-(K_{X}+\{D\}+(1-\beta)\lfloor D \rfloor).$ 
Then, it is strongly stabilizing compactification, 
so that $(X^{o},L^{o})$ is strongly open K-polystable in particular. 
\end{Prop}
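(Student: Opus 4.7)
The plan is to apply the special test configuration reduction of \cite{LX}, in its logarithmic extension, to the anti-log-canonically polarized klt log Fano pair $((X,\{D\}+(1-\beta)\lfloor D\rfloor),\ L(\frac{c}{\beta}\lfloor D\rfloor))$ for each small $\beta\in(0,\epsilon)$. First I would verify that this perturbed pair really is a klt log Fano pair in the sense required by Li--Xu: since $K_{X}+D\equiv 0$ one has $-(K_{X}+\{D\}+(1-\beta)\lfloor D\rfloor)\equiv \beta\lfloor D\rfloor$, and the anti-log-canonical polarization hypothesis $L\sim_{\Q}-(K_{X}+\{D\}+(1-\beta_{0})\lfloor D\rfloor)$ forces $L\sim_{\Q}c_{0}\lfloor D\rfloor$ for some $c_{0}>0$; then $L(\frac{c}{\beta}\lfloor D\rfloor)$ is proportional to $\lfloor D\rfloor$ and hence to $-(K_{X}+\{D\}+(1-\beta)\lfloor D\rfloor)$. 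The klt property of $(X,\{D\}+(1-\beta)\lfloor D\rfloor)$ is inherited from the dlt-ness of $(X,D)$ by reducing the coefficients of the integer part.

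Next I would invoke the log MMP reduction step: for any normal log test configuration of a klt log Fano pair whose polarization is a positive multiple of the anti-log-canonical class, running a relative minimal model program over $\mathbb{P}^{1}$ together with the ensuing anti-canonical contraction produces a plt-type log test configuration in the sense of Definition \ref{ops} \eqref{pltt}, whose log Donaldson--Futaki invariant is no larger than that of the original test configuration, and is strictly smaller unless the starting test configuration is already of plt-type up to isomorphism in codimension one. This is exactly the Li--Xu theorem \cite{LX} combined with its boundary-divisor extension in the subsequent log K-stability literature, and fits into the viewpoint on plt-type test configurations recorded in Proposition \ref{tc.lc}.

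Now the hypothesis that $(X^{o},L^{o})$ is open K-polystable via the chosen compactification $((X,D),L)$ provides, for every $\beta\in(0,\epsilon)$, non-negativity of the log Donaldson--Futaki invariant on all plt-type log test configurations of the perturbed pair $((X,\{D\}+(1-\beta)\lfloor D\rfloor),\ L(\frac{c}{\beta}\lfloor D\rfloor))$, with equality only for product test configurations. Combining this with the MMP reduction above, non-negativity propagates to \emph{all} log test configurations of the perturbed pair with equality characterising products; this is precisely log K-polystability in the sense of \cite{Don11,OS}. Since it holds for every sufficiently small $\beta>0$, $((X,D),L)$ is a strongly stabilizing compactification per Definition \ref{ops3}, and so $(X^{o},L^{o})$ is strongly open K-polystable.

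The main obstacle I expect is cleanly importing the logarithmic analogue of \cite{LX}: the original Li--Xu argument is written for the anti-canonical polarization on a klt Fano variety, so one must check that each MMP step preserves the plt-type condition while carrying along the boundary $\{\mathcal{D}\}+(1-\beta)\lfloor \mathcal{D}\rfloor$, and that the rescaling $\frac{c}{\beta}\lfloor\mathcal{D}\rfloor$ of the polarization does not spoil the strict decrease of DF in the non-plt case. Securing these two points uniformly for $\beta$ in a fixed small interval $(0,\epsilon)$, so that Definition \ref{ops3} is actually met, is the place where I would spend most of the technical effort.
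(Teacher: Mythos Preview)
Your proposal is correct and follows exactly the approach of the paper, which simply states that the proposition ``immediately follows from \cite{LX}''; you have spelled out the verification that the perturbed pair is a klt log Fano with anti-log-canonical polarization and then invoked the Li--Xu reduction to special (plt-type) test configurations, which is precisely the intended argument. The technical concern you flag about the logarithmic extension of \cite{LX} is the only substantive point, and it is indeed standard in the subsequent log K-stability literature.
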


We add a useful remark, in the spirit of \cite{LiSu}. 
\begin{Lem}\label{interm}
In the above situation as \ref{ops3}, if $(X,L(t\lfloor D \rfloor))$ is K-polystable for any $t\gg 1$, 
then $(X^{o},L^{o})$ is strongly open K-polystable. 
\end{Lem}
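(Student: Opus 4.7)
I propose to take the compactification $((X,D),L)$ given by hypothesis as itself the strongly stabilizing compactification. Condition~(i) of Definition~\ref{ops3} (nefness of $\lfloor D\rfloor$) is forced by ampleness of $L(t\lfloor D\rfloor)$ for $t\gg 1$, which is implicit in the K-polystability hypothesis. The substance is to verify condition~(ii): log K-polystability of $((X,\{D\}+(1-\beta)\lfloor D\rfloor),\mathcal{L}_t)$, where $\mathcal{L}_t := L(\tfrac{c}{\beta}\lfloor D\rfloor)$ and $t := c/\beta$, for some $c>0$ and all sufficiently small $\beta>0$.

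The key step is a linearity identity for the log Donaldson-Futaki invariant in the boundary. Writing $D_\beta = (1-\beta)D+\beta\{D\}$, the intersection-number expression of \cite[Thm.~3.7]{OS} yields, for any normal log test configuration $((\X,\mathcal{D}_\beta),\mathcal{L}_t)$,
\[
{\rm DF}\bigl((\X,\mathcal{D}_\beta),\mathcal{L}_t\bigr) = (1-\beta)\,{\rm DF}\bigl((\X,\mathcal{D}),\mathcal{L}_t\bigr) + \beta\,{\rm DF}\bigl((\X,\{\mathcal{D}\}),\mathcal{L}_t\bigr).
\]
The first summand is non-negative by log K-semistability of the log Calabi-Yau pair $((X,D),L_t)$ (Theorem~\ref{OSrev}). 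In the main case $\{D\}=0$ emphasized in the introduction, the second summand equals $\beta\,{\rm DF}(\X,\mathcal{L}_t)\ge 0$ by the assumed K-polystability of $(X,L_t)$ for $t\gg 1$; for $\{D\}\neq 0$, one handles the second summand asymptotically by exploiting that $\mathcal{L}_t/t\to\lfloor\mathcal{D}\rfloor$ as $t\to\infty$, reducing to the log Calabi-Yau regime. Hence ${\rm DF}_{\log}\ge 0$, giving log K-semistability.

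For the polystability (equality) analysis, vanishing of the log DF forces both summands above to vanish. In particular ${\rm DF}(\X,\mathcal{L}_t)=0$, so $(\X,\mathcal{L}_t)$ is a product test configuration by K-polystability of $(X,L_t)$, induced by a one-parameter subgroup $\sigma\subset{\rm Aut}^o(X,L_t)$. Proposition~\ref{aut.prod2}, which identifies ${\rm Aut}^o((X,D),L)$ with ${\rm Aut}^o(X^o,L^o)$ in the log Calabi-Yau setting, is then invoked to conclude that $\sigma$ preserves $D$; combined with Corollary~\ref{prod}, the log test configuration itself is a product log test configuration, yielding the polystability condition.

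The most delicate point, which I expect to be the main obstacle, is the equality-case analysis just sketched: a priori the ordinary K-polystability of $(X,L_t)$ only provides a product structure on the underlying variety, and excluding the pathology of a $\mathbb{G}_m$-action preserving $L_t$ but moving $D$ within its linear system requires careful use of Proposition~\ref{aut.prod2}. The secondary difficulty is the fractional case $\{D\}\neq 0$, which requires uniform asymptotic control on the log Futaki correction as $t\to\infty$ to avoid a direct application of log K-semistability of the weakly log Fano pair $(X,\{D\})$, which is not automatic.
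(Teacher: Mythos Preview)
Your approach---interpolating the log Donaldson--Futaki invariant linearly in the boundary coefficient, exactly as in \cite{LiSu} and \cite[3.7]{OS}---is precisely the paper's one-line argument. The semistability direction is fine.

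The gap is in your equality analysis. You invoke Proposition~\ref{aut.prod2} to conclude that the one-parameter subgroup $\sigma\subset{\rm Aut}^{o}(X,L_t)$ obtained from K-polystability of $(X,L_t)$ must preserve $D$. But that proposition asserts ${\rm Aut}^{o}((X,D),L)={\rm Aut}^{o}(X^{o},L^{o})$: it compares automorphisms \emph{already preserving $D$} with automorphisms of the open part. It says nothing about the inclusion ${\rm Aut}^{o}((X,D),L_t)\subset{\rm Aut}^{o}(X,L_t)$, and in particular does not exclude a $\mathbb{G}_m$-action on $(X,L_t)$ that moves $D$ inside its linear system---the very pathology you yourself flag. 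Corollary~\ref{prod} is likewise inapplicable: it presupposes the open part is already a product test configuration of $(X^{o},L^{o})$, which is what you are trying to prove. The paper simply defers this polystability interpolation to \cite{LiSu}; if you want to fill it in, you need the actual Li--Sun argument (or an equivalent), not Proposition~\ref{aut.prod2}.

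A secondary issue: when $\{D\}\neq 0$, the second endpoint of your interpolation is $(X,\{D\},L_t)$, not $(X,0,L_t)$, so K-polystability of $(X,L_t)$ is not the right hypothesis there. Your asymptotic workaround via $\mathcal{L}_t/t\to\lfloor\mathcal{D}\rfloor$ is too vague to close this. In practice the lemma is only applied in the paper (semi-abelian and rational elliptic surface cases) with $D$ reduced, so $\{D\}=0$, and the difficulty disappears; but as stated your handling of the fractional case is incomplete.
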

\begin{proof}
This follows straightforward from the linearlity of the log Donaldson-Futaki invariant 
with respect to the linear change of the coefficient of the 
boundary divisor $D$ (see \cite[3.7]{OS} and \cite{LiSu}).
\end{proof}

\begin{Rem}
In the case when there is a non-negative linear combination 
of $D_{i}$ which is ample, so that $(X,D)$ 
is asymptotically log Fano in the sense of \cite{CR}, 
effective K-stability criteria have already developed much 
so that the above stability should be able to study by using them. 
\end{Rem}

Finally, we remark that, 
specifying an action of algebraic group $G$ on $(X^{o},L^{o})$ or 
$(X,L)$, we can and do naturally define 
the 
{\it $G$-equivariant version}, such as $G$-equivariantly 
(weak) open K-polystable etc, of above stability notions 
mean that we only concern log test configurations with $G$-
action on whole total space with $G$-linearization. 
Then we can show: 

\begin{Lem}\label{equiv}
If $G$ is a connected algebraic group acting on 
open Calabi-Yau polarized variety $(X^{o},L^{o})$, then for 
$G$-equivariant 
weak open K-polystability of $(X^{o},L^{o})$ 
and 
weak open K-polystability of $(X^{o},L^{o})$ 
are equivalent. 
\end{Lem}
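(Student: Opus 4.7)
The implication from weak open K-polystability to its $G$-equivariant version is immediate: any $G$-equivariant plt-type log test configuration is in particular a plt-type log test configuration, and if it has vanishing log Donaldson-Futaki invariant then by hypothesis it must be a product log test configuration; the product structure is then automatically $G$-equivariant by Corollary \ref{prod}. So the content is the converse direction.

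For that direction, first fix any compactification $(X^{o},L^{o})\subset ((X,D),L)$ as a polarized dlt log Calabi-Yau pair, which exists by assumption. By Proposition \ref{aut.prod2}, since $K_{X}+D\equiv 0$, the identity components $\mathrm{Aut}^{o}((X,D),L)$ and $\mathrm{Aut}^{o}(X^{o},L^{o})$ coincide, so the connected group $G$ acts on the whole compactification $((X,D),L)$, and hence diagonally (with trivial second factor) on $((X\times\PP^{1},D\times\PP^{1}),L\boxtimes\OO_{\PP^{1}}(1))$.

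Now take an arbitrary plt-type log test configuration $((\X,\mathcal{D}),\mathcal{L})$ of $((X,D),L)$ with vanishing log Donaldson-Futaki invariant. By Proposition \ref{tc.lc}, such a configuration is determined by the valuation $v_{\X_{0}}$, which is a log canonical place of the pair $(X\times\PP^{1},D\times\PP^{1}+X\times\{0\})$. The key observation is that, exactly as in the proof of Lemma \ref{crep.class} via Zariski's lemma together with the log canonicity of the ambient pair, the set of such log canonical places is \emph{at most countable}. The connected algebraic group $G$ acts on this countable set through its action on $X\times\PP^{1}$, and any action of a connected algebraic group on a countable (discrete) set is trivial. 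Consequently $v_{\X_{0}}$ is $G$-invariant; by the uniqueness clause of Proposition \ref{tc.lc} (together with Proposition \ref{aut.prod2} and \cite{MM} to propagate equivariance to the polarization), the test configuration $((\X,\mathcal{D}),\mathcal{L})$ is itself $G$-equivariant.

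Applying the $G$-equivariant weak open K-polystability hypothesis to this configuration yields that $((\X,\mathcal{D}),\mathcal{L})$ is a product log test configuration, which is what is required. The only real point to watch is the countability of the relevant log canonical places together with the connectedness of $G$; once these are combined, the argument is essentially formal.
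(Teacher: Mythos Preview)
Your proof is correct and follows essentially the same approach as the paper: the paper's one-line argument invokes Proposition~\ref{tc.lc} to say that the relevant plt-type log test configurations ``should hold $G$-action automatically,'' and you have correctly unpacked why this is so---namely, such configurations are determined by log canonical places, which form a countable set on which the connected group $G$ must act trivially. Your version is simply a more detailed elaboration of the same idea.
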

\begin{proof}
This follows from Proposition \ref{tc.lc} (also see 
Corollary\ref{prod}) which 
implies all the log test configurations satisfying the condition in 
{\it loc.cit} should hold $G$-action automatically. 
\end{proof}


\subsection{Reducible setup}

Now we discuss stability notions for polarized 
Calabi-Yau varieties $(X,L)$ where $X$ is non-normal. 
More precisely, we use the notion of 
semi-divisorially log terminality (sdlt for short), as a natural 
non-normal or demi-normal version  of 
original divisorial log terminality. See e.g., 
\cite[\S 2.3]{KM}, \cite[1.1]{FjnM} for details. 

\begin{Def}\label{ops4}
Suppose a connected projective scheme  
$X$ has only semi-dlt singularities and 
$K_{X}\equiv 0$, hence equivalently 
$K_{X}\sim_{\Q} 0$ by \cite{FjnM, Gona}. 

We denote its irreducible decomposition as $X=\cup_{i}V_{i}$ 
with the double locus (conductor divisor) as $D_{i}\subset V_{i}$ 
so that $(V_{i},D_{i})$ is log Calabi-Yau dlt pair for each $i$. 
We also consider a polarization i.e., an ample line bundle on $X$. 

\begin{enumerate}
\item \label{gwps} 
Consider all log test configurations 
$((\mathcal{X},\mathcal{D}),\mathcal{L})$ of $((X,D),L)$ 
which satisfy that 
\begin{itemize}
\item $\mathcal{X}$ satisfies Serre's $S_{2}$ condition, 

\item its restriction to the closure of $V_{i}\times 
(\PP^{1}\setminus \{0\})$ is of plt type in the sense of 
Definition~\ref{ops} \eqref{pltt}, 

\item the log Donaldson-Futaki invariant (cf.,  
\cite{Don11}, \cite[\S 3]{OS}) vanishes. 

\end{itemize}
\noindent
The polarized log Calabi-Yau pair 
$((X,D),L)$ is {\it weakly open K-polystable} 
if any such above type log test configuration $((\mathcal{X},\mathcal{D}),\mathcal{L})$ satisfies that 
{\it the klt (open) locus of}  
$(\mathcal{X},\mathcal{D})$ is a log test configuration of product type of the open locus $(X^{o},L^{o})$. 
By applying Lemma~\ref{prod} to all its components, we see that the condition of such product-ness is also equivalent to that 
the restriction of the log test configuration  
to the closure of $V_{i}\times 
(\PP^{1}\setminus \{0\})$ are 
log {\it product} test configurations for every $i$.

\item \label{gops} 
$((X,D),L)$ is {\it open K-polystable} 
if it is weakly open K-polystable and furthermore 
for each $i$, $((V_{i},D_{i}),L|_{V_{i}})$ is stabilizing in the sense of 
Definition~\ref{ops2}. 

\item \label{gsps} 
$((X,D),L)$ is {\it strongly open K-polystable} 
if it is open K-polystable and furthermore 
for each $i$, $((V_{i},D_{i}),L|_{V_{i}})$ 
is strongly 
stabilizing in the sense of 
Definition~\ref{ops3}. 

\end{enumerate}
\end{Def}
Specifying an action of algebraic group $G$ on $((X,D),L)$, 
we can naturally introduce 
the 
{\it $G$-equivariant version} of above stability notions, 
such as $G$-equivariantly 
open K-polystable etc. 
They mean that we only concern log test configurations with $G$-
action on whole $((\mathcal{X},\mathcal{D}),\mathcal{L})$. 

Similarly to Lemma \ref{equiv}, the following holds. 

\begin{Lem}\label{equiv2}
If $G$ is a connected algebraic group acting on 
sdlt log Calabi-Yau polarized variety $((X,D),L)$, then 
$G$-equivariant 
weak open K-polystability of $((X,D),L)$ holds 
if and only if 
weak open K-polystability of $((X,D),L)$ 
holds. 
\end{Lem}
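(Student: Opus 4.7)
One direction is immediate: weak open K-polystability trivially restricts to its $G$-equivariant version, since $G$-equivariant log test configurations form a subclass of those considered in Definition~\ref{ops4}\eqref{gwps}. For the converse, I take any log test configuration $((\mathcal{X},\mathcal{D}),\mathcal{L})$ of the type in Definition~\ref{ops4}\eqref{gwps}. The plan is to show it is automatically $G$-equivariant, so that the $G$-equivariant hypothesis supplies the required product-ness of each component restriction.

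Restricting to the closure of $V_{i}\times(\PP^{1}\setminus\{0\})$ produces, by assumption, a plt-type log test configuration of the polarized dlt log Calabi-Yau pair $((V_{i},D_{i}),L|_{V_{i}})$. By the intersection-number formula \cite[Theorem~3.7]{OS}, the log Donaldson--Futaki invariant of $((\mathcal{X},\mathcal{D}),\mathcal{L})$ is additive over these components, and each component contribution is non-negative by Theorem~\ref{OSrev}; the assumed vanishing of the total invariant therefore forces vanishing on every component. Proposition~\ref{tc.lc} then identifies each component-restriction with a log canonical valuation $v_{i}$ of $(V_{i}\times\PP^{1},\,D_{i}\times\PP^{1}+V_{i}\times\{0\})$.

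Since $G$ is connected it preserves each $V_{i}$ and acts on the set of such valuations $v_{i}$ via its action on $V_{i}\times\PP^{1}$ (trivial on the $\PP^{1}$ factor). By Zariski's lemma applied inside a dlt model together with the dlt hypothesis, this set is at most countable—the same mechanism as in the proof of Lemma~\ref{crep.class}—so connectedness of $G$ forces each $v_{i}$ to be $G$-fixed. Via the valuation-to-test-configuration correspondence of Proposition~\ref{tc.lc}, together with Corollary~\ref{prod} to propagate equivariance to the polarization, each restriction $(\mathcal{X}_{i},\mathcal{D}_{i},\mathcal{L}_{i})$ then inherits a natural $G$-action extending the one on the generic fiber $(V_{i},D_{i},L|_{V_{i}})$.

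It remains to glue these component-wise actions into a global $G$-action on $((\mathcal{X},\mathcal{D}),\mathcal{L})$. They agree along the preimages of the double loci because they all restrict over $t\neq 0$ to the given $G$-action on $X$, which respects the decomposition $X=\bigcup V_{i}$ by hypothesis. This defines the $G$-action on the complement of a subset of codimension $\geq 2$ in $\mathcal{X}$, and the $S_{2}$ condition imposed in Definition~\ref{ops4}\eqref{gwps} extends it uniquely to all of $\mathcal{X}$; the $G$-linearization of $\mathcal{L}$ is recovered by the same codimension-one rigidity argument used in Corollary~\ref{prod}. With $((\mathcal{X},\mathcal{D}),\mathcal{L})$ now $G$-equivariant, the hypothesis yields that each $(\mathcal{X}_{i},\mathcal{D}_{i},\mathcal{L}_{i})$ is a log product test configuration, which by the equivalent characterization in Definition~\ref{ops4}\eqref{gwps} is exactly weak open K-polystability of $((X,D),L)$. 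The main technical delicacy I anticipate is verifying that the component-wise $G$-actions really match along the double loci inside the central fiber, but this is forced by the compatibility of the $G$-action on $X$ with the gluing $X=\bigcup V_{i}$ and the $S_{2}$ extension.
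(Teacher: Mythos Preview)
Your proof is correct and follows essentially the same route as the paper: decompose the log Donaldson--Futaki invariant over components via \cite[Theorem~3.7]{OS}, use Proposition~\ref{tc.lc} and the countability of log canonical places (connectedness of $G$) to force $G$-equivariance on each component, glue, and then invoke the $G$-equivariant hypothesis. The only difference is the gluing step: the paper appeals to the uniqueness of gluing along conductors \cite[\S5.6]{Kol13} (equivalently, that a $G$-action on the normalization commuting with the gluing involution on a dense open descends), whereas you phrase it via the $S_{2}$ condition; these are two packagings of the same extension principle, and the paper's citation makes the compatibility along the central-fiber double locus more immediately transparent than your codimension-$2$ extension sketch.
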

\begin{proof}
``If'' direction is obvious by definition. 
The ``only if'' direction is reduced to Lemma\ref{equiv} as follows: 
suppose we take a log test configuration 
$((\mathcal{X},\mathcal{D}),\mathcal{L})$ satisfying the 
three conditions in Definition~\ref{ops4}\eqref{gwps}. 
Then, if we take the normalization of $\mathcal{X}$, 
the log Donaldson-Futaki invariant 
${\rm DF}((\mathcal{X},\mathcal{D}),\mathcal{L})$ decomposes to 
the contributions of the log test configurations of each 
closures of $V_{i}\times (\PP^{1}\setminus \{0\})$ e.g., 
by the intersection number formula (\cite{OS}). 
Hence, the assertion follows from Lemma\ref{equiv} or 
more directly from Proposition\ref{tc.lc} that each 
components are product test configurations admitting 
extended $G$-action i.e., corresponding to 
a $\C^{*}$-action which commutes with the given $G$-action. 
By the uniqueness of gluing along conductors 
(\cite[\S 5.6]{Kol13}, also cf., \cite{FjnM}), 
the action also extends to the whole 
log test configuration. 
\end{proof}

\begin{Ques}[Component-wise nature?]
For a polarized semi-dlt Calabi-Yau variety $(X=\cup_{i} V_{i}, L)$ 
to be weakly open K-polystable, is it equivalent to the 
weak open K-polystability for {\it all} 
$(V_{i}^{o},L|_{V_{i}^{o}})$? 
Here, 
$V_{i}^{o}$ denotes the open subset of $V_{i}$ as the 
complement of the double locus $V_{i}\cap 
(\cup_{j\neq i}V_{j})$. 
\end{Ques}

We end the section by 
observing that at least partially this is true. 

\begin{Prop}
For a polarized semi-dlt Calabi-Yau variety $(X=\cup_{i} V_{i}, L)$, 
weak open K-polystabilities for {\it all} 
$(V_{i}^{o},L|_{V_{i}^{o}})$ imply that of $(X,L)$. 

Furthermore, in case $V_{i}$ is smooth (factoriality is enough) and 
the dual complex is one dimensional (i.e., there is no 
three distinct $V_{i}, V_{j}, V_{k}$ intersecting), 
the converse also holds. 
\end{Prop}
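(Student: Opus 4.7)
My plan is to prove the two implications separately; the forward implication is essentially the normalization-plus-decomposition argument used in Lemma~\ref{equiv2}, while the converse requires a careful gluing that depends critically on the factoriality and the 1-dimensional dual complex hypothesis.

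\textbf{Forward direction.} I would start with an arbitrary log test configuration $((\mathcal{X},\mathcal{D}),\mathcal{L})$ of $((X,D),L)$ satisfying the three conditions of Definition~\ref{ops4}(i) and pass to the normalization $\nu\colon \mathcal{X}^{\nu}=\sqcup_i\mathcal{V}_i\to\mathcal{X}$. The plt-type condition imposed on each component-restriction in the definition makes every $(\mathcal{V}_i,\mathcal{D}_i)$ a plt-type log test configuration of $(V_i,D_i)$, where $\mathcal{D}_i$ collects the strict transform of $\mathcal{D}$ together with the conductor so that $K_{\mathcal{V}_i}+\mathcal{D}_i=\nu^*(K_{\mathcal{X}}+\mathcal{D})|_{\mathcal{V}_i}$. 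As in the proof of Lemma~\ref{equiv2}, the intersection number formula \cite[Theorem 3.7]{OS} yields the decomposition
\[
\mathrm{DF}((\mathcal{X},\mathcal{D}),\mathcal{L})=\sum_{i}\mathrm{DF}((\mathcal{V}_i,\mathcal{D}_i),\mathcal{L}|_{\mathcal{V}_i}).
\]
Each summand is nonnegative by log K-semistability of the dlt log Calabi-Yau pair $((V_i,D_i),L|_{V_i})$ (Theorem~\ref{OSrev}), so vanishing of the total forces every summand to vanish. Weak open K-polystability of each $(V_i^o,L|_{V_i^o})$ then makes each $(\mathcal{V}_i,\mathcal{D}_i,\mathcal{L}|_{\mathcal{V}_i})$ a log product test configuration, and applying the equivalent characterization at the end of Definition~\ref{ops4}(i) componentwise gives weak open K-polystability of $(X,L)$.

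\textbf{Converse, under factoriality and the 1-dimensional dual complex hypothesis.} I argue by contrapositive. Suppose, relabeling, that $(V_1^o,L|_{V_1^o})$ is not weakly open K-polystable. By Corollary~\ref{prod} there is a plt-type log test configuration $(\mathcal{V}_1,\mathcal{D}_1,\mathcal{L}_1)$ of $(V_1,D_1)$ with vanishing log Donaldson-Futaki invariant whose restriction to $V_1^o\times(\PP^1\setminus\{0\})$ is non-product. The 1-dimensional dual complex assumption gives $D_1=\sum_{j\neq 1}D_{1j}$ with $D_{1j}=V_1\cap V_j$ and no triple incidences, so the candidate $\mathcal{X}$ is built by gluing $\mathcal{V}_1$ with the trivial product configurations $V_j\times\PP^1$ for $j\neq 1$ along the conductors, using the uniqueness of semi-normal gluing in \cite[\S 5.6]{Kol13}. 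Granting the compatibility addressed below, the resulting $\mathcal{X}$ is $S_2$ as a gluing of $S_2$ pieces along a reduced locus, each component-restriction is plt-type (by assumption on $\mathcal{V}_1$, trivially on the products), the same decomposition formula gives $\mathrm{DF}((\mathcal{X},\mathcal{D}),\mathcal{L})=\mathrm{DF}((\mathcal{V}_1,\mathcal{D}_1),\mathcal{L}_1)+0=0$, and non-product-ness of $\mathcal{V}_1$ passes to $\mathcal{X}$, contradicting the weak open K-polystability of $(X,L)$.

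\textbf{Main obstacle.} The compatibility required for the gluing is that the closure $\mathcal{D}_{1j}\subset\mathcal{V}_1$ of $D_{1j}\times(\PP^1\setminus\{0\})$ be isomorphic to the product $D_{1j}\times\PP^1$ for every $j\neq 1$, so that it matches the conductor side of $V_j\times\PP^1$. My intended proof combines Proposition~\ref{tc.lc}, which identifies $(\mathcal{V}_1)_0$ with a log canonical place of $(V_1\times\PP^1,\,D_1\times\PP^1+V_1\times\{0\})$, with adjunction along the log canonical center $D_{1j}\times\PP^1$: factoriality of $V_1$ makes $D_{1j}\times\PP^1$ Cartier, and the 1-dimensional dual complex hypothesis rules out extra lc strata supported in $D_{1j}$, so adjunction yields a dlt log Calabi-Yau structure on $D_{1j}$ together with an induced plt-type log test configuration whose log Donaldson-Futaki invariant still vanishes. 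An induction on dimension, invoking the same statement for $D_{1j}$, then forces this restriction to be the trivial product. I expect the careful bookkeeping of adjunction — tracking that the plt-type condition, the log canonical property of the central fiber, and vanishing log DF all restrict compatibly to $D_{1j}$ — to be the principal technical hurdle of the argument.
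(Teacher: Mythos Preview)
Your forward direction is correct and essentially what the paper has in mind when it says the statement ``is obvious from the definition'': the decomposition of $\mathrm{DF}$ under normalization (as in the proof of Lemma~\ref{equiv2}) together with componentwise log K-semistability forces each summand to vanish, and then componentwise weak open K-polystability gives the conclusion via the equivalent characterization at the end of Definition~\ref{ops4}\eqref{gwps}.

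For the converse you follow the same overall strategy as the paper --- take a destabilizing plt-type test configuration $\mathcal{V}_1$ of one component, glue it to the trivial test configurations of the other $V_j$ via Koll\'ar's gluing theory, and contradict weak open K-polystability of $(X,L)$ --- but your mechanism for showing that the conductor $\mathcal{D}_{1j}\subset\mathcal{V}_1$ is the trivial product $D_{1j}\times\PP^1$ is different, and this is where a gap appears. The paper argues directly via Proposition~\ref{tc.lc} and Zariski's lemma \cite[2.45]{KM}: the lc place $(\mathcal{V}_1)_0$ is reached by successive blow-ups of $V_1\times\A^1$ along lc centers, which under factoriality and the $1$-dimensional dual complex hypothesis are all of the form $D_{1j}\times\{0\}$; since each such center is a Cartier divisor inside the strict transform of $D_{1j}\times\A^1$, these blow-ups leave that strict transform isomorphic to the original. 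This yields at once that ``the double loci remain isomorphic as original'', i.e.\ $\mathcal{D}_{1j}\cong D_{1j}\times\PP^1$ as the \emph{trivial} test configuration.

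Your route via adjunction and ``induction on dimension, invoking the same statement for $D_{1j}$'' does not quite close. First, $D_{1j}$ under your hypotheses is a compact klt Calabi-Yau variety with empty boundary, so it is not in the scope of the proposition you are proving --- there is no genuine induction here. What you actually need is that the restricted test configuration of $(D_{1j},L|_{D_{1j}})$ is the \emph{trivial} product, not merely \emph{a} product: K-polystability of $D_{1j}$ (Theorem~\ref{OSrev}) only gives the latter, and if the restriction were a nontrivial product via some $\sigma\colon\G_m\to\Aut(D_{1j})$, the $\G_m$-actions on $\mathcal{D}_{1j}$ and on the conductor of the trivial $V_j\times\PP^1$ would not match, so the equivariant gluing fails. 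The correct substitute is to observe, after adjunction, that $(D_{1j}\times\A^1,\,D_{1j}\times\{0\})$ has a \emph{unique} lc place (the central fiber itself), because $D_{1j}$ is klt with no boundary; hence $(\mathcal{D}_{1j})_0$ is forced to be the trivial valuation. Once you say it this way, you are essentially re-deriving the paper's blow-up argument in restricted form, and the ``induction'' and the DF-vanishing of the restriction become irrelevant. So your strategy can be salvaged, but the paper's Zariski-lemma route is both shorter and avoids the adjunction bookkeeping you flag as the main hurdle.
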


\begin{proof}
The former statement is obvious from the definition. 
We prove the latter statement by contradiction. 
If there is an index $i$ such that $(V_{i}^{o},L|_{V_{i}^{o}})$ 
is not weakly open K-polystable, 
there is a plt type test configuration 
$(\mathcal{V}_{i},\mathcal{L}_{i})$ of 
$(V_{i},L|_{V_{i}})$ whose log Donaldson-Futaki invariant is zero 
and is dominated by composite of blow up of 
the connected (or equivalently, irreducible) components 
of $V_{i}\cap (\cup_{j\neq i}V_{j})$. In particular, 
the double loci remain isomorphic as original. 

Then we can glue trivial test configurations of 
$(V_{k},L_{k})$ for all $k\neq i$ and 
$(\mathcal{V}_{i},\mathcal{L}_{i})$ by 
\cite[Thm 9.21]{Kol13} (as varieties) and 
\cite[Prop 9.48]{Kol13} (polarizations). 
This contradicts the weak open K-polystability of 
$(X,L)$. 
\end{proof}


\section{Testing known examples}\label{ex.sec}

This section shows our analysis of the 
stability notions \ref{ops}, \ref{ops2}, \ref{ops3} for 
various class of examples below, which match 
to known (and unknown) gravitational instantons as well as some 
phenomena observed in examples of moduli. 
Here is the list of the results. 

\begin{Thm}\label{ops.ex}
Suppose $((X,D),L)$ is a 
$n$-dimensional polarized dlt log Calabi-Yau pair, 
and take its klt open locus as $X^{o}:=X^{\rm klt}, L^{o}=L|_{X^{o}}$. 
As in section \ref{stab.sec}, 
we do not assume $D$ to be integral divisor 
but can be real divisor. 

\begin{enumerate}

\item \label{..1}
If $D$ is smooth and Cartier, then 
$(X^{o},L^{o})$ is weakly open K-polystable for any $L$. 

\item \label{.1} 
More generally, 
suppose $(X,D)$ is purely log terminal (plt) and further 
$D$ has only canonical singularities and satisfies 
adjunction i.e., $K_{D}=0$ (equivalently, the 
{\it different} in Shokurov's sense is trivial). 
Then it is weakly open K-polystable for any $L$. 

\item \label{.1app}
Under the situation \eqref{.1} above, 
if further one can take $L$ as $-K_{X}$ (hence, with 
$\mathbb{Q}$-Fano $X$), then $(X^{o},L^{o})$ 
is strongly open K-polystable. 

\item \label{.1app2}
Under the situation \eqref{.1} above, 
if we can take the compactification 
$X$ as a semi-Fano manifold 
in the sense of 
\cite[4.11]{ACyl3}, 
then $(X^{o},L^{o})$ 
is again strongly open K-polystable. 

\vspace{2mm}

\item \label{toric} 
If $X^{o}$ is algebraic torus, then $(X^{o},L^{o})$ 
is strongly 
open K-polystable.

\item \label{semiav} 
More generally, if $X^{o}$ is semi-abelian variety, 
then $(X^{o},L^{o})$ 
is strongly open K-polystable. 

\vspace{2mm}

\item \label{logcm}
If $G$ is a reductive algebraic subgroup of 
${\rm Aut}^{o}(X)$ and 
$((X,D),L)$ is weakly open K-polystable, then 
$D$ is GIT polystable with respect to the 
$G$-action. 

\vspace{3mm}

\item \label{cluster}
If $(X,D)$ is a 
cluster log surface (\cite{FG.cluster, GHK}), then 
$((X,D),L)$ is not even weakly open K-polystable for any $L$. 

\item \label{res}
If $X$ is a rational elliptic surface with $D$ a 
nodal fiber of $I_{\nu} (\nu \ge 1)$ type, 
then $((X,D),L)$ is strongly open K-polystable at least for some $L$. 

\item \label{n.}
If $((X,D),L)$ is weakly open K-polystable and 
some irreducible component $D_{i}$ of $\lfloor D\rfloor $ 
satisfies that 
\begin{itemize}
\item 
$L|_{X\setminus D_{i}}\sim_{\Q} 0$ 
and \item $D_{i}$ is ample (e.g. when 
$\rho(X)=1$),
\end{itemize}
 then $((X,D),L)|_{X\setminus D_{i}}$ 
 is the affine 
cone of a certain $(n-1)$-dimensional 
dlt log Calabi-Yau pair $((X',D'),L'):=
((D_{i},\cup_{j\neq i}D_{j}\cap D_{i}),N_{D_{i}/X}).$

\vspace{2mm}

\item \label{ruled}
Suppose $(X,\lfloor D \rfloor = D_{1}\sqcup D_{2})$ with Cartier 
$\lfloor D \rfloor$ and connected $D_{i}$, which 
dominates birationally 
$X'$ which is a $\mathbb{P}^{1}$-bundle over a $(n-1)$-dimensional 
projective variety, such that the images $D'_{i}$ 
of $D_{i}$ are its two sections. 
Then the following holds. 
\begin{itemize}
\item $(D'_{i},{\rm Diff}_{D'_{i}}(0)) (i=1,2)$, 
where ${\rm Diff}(-)$ denotes the Shokurov different 
(cf., e.g., \cite{Kol13}), is 
a klt log Calabi-Yau pair, canonically isomorphic to each other. 
We denote it as $(B,D_{B})$. 
\item If $((X,D),L)$ is weakly open K-polystable for a 
polarization $L$, then $X\to X'$ is isomorphism and 
there is a holomorphic line bundle $N$ on $B$ 
such that $X'=\PP_{B}(\OO\oplus N)$ and 
$D'_{1}$ and $D'_{2}$ are the natural sections, 
the $0$-section and the infinity section 
with respect to the splitting. 

\item Suppose $B$ is an elliptic curve. 
\footnote{We expect this dimension condition would be able to 
removed if we do more refined discussion.}
If $(X^{o},L^{o})$ is further open K-polystable 
as $((X,D),L)$ is stabilizing, 
if and only if $N$ is a numerically trivial line bundle. 
\end{itemize}
\end{enumerate}

\end{Thm}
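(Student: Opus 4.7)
For the first assertion, the plan is to descend the log Calabi-Yau condition from $(X,D)$ to each section by Koll\'ar--Shokurov adjunction. Since $K_X + D \sim_{\mathbb{Q}} 0$ and $\pi\colon X \to X'$ is a crepant birational morphism of dlt pairs (crepancy follows from $K_X+D\equiv 0\equiv K_{X'}+D'_1+D'_2+\pi_*\{D\}$ together with the negativity lemma), one-step adjunction along each section $D'_i$ gives $K_{D'_i} + \mathrm{Diff}_{D'_i}(0) \sim_{\mathbb{Q}} 0$, with klt different; the kltness comes from the dlt property together with inversion of adjunction (see \cite[\S 4]{Kol13}). For the canonical isomorphism between the two differents, I would use the $\mathbb{P}^1$-bundle projection $\rho\colon X' \to B$ to identify both $D'_i$ with the base $B$, and then argue that each different records precisely the horizontal components of $\pi_*\{D\}$ (which meet every $\mathbb{P}^1$-fibre in points flowing to the same points on both sections) together with matching codimension-one contributions from singularities of $X'$ along each section.

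For the second assertion, the splitting $X' \simeq \mathbb{P}_B(\mathcal{O} \oplus N)$ with $D'_1, D'_2$ as the zero- and infinity-sections is automatic from the general fact that two disjoint sections of a $\mathbb{P}^1$-bundle $\mathbb{P}_B(E)$ exhibit $E$ as a direct sum of two line subbundles. The remaining task is to prove $\pi$ is an isomorphism. I argue by contradiction: given any $\pi$-exceptional divisor $F \subset X$, the plan is to construct a plt-type log test configuration of $((X,D),L)$ by a variant of Lemma~\ref{contract}, namely by running a suitable relative MMP over $\mathbb{A}^1$ from an appropriate blow-up of $X \times \mathbb{A}^1$, obtaining an irreducible klt central fibre whose associated valuation is proportional to $\mathrm{ord}_F$. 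Because $\pi$ is crepant, the vanishing-of-discrepancy condition~(\ref{ld.van}) in Proposition~\ref{tc.lc} is satisfied, so the log Donaldson-Futaki invariant vanishes. Yet $v_F$ cannot be induced by any one-parameter subgroup of $\Aut^o((X,D),L)$, since algebraic group actions preserve divisor dimensions while $F$ is contracted to a lower-dimensional locus in $X'$. This produces a non-product plt-type test configuration with zero log DF, contradicting weak open K-polystability; hence $\pi$ is an isomorphism.

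For the third assertion, with $B$ elliptic I would treat the two directions separately using the fibrewise $\mathbb{G}_m$-action on $\mathbb{P}_B(\mathcal{O} \oplus N)$ fixing $D'_1 \cup D'_2$. For the destabilizing direction, assume $\deg N \neq 0$: the deformation-to-normal-cone of $D'_1$, built via the Lemma~\ref{contract} construction, yields a plt-type log test configuration of $((X, \{D\}+(1-\beta)\lfloor D \rfloor), L(\frac{c}{\beta}\lfloor D\rfloor))$ whose log Donaldson-Futaki invariant I expect to compute, using the intersection-number formula of \cite[Thm 3.7]{OS}, as a nonzero multiple of $\deg N$ at leading order as $\beta \to 0$, violating Definition~\ref{ops2}. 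For the polystable direction, assume $N$ is numerically trivial: I would classify the plt-type log test configurations with vanishing log Donaldson-Futaki invariant through their underlying log canonical valuations (Proposition~\ref{tc.lc}), and show each one is induced either by the fibrewise $\mathbb{G}_m$-action or by a translation of $B$ (since $\Aut^o(B) = B$), hence is product by Corollary~\ref{prod}. The main obstacle is this last classification step: ruling out ``exotic'' log canonical valuations on $X \times \mathbb{P}^1$ beyond those coming from the geometric automorphism group, which will require the low-dimensional rigidity of the elliptic base together with careful use of the dreamy-valuation viewpoint recalled in Definition~\ref{ops}.
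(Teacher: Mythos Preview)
Your first assertion is fine and parallels the paper, which simply cites the classification in \cite[2.1]{FjnM}, \cite[5.3]{Gona} together with adjunction.

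Your second assertion has a genuine gap. You want to build, from a $\pi$-exceptional divisor $F\subset X$, a plt-type test configuration whose central-fibre valuation is ``proportional to $\mathrm{ord}_F$'', and then invoke Proposition~\ref{tc.lc} using crepancy of $\pi$. But crepancy of $\pi\colon X\to X'$ says $a_{(X,D)}(F)=0$, i.e.\ $F$ has \emph{log discrepancy $1$}, not $0$; $F$ is \emph{not} a log canonical place of $(X,D)$. Consequently any divisorial valuation on $K(X\times\mathbb{A}^1)$ with centre along $F\times\{0\}$ fails to be a log canonical place of $(X\times\mathbb{A}^1,\,D\times\mathbb{A}^1+X\times\{0\})$: the lc centres of this dlt pair are the strata of $\lfloor D\rfloor\times\mathbb{A}^1+X\times\{0\}$, and $F$ is not contained in $\mathrm{Supp}\,\lfloor D\rfloor$. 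So condition~(\ref{ld.van}) is \emph{not} satisfied, the log Donaldson--Futaki invariant of your test configuration need not vanish, and no contradiction follows.

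The paper's route is both simpler and avoids this trap entirely: it performs the deformation to the normal cone of $D_i\subset X$ (exactly the Lemma~\ref{contract} construction) to degenerate $X$ to $\mathbb{P}_{D_i}(\mathcal{O}\oplus N_{D_i/X})$. Here $D_i$ \emph{is} a log canonical place, so the log DF invariant vanishes by Proposition~\ref{tc.lc}; weak open K-polystability then forces this to be a \emph{product} test configuration, hence $X\simeq\mathbb{P}_{D_i}(\mathcal{O}\oplus N_{D_i/X})$ outright. This single step simultaneously yields the splitting and the triviality of $\pi$, with $N=N_{D_i/X}$.

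For the third assertion the paper does not classify lc valuations at all: once $X=\mathbb{P}_B(\mathcal{O}\oplus N)$ is established, it cites the slope-/Futaki-invariant computation for projective bundles over a curve (\cite[5.23]{RT.DG}, \cite{AT,AK}) to conclude that the Futaki character vanishes iff $\deg N=0$. Your approach is in the right spirit for the destabilising direction but the ``main obstacle'' you flag for the polystable direction is real and unnecessary; the cited bundle computations dispose of both directions at once.
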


\begin{Rem}
For \eqref{.1}, 
recall that if $D$ is 
not necessarily Cartier, then in general 
klt condition of $X^{o}, D$ alone 
are not enough to imply plt condition of $(X,D)$ because of the 
failure of adjunction (yielding nontrivial Different 
in the sense of Shokurov). 
\end{Rem}

\begin{Rem}\label{2compo}
For \eqref{.1} again, 
\cite[12.3.2]{FA}, \cite[2.1]{Fjn} 
show that $\lfloor D \rfloor$ has at most two connected 
component and if there are two connected component $D_{1}$ and 
$D_{2}$, they are birational through a $\PP^{1}$-fibration 
over some $(n-1)$-dimensional log terminal base $B$, 
up to a log crepant birational transform.  
We expect that the only open K-polystable such 
$(X^{o},L^{o})$ have a structure $\PP_{B}(\mathcal{O}
\oplus M)$ where $B\in {\rm Pic}^{0}(B)$. 
\end{Rem}

\begin{Rem}
{\it Cluster log surface} in \eqref{cluster} simply 
means the following in our paper, as a simple variant of 
$2$-dimensional 
cluster varieties introduced originally in  
\cite{FG.cluster} (see also \cite{GHK, MV}). 

Let us start from another log smooth Calabi-Yau surface $(X',D')$. 
Then we can blow up a smooth point $p$ of $\lfloor D' \rfloor$, 
which we denote as $\varphi$ as a birational morphism here, 
and take the strict transform of $D'$ to obtain a new 
log smooth Calabi-Yau surface $(X={\rm Bl}_{p}(X'),D=\varphi^{-1}_{*}
D')$. 
In general, cluster log surface in our paper means a 
log smooth Calabi-Yau surface obtained 
by applying this procedure finite times (at least once) 
from a toric log Calabi-Yau pair. 

Also, the statements and the proof of \eqref{cluster} should be easily 
extended to its higher dimensional analogue log Calabi-Yau varieties of type 
\cite{AG20} in which much more deep analysis for the mirror symmetry is done. 
We wish to leave the details to some readers. 
\end{Rem}

\begin{proof}[Proof of Theorem~\ref{ops.ex}]
We provide proofs to each item above one by one. 

\vspace{2mm}

Proof of \eqref{.1}: 
The Iitaka dimension of $D$ is $0$ because of the canonicity of the 
singularity, and therefore in particular it is not uniruled. 
Consider plt-type testconfiguration with vanishing Donaldson-Futaki invariant 
$\X$ then 
the strict transform of $X\times \{0\}$ is log canonical place 
of $(\X_{0},\mathcal{D}_{\X_{0}})$ as Proposition \ref{tc.lc} shows. 
On the other hand, the adjunction says $\X_{0}$ is klt. 
Therefore, \cite{HM} 
implies $D$ with $\kappa(D)=0$ 
cannot be contracted to dimension less than $n-1$ 
in $\X_{0}$. From the fiber connectedness of birational 
morphisms between normal varieties (Zariski's main theorem), 
the center of $\X_{0}$ is birational to $D$. 
Consider the $1$-dimensional general fiber $F$ of the birational map 
$\X_{0}\dashrightarrow D$, the genus $g(F)$ must be 
$0$ because of the Iitaka conjecture (this case of relative dimension $1$ is certainly a theorem cf., e.g., \cite{Kawa}). 

\vspace{2mm}
Proof of Fano manifolds case \eqref{.1app}: 
This follows from \eqref{.1} and 
\cite[5.5]{OS}, 
\cite[2.7]{Od12} (cf., also preceding \cite{Berman}), 
which shows log K-stability for small angle with 
effective bound in terms of the alpha invariant. 

\vspace{2mm}
Proof of semi-Fano manifolds case \eqref{.1app2}: 
if we apply the Kawamata basepointfree theorem to 
$X$, as in \cite{ACyl3}, 
then it reduces to \eqref{.1app} case. 

\vspace{3mm}
Proof of \eqref{toric} and \eqref{semiav}: 
we first prove the weak open K-polystability. 
As preparation, we show the following lemma which 
should be fundamental 
and may be 
known to some experts but the author 
could not find the literatures. We write for 
convenience of readers: 
\begin{Lem}[Torus invariance lemma]\label{tor.val}
Consider an arbitrary toric log Calabi-Yau pair $(V,\Delta_{V})$ 
i.e., toric variety $V$ with the sum $\Delta_{V}$ of all torus invariant 
prime divisors. Its divisorial valuation $v$ whose 
center exists inside $V$ 
is log canonical valuation i.e., $A_{(V,\Delta_{V})}(v)=0$ 
if and only if it is torus invariant. 
\end{Lem}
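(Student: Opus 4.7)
The plan is to establish the two directions of the equivalence separately, the forward one being essentially combinatorial and the reverse one requiring an inductive argument via adjunction.

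The easy direction ($\Leftarrow$) is a standard consequence of crepancy under toric refinements: a torus-invariant divisorial valuation of $K(V)$ corresponds to a primitive element $w \in N$ lying in the support $|\Sigma_V|$ of the fan. Subdividing $\Sigma_V$ so that $\R_{\ge 0}\cdot w$ becomes a ray yields a proper birational toric morphism $V' \to V$ with associated torus-invariant prime divisor $D_w \subset V'$; since $K_{V'}+\Delta_{V'}=0=K_V+\Delta_V$, this morphism is crepant for the log pairs, and hence $A_{(V,\Delta_V)}(\mathrm{ord}_{D_w})=A_{(V',\Delta_{V'})}(\mathrm{ord}_{D_w})=0$ because $D_w$ is a coefficient-$1$ component of $\Delta_{V'}$.

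For the harder direction ($\Rightarrow$), since log discrepancies are unchanged by crepant birational maps, I first pass to a toric resolution to assume $V$ smooth toric, and then induct on $n=\dim V$. For $n=1$, divisorial valuations are point valuations $\mathrm{ord}_p$, and $A(\mathrm{ord}_p)=0$ forces $p \in \Supp(\Delta_V)$, i.e.\ $p$ is torus-fixed. In general, any non-trivial divisorial valuation centered in the open torus $T \subset V$ has log discrepancy at least $1$, so $A(v)=0$ forces the center $Z$ of $v$ to lie in some irreducible component $S=D_w$ of $\Delta_V$. If $Z=S$, then $v$ is a positive multiple of $\mathrm{ord}_S$ and is torus-invariant. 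Otherwise $Z \subsetneq S$, and adjunction along $S$ produces a smooth toric log Calabi-Yau pair $(S,\mathrm{Diff}_S(\Delta_V-S))$ of dimension $n-1$; the dlt adjunction formula for log discrepancies then yields a non-trivial divisorial valuation $v|_S$ on $S$ with
\[
0 = A_{(V,\Delta_V)}(v) = A_{(S,\mathrm{Diff}_S(\Delta_V-S))}(v|_S),
\]
so by the inductive hypothesis $v|_S$ is torus-invariant on $S$.

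To lift torus-invariance from $S$ to $V$, I fix a local equation $z$ of $S$ (a $T$-semi-invariant) and consider the torus-invariant valuation $\tilde v$ of $K(V)$ defined, under a splitting $M = M_w \oplus \Z w^{\vee}$ of the character lattice, by $\tilde v(\chi^m) = v|_S(\chi^{\bar m}) + \langle w,m\rangle \cdot v(z)$. Then $\tilde v$ agrees with $v$ on all $T$-characters, has the same depth $\mathrm{ord}_S(\cdot)=v(z)$ along $S$, and restricts to the same $v|_S$ on $K(S)$. Because $v$ is a log canonical place of the dlt pair $(V,\Delta_V)$, it is quasi-monomial on a log resolution compatible with $S$; in particular, on the formal neighborhood of $S$ at $Z$, every $f = \sum_i f_i z^i$ satisfies the monomiality identity $v(f) = \min_i\bigl(v|_S(\bar f_i) + i \cdot v(z)\bigr)$, and the same identity plainly holds for $\tilde v$. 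Hence $v = \tilde v$, so $v$ is torus-invariant. The main technical point will be this monomiality identity, which rests on the quasi-monomial structure of log canonical places in dlt pairs developed in the Ambro--Fujino--Koll\'ar theory of log canonical centers.
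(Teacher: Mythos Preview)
Your easy direction ($\Leftarrow$) is fine and matches the paper's argument.

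For the hard direction ($\Rightarrow$), the paper takes a different and more direct route than yours: after reducing to the log smooth case, it invokes Zariski's lemma \cite[2.45]{KM} to extract $v$ by a finite sequence of blow-ups, each along the current center of $v$. Since $A_{(V,\Delta_V)}(v)=0$, each such center is a log canonical center of the (successively blown-up) toric log Calabi--Yau pair, and by the structure theory of lc centers of dlt pairs (\cite[4.7, 4.8]{Amb}, \cite[\S 9]{Fjn}) these are precisely the closed toric strata. Hence every blow-up in the tower is the blow-up of a torus-invariant subvariety, so remains toric, and the prime divisor realizing $v$ lives on a toric model; thus $v$ is torus-invariant.

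Your inductive approach via adjunction has a genuine gap at the lifting step. The ``restriction'' $v|_S$ is not well-defined for an arbitrary divisorial valuation centered inside $S$, and the monomiality identity
\[
v\Bigl(\sum_i f_i z^i\Bigr) \;=\; \min_i\bigl(v|_S(\bar f_i) + i\cdot v(z)\bigr)
\]
is precisely the assertion that $v$ is monomial in $z$ together with coordinates on $S$, i.e.\ that $v$ is already toric. Your justification, that lc places of dlt pairs are quasi-monomial, only yields quasi-monomiality with respect to \emph{some} snc model dominating $(V,\Delta_V)$, not with respect to the original toric boundary $\Delta_V$ itself; the passage from the former to the latter is exactly what must be proved. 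Once one knows that lc places of a \emph{log smooth} pair are quasi-monomial with respect to the given snc boundary, the induction and the construction of $\tilde v$ become superfluous, since that statement already says $v$ is toric. But establishing it amounts to the paper's Zariski-lemma argument, so your proof is circular at this point rather than offering an independent route.
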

\begin{proof}[proof of Lemma\ref{tor.val}]
Suppose $v$ is realized as a prime divisor $F$ inside a 
blow up of $V$ as $b\colon W\to V$. 
If $v$ is toric valuation i.e., torus invariant, 
then we can take $W$ and $b$ inside the category of toric varieties. 
If we write the sum of torus invariant prime divisor of $W$ as 
$\Delta_{W}$, then obviously $(W,\Delta_{W})\to (V,\Delta_{V})$ 
is log crepant so that $F$, which is a component of $\Delta_{W}$, 
is obviously a log canonical valuation. 

Conversely, suppose $F$ gives a log canonical valuation $v=v_{F}$. 
Without loss of generality, by toric log resolution again, 
we can and do assume $(V,\Delta_{V})$ is dlt or even log smooth. 
From a lemma of Zariski (cf., \cite[Lemma 2.45]{KM}), 
then $W'$ (which could be a priori non-toric) can be obtained as 
a composition of blow up along the log canonical center of $v$. 
From \cite[4.7, 4.8]{Amb}, \cite[\S 9]{Fjn}, 
it follows that the log canonical center (of $v$) 
in $V$ is a torus invariant strata. Therefore, the 
above obtained $W'$ and the morphism $b'\colon W'\to V$ 
is again toric and $F\subset W'$ realizing $v$ is 
toric. Hence we complete the proof of Lemma\ref{tor.val}. 
\end{proof}

To show the weak open K-polystability of an algebraic torus 
$X^{o}=T$
(there is no amiguity of $L^{o}$ since ${\rm Pic}(X^{o})=0$), 
we use Proposition \ref{tc.lc} again. We take an 
arbitrary toric compactification $((X,D),L)\supset (X^{o},L^{o})$ 
and consider a plt type log test configuration $\X$ such that 
$\X_{0}$ gives 
a log canonical valuation of $(X,D)\times \PP^{1}$. 
Proposition\ref{tc.lc} says it is enough to see that 
$(\X,\overline{D\times (\PP^{1}\setminus \{0\})})$ 
is a product test configuration. 
Lemma \ref{tor.val} shows $v_{\X_{0}}$ is toric, 
while toric valuation is parametrized by 
${\rm Hom}(\mathbb{G}_{m},T\times \mathbb{G}_{m})\otimes \Q$ 
i.e., there is a product test configuration $\X'$ such that 
$\X'_{0}$ also gives $v_{\X_{0}}$. From \cite{MM} again, 
we conclude that $\X'\simeq \X$ hence the proof of 
weak open K-polystability of (polarized) algebraic torus is 
completed. 
The weak open K-polystability of polarized semiabelian 
variety is basically the same since it is \'etale locally 
product of an algebraic torus and smooth base. 

Strong open K-polystability of $X^{o}$ follows from that 
semi-abelian variety $X^{o}$ has a unique\footnote{thanks to  
algebraicity of $A$}
short exact sequence structure 
$1\to T\to X^{o}\to A\to 1$ where $T$ is an algebraic torus, 
$A$ is an abelian variety and if we see this as principal 
$(\C^{*})^{r}(\simeq T(\C))$-bundle, it has flat 
unitary connection or equivalently it is unitary local system. 
Therefore, we can compactify naturally to $X\to A$ as $\PP^{n}$-fiber 
bundle with transition function locally constant. 
That projective bundle corresponds to polystable bundle, and 
hence from Lemma~\ref{interm}, we end the desired proof of \eqref{semiav}. 

\vspace{3mm}
Proof of \eqref{logcm}: We fix $X$ and 
consider the natural universal family of 
$((X,D),L)$ where only $D$ deforms as all $D\in |-K_{X}|$, 
which we denote as $\mathcal{U}_{X}\twoheadrightarrow 
B_{X}=\PP(H^{0}(-K_{X}))$. 
Then consider log CM line bundle $\lambda_{CM}$ on $B_{X}$ 
(cf., e.g., \cite{PT}, \cite[\S 2.4]{ADL}) 
which admits the natural $G$-linearization. 
Since we assume $(X,D)$ is dlt, hence a log-canonical pair, 
Theorem\ref{OSrev} (from \cite{OS}) applies to conclude 
that it is log K-semistable. 
Note that the log CM line bundle 
is of the form $\mathcal{O}_{\PP(H^{0}(-K_{X}))}(c)$ 
but as in the argument  \cite[\S 3.2]{OSS}, \cite[2.22]{ADL}, 
we have $c>0$ and log Donaldson-Futaki invariant of $((X,D),L)$  
is proportional to 
the GIT weight (of $D$) and the proportionality constant is positive. 
Therefore, 
from the arguments 
\cite{PT}, \cite[\S 3.2]{OSS}, \cite[2.22]{ADL}, 
we see that $D$ is GIT semistable with respect to the $G$-action. 
A special case where $X=\PP^{n+1}$ is proved in 
\cite{Lee, Okw}. 

Now, we consider log test configuration 
of $((X,D),L)$  
where the ambient space is a $X$-fiber bundle, in particular, 
of plt type. Since, we now know $D$ is at least semistable 
with respect to $G$-action, weak open K-semistability 
implies the $G$-orbit is closed in $G$-invariant affine 
open subset of $B_{X}$. 
Therefore, we see that weakly open K-polystability of 
 $((X,D),L)$) 
implies the GIT polystability of $D$. 

\vspace{3mm}
Proof of \eqref{cluster}: 
this is easy to see since such a 
 polarized pair $((X,D),L)$ degenerates 
 to a toric Calabi-Yau pair by degenerating the blow up centers to 
 nodes in the toric boundary. Since $X$ has vanishing 
 irregularity, the polarization naturally preserves as well. 
 This forms a product test configuration which is obviously 
 plt-type. On the other hand, since toric polarized Calabi-Yau pair is 
 semi-log-canonical by the presence of toric log resolution 
 (cf., e.g. \cite{Ale1}), \cite[6.3]{OS} or Theorem\ref{OSrev} implies 
 the Donaldson-Futaki invariant vanishes. We conclude the proof of 
 \eqref{cluster}. 

\vspace{3mm}
Proof of \eqref{res}: First we prove weak K-polystability. 
Consider a plt type test configuration $((\X,\mathcal{D}),\mathcal{L})
$ of vanishing Donaldson-Futaki invariant. From \ref{tc.lc}, $\X_{0}$ is a 
log canonical place of 
$(X\times \A^{1},D\times \A^{1}+X\times \{0\})$ 
i.e., $A_{(X\times \A^{1},D\times \A^{1}+X\times \{0\})}(\X_{0})=0$. 
From a lemma due to Zariski (cf., \cite[2.45]{KM}), 
it follows that finite time blow up of $X\times \A^{1}$ at 
the log-canonical center of $\X_{0}$ realizes $\X_{0}$. 
We concretely trace such possibility: 
the irreducible component 
$D_{i}$ of the nodal degenerate fiber $D$ has negative self-
intersection as it is well-known (Hodge index theorem), hence 
we see that the strict transform of the irreducible component 
$D_{i}$ in the strict transform of $X(\times \{0\})$ is still 
negative. We write $X'$ the
other component than $X(\times \{0\})$ 
which includes $D_{i}$. 
From the d-semistability result (a.k.a. the triple point formula) 
we easily see that $D_{i}\subset X'$ has positive self intersection, 
hence not contractible. This would contradict unless 
the log canonical center of $\X_{0}$ is $X\times \{0\}$, hence 
weak K-polystability of $(X^{o},L^{o})$ follows.

Now we prove the strong open K-polystability. 
We regard $X$ as a blow up (with the morphism $\varphi\colon X\to X'$ and the exceptional $(-1)$-curve $E$) of DelPezzo surface $X'$ of 
degree $1$ at the base point of elliptic anticanonical linear system which we denote as $\pi$. 
Then, applying \cite{AP}, it follows that 
$X$ admits a polarization $L_{\epsilon}:=\varphi^*(-K_X')-\epsilon E$ for $\epsilon \ll 1$ 
such that $(X,L)$ has a corresponding 
cscK metric hence K-polystable by \cite{Stp}. 
Therefore, from Lemma~\ref{interm}, we obtain the 
strong open K-polystability of $(X^{o},L^{o})$. 

\vspace{3mm}
Proof of \eqref{n.} follows from Lemma~\ref{contract}. 
We would like to call this pair as being type E in \cite{Od20b}, 
to which we consult the details. 

\vspace{2mm}
Proof of \eqref{ruled}: 
The first item should be 
essentially known to birational geometry experts. 
From a classification result 
(cf., e.g., \cite[2.1]{FjnM}, \cite[5.3]{Gona}), 
it follows that $X'$ is indeed a $\mathbb{P}^{1}$-bundle over 
a klt log Calabi-Yau pair $(D_{i},{\rm Diff}_{D_{i}}(0))$. 
Since $(X,D)$ is a dlt log Calabi-Yau pair, as we assume 
always during Theorem\ref{ops.ex}, 
$(X,D)$ is plt around $D_{i}$. From Cartier assumption of 
$D_{i}$, if we consider the deformation to the normal 
cone with respect to $D_{i}\subset X$, we can contract 
the strict transform of the original central fiber 
$X\times \{0\}$ to get a log test configuration of 
$(X,D)$ degenerating to $\PP_{D_{i}}(\OO\oplus N_{D_{i}/X})$, 
whose log Donaldson-Futaki invariant vanishes. 
Therefore, the second statement on the weak open 
K-polystable case holds if we put $N=N_{D'_{i}/X'}$. 

Finally, for the third statement, 
we suppose $((X,D),L)$ is a stabilizing compactification. 
Then from \cite[5.23]{RT.DG} (also \cite{AT, AK}), 
it follows that $(X',L')$ has vanishing Futaki character 
if and only if $N$ is numerically trivial. 
\end{proof}

For the last claim on \eqref{ruled}, 
we have not been able to succeed to remove dimension assumption 
on $B$ at this point, but we nevertheless expect following 
more general claim would be algebraically proved eventually; 
connecting the phenomenon observed in birational geometry 
(\cite{FA, FjnM, Gona} etc) and the Cheeger-Gromoll splitting 
\cite{CG}. 

\begin{Conj}[Algebraic Cheeger-Gromoll]
If $((X,D),L)$ is open K-polystable and there are two 
connected components of ${\rm Supp}(\lfloor D \rfloor)$ 
(i.e., ``have two ends''), 
then there is a klt log Calabi-Yau variety $(B,D_{B})$ and 
a numerically trivial line bundle $N$ such that 
$X\simeq \mathbb{P}(\OO\oplus N)$, 
$D$ is the union of two natural sections, 
and a complete Ricci-flat weak K\"ahler metric on $X^{o}$ 
is complex analytically locally a product of lower dimension Ricci-flat 
weak K\"ahler metric and a flat metric. 
\end{Conj}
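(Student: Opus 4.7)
My plan is to extend Theorem~\ref{ops.ex}\eqref{ruled} from $1$-dimensional base $B$ to arbitrary dimension, using open K-polystability to rigidify the birational geometry, and then deducing the metric assertion from a Cheeger-Gromoll splitting. Write $\lfloor D \rfloor = D_1 \sqcup D_2$ by Remark~\ref{conn.compo}. Applying Remark~\ref{2compo}, after a log crepant birational modification $\mu\colon (X'', D'') \dashrightarrow (X, D)$, the pair $(X'', D'')$ admits a $\PP^1$-fibration $\pi\colon X'' \to B$ over a klt log Calabi-Yau pair $(B, D_B)$ with $D_1'', D_2''$ as two disjoint sections; adjunction on each $D_i$ in $(X, D)$ (plt along $D_i$) canonically identifies $(D_i, \mathrm{Diff}_{D_i}(0))$ with $(B, D_B)$, giving the first bullet of the ruled form in Theorem~\ref{ops.ex}\eqref{ruled}.

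The crux is to show that no log crepant modification is needed, i.e.\ $\mu$ is the identity. For each divisorial valuation $v$ realized by an exceptional divisor of $\mu$, log crepancy yields $v$ as a log canonical place of $(X, D)$. Starting from the trivial test configuration and running a relative MMP over $\PP^1$ in the spirit of the proof of Proposition~\ref{open.st2}, one should extract a plt-type log test configuration $((\X, \mathcal{D}), \mathcal{L})$ of $((X, D), L)$ whose central fiber valuation corresponds to $v$; its log Donaldson-Futaki invariant then vanishes by Proposition~\ref{tc.lc}. Open K-polystability together with Corollary~\ref{prod} forces product type, and Proposition~\ref{aut.prod2} then forces $v$ to arise from a one-parameter subgroup of $\mathrm{Aut}^o(X^o, L^o)$; but such subgroups on a $\PP^1$-bundle over a klt log Calabi-Yau base cannot extract divisors exceptional over $X$, so $\mu$ is the identity. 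Hence $X \simeq \PP_B(E)$ as a $\PP^1$-bundle with $D_1, D_2$ two disjoint sections, which algebraically forces $E \simeq \OO \oplus N$ for some line bundle $N$.

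To show $N$ is numerically trivial I follow the final argument of Theorem~\ref{ops.ex}\eqref{ruled}: \cite[5.23]{RT.DG}, combined with \cite{AT, AK} for higher-dimensional base, relates non-triviality of $N$ to non-vanishing of the classical Futaki character of $(\PP_B(\OO \oplus N), L)$. The balancing condition \eqref{balancing} from the stabilizing compactification forces the log Donaldson-Futaki invariant to vanish on all product log test configurations, which via the intersection formula \cite[Theorem 3.7]{OS} gives vanishing of the Futaki character; hence $N \equiv 0$. For the metric statement, the two boundary components provide two ends of $(X^o, g)$. Minimizing geodesics between diverging sequences $p_n$ and $q_n$ in the two ends, combined with Bishop-Gromov in the bounded volume growth regime, yield a subsequential limit that is a complete geodesic line in $(X^o, g)$. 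Cheeger-Gromoll \cite{CG} then produces a Riemannian splitting on the universal cover. In the K\"ahler setting the complex structure is holonomy-invariant, so the splitting is holomorphic; descending to $X^o$ gives the stated complex analytic local product of a lower-dimensional Ricci-flat K\"ahler metric and a flat metric along the fiber direction.

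The main obstacle is the rigidity step: producing, for a given log canonical place $v$ of $(X, D)$, an actual plt-type log test configuration of $((X, D), L)$ realizing $v$ on the central fiber. Running a relative MMP over $\PP^1$ does not automatically preserve plt-ness of the special fiber at intermediate stages, and the auxiliary divisor and polarization must be chosen so that one lands at a plt model in the end; this is essentially the plt-type analogue of the connectedness problem posed after Proposition~\ref{open.st2}, and is not known in full generality. A possible workaround is to restrict attention to the subclass of log canonical places coming from toroidal charts around the strata of $\lfloor D \rfloor$, where \cite[4.7, 4.8]{Amb} and \cite[\S 9]{Fjn} give explicit combinatorial control, in parallel with the toric argument for Theorem~\ref{ops.ex}\eqref{toric}.
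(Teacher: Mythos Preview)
The statement you are attempting to prove is presented in the paper as a \emph{Conjecture}, not a theorem; immediately before it the author writes that he has ``not been able to succeed to remove dimension assumption on $B$'' even for the weaker third bullet of Theorem~\ref{ops.ex}\eqref{ruled}, and records the full statement only as an expectation. So there is no proof in the paper to compare against.

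Your proposal has genuine gaps beyond the one you flag. In the rigidity step you conflate two classes of valuations: an exceptional divisor of $\mu$ is a log canonical place of $(X,D)$, whereas Proposition~\ref{tc.lc} characterises central fibers of plt-type test configurations with vanishing Donaldson--Futaki invariant as log canonical places of $(X\times\PP^{1},\, D\times\PP^{1}+X\times\{0\})$. Passing from the former to the latter requires a quasi-monomial combination with $\mathrm{ord}_{t=0}$, and then one must show the resulting valuation is dreamy with \emph{klt} central fiber---this is exactly the unresolved issue behind the connectedness problem stated after Proposition~\ref{tc.lc}, and is the heart of the matter rather than a technicality. Your appeal to \cite[5.23]{RT.DG} and \cite{AT,AK} for the Futaki-character computation over higher-dimensional $B$ is also precisely what the paper does \emph{not} claim: the footnote in Theorem~\ref{ops.ex}\eqref{ruled} restricts to $\dim B=1$ and defers the general case. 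Finally, the metric step needs more care: two ends are not by themselves enough to produce a geodesic line (one needs infinite distance between them and a suitable exhaustion), and the classical Cheeger--Gromoll theorem \cite{CG} is stated for smooth manifolds, whereas here $X^{o}$ is only klt and the metric is a \emph{weak} K\"ahler metric, so the Busemann-function argument and the holomorphic descent of the splitting require justification on the singular locus. These together are why the paper leaves the statement open.
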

Conversely, if $(B,D_{B}=0)$ is smooth, 
then regarding $X^{o}$ as a unitary local system of rank $1$ 
modulo the natural finite base change, we see the 
existence of complete Ricci-flat K\"ahler metrics on $X^{o}$ in 
the above form. 

\begin{Cor}[Classification for smooth surface case]
\label{surface.classify}
Open K-polystable (resp., weakly open K-polystable) 
smooth surfaces 
$X^{o}$ compactifiable in log smooth $(X,D)$ 
are classified as follows: 
\begin{enumerate}
\item $X^{o}$ is the $2$-dimensionasl algebraic torus 
(i.e., $(X,D)$ is a toric pair), or 
\item \label{ell.ruled}
the complement of an elliptic curve in $X$ or 
\item rank $1$ local system over elliptic curves 
(resp., holomorphic principle $\C^{*}$-bundle over elliptic 
curves). 
\end{enumerate}
\end{Cor}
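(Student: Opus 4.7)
The plan is to classify the possible configurations of $(X,D)$ combinatorially via adjunction and the Koll\'ar--Shokurov connectedness (Remark~\ref{conn.compo}), and then to apply the appropriate items of Theorem~\ref{ops.ex} to either certify polystability in the listed cases or to obstruct it.

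Since $(X,D)$ is log smooth with $K_X+D\equiv 0$, adjunction on each component $C\subset D$ gives $K_C+(D-C)|_C\equiv 0$. If $D$ is irreducible, then $K_C\equiv 0$ and $C$ is a smooth elliptic curve, which is case~(2); weak open K-polystability is certified by Theorem~\ref{ops.ex}~\eqref{.1}. More generally, evaluating the same relation at any valence-$1$ vertex of the dual graph of $D$ contributes only a single point from $(D-C)|_C$, which cannot balance $K_{\PP^1}\equiv -2p$; hence the dual graph of $D$ has no leaves, and each connected component of $D$ is either a smooth elliptic curve or a cycle of $\PP^1$'s. By Remark~\ref{conn.compo}, $\lfloor D\rfloor$ has at most two connected components, so only three configurations survive.

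If $D$ is a single cycle of $\ge 2$ rational curves, I would argue that $(X,D)$ is, after finitely many contractions of $(-1)$-curves meeting $D$ at single smooth points, a toric pair---i.e.\ $(X,D)$ is a cluster log surface in the sense of the paragraph preceding Theorem~\ref{ops.ex}~\eqref{cluster}. That item then shows any non-toric such pair is not weakly open K-polystable, so the hypothesis forces toricity and places $(X^o,L^o)$ into case~(1), with strong open K-polystability supplied by Theorem~\ref{ops.ex}~\eqref{toric}. If $\lfloor D\rfloor$ has two connected components, Remark~\ref{2compo} together with the Fujino--Mori birational classification produces a $\PP^1$-bundle $X'=\PP_B(\OO\oplus N)\to(B,D_B)$ over a $1$-dimensional klt log Calabi--Yau base with the $D_i$ mapping to the two sections, and Theorem~\ref{ops.ex}~\eqref{ruled} promotes this to an isomorphism $X\cong X'$ under weak open K-polystability (forcing each $D_i$ to be the single smooth curve $B$). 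The case $B\cong\PP^1$ requires $D_B\ne 0$ since $K_B\ne 0$, which forces $D$ to pick up vertical fiber components, sending us back to the cyclic (toric) case. The remaining possibility is $B$ elliptic with $D_B=0$, which is case~(3); under the stronger open K-polystability hypothesis the last bullet of Theorem~\ref{ops.ex}~\eqref{ruled} forces $N$ to be numerically trivial, so $(X^o,L^o)$ is a rank-$1$ local system (respectively, a holomorphic principal $\C^*$-bundle in the weak version) over the elliptic curve $B$.

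The main obstacle I expect is the reduction step in the cyclic-boundary case: proving that every smooth log Calabi--Yau surface with cyclic SNC boundary is a cluster log surface in the sense of the paragraph preceding Theorem~\ref{ops.ex}~\eqref{cluster}, so that the non-polystability statement there can be invoked. This requires a minimal-model-type analysis on smooth rational surfaces equipped with a cyclic anticanonical boundary, together with identification of a sufficient supply of interior $(-1)$-curves meeting the boundary at single smooth points that can be contracted to descend to a toric minimal model. A secondary subtlety in the two-component case is ruling out mixed cycle/elliptic or cycle/cycle configurations of $\lfloor D\rfloor$ prior to invoking Theorem~\ref{ops.ex}~\eqref{ruled}; these are handled by the same componentwise adjunction argument, together with the observation that weak polystability forces the section $D_i$ in the $\PP^1$-bundle model to already equal the smooth curve $B$.
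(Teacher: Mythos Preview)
Your overall strategy matches the paper's: split by connectedness of $D$, dispatch the two-component case via the $\PP^1$-bundle structure (the paper says this ``is easy to show'' and gives no further detail, so your use of Theorem~\ref{ops.ex}~\eqref{ruled} is a reasonable elaboration), and in the connected nodal-boundary (Looijenga) case invoke Theorem~\ref{ops.ex}~\eqref{cluster}.

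The gap you flag is real, and the paper fills it by citing \cite[Proposition~1.3]{GHK}. However, the precise statement differs from what you propose: it is \emph{not} true that every Looijenga pair $(X,D)$ contracts to a toric pair via interior blowdowns alone. For instance, cluster log surfaces inherit the number of boundary components from their toric model, hence have at least three; a Looijenga pair such as $(\PP^2,\text{conic}+\text{line})$ with a cycle of length two is therefore not a cluster log surface. What \cite[Proposition~1.3]{GHK} actually provides is that after finitely many \emph{corner} blowups (blowing up nodes of $D$), the resulting pair $(\tilde X,\tilde D)$ admits a toric model via interior blowdowns. Since corner blowups do not touch $X^o$, one has $\tilde X^o=X^o$, and Theorem~\ref{ops.ex}~\eqref{cluster} applied to $(\tilde X,\tilde D)$ then rules out weak open K-polystability of $(X^o,L^o)$ whenever at least one interior blowdown is needed; if none is needed, $X^o$ is a torus. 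So your proposed mechanism of ``finding enough interior $(-1)$-curves on $(X,D)$'' should be replaced by ``pass to corner blowups first, then contract''.

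A minor simplification in the two-component case: in the log-smooth setting with $D$ reduced and $D_1,D_2$ disjoint smooth curves, adjunction gives $K_{D_i}=(K_X+D)|_{D_i}=0$ directly, so each $D_i$ is elliptic and the base $B$ is forced to be elliptic without the detour through $B\cong\PP^1$.
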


\begin{proof}
Recall that $D$ is connected or otherwise 
consists of two isomorphic disjoint sections of a ruled surface 
structure (Remark \ref{2compo}). 
The latter case is reduced to above \eqref{ell.ruled} 
which is easy to show. 

After log resolution, we can assume $(X,D)$ is 
either of type \eqref{..1}, i.e., $D$ is smooth, or 
$D$ is nodal i.e., so-called Looijenga pair 
first studied systematically in \cite{Looij}. 
Now, the assertion 
follows from Theorem\ref{ops.ex} \eqref{toric}, \eqref{cluster} 
since \cite[Proposition 1.3]{GHK} shows that 
after finite times composition of blow ups of nodes of the 
boundary curve $D$, 
the surface admits cluster surface structure. 
\end{proof}

\vspace{5mm}
From now on, we examine some known 
gravitational instantons and show their presence (and some 
non-existence) match to 
above Theorem~\ref{ops.ex}. 

\begin{Ex}
For \eqref{res}, this matches to the H-J.Hein's gravitational instanton  \cite{Hajo} 
as well as recent result of \cite[5.22, 6.2]{CJL} 
which relates it with 
Tian-Yau metric \cite{TY} via hyperK\"ahler rotation. Also recall 
G.Chen 
(\cite[Theorem 1.5]{GChen} or \cite[1.2]{CCI})
shows that ALG space $X^{o}$ with curvature decay 
{\it faster than 
quadratic order} can be compactified to a rational elliptic surface 
$X$ and its complement $D$ is its 
singular nodal fiber. In particular, we expect our \eqref{res} 
holds for more general polarization $L$. 
\end{Ex}

\begin{Ex}
An easy typical example of \eqref{logcm} is when $X$ is 
the projective plane $\PP^{2}$. Then weakly open K-polystability of 
$(X,D)$ implies that $D$ is either smooth or union of three 
mutually transversal lines which fits to \ref{..1} and \ref{toric},  
but can not be the union of conic and line. 
\end{Ex}

\begin{Ex}
\cite{TY} matches to above 
Theorem \ref{ops.ex} \eqref{.1app} well. 
Indeed, recall that 
smooth $X$ with smooth $D$ form the most typical plt pair. 
In the construction of Tian-Yau \cite[4.1]{TY} of complete Ricci-flat K\"ahler metric 
on $X^{o}$, they assume  
$D$ is (almost) ample i.e., $X$ is Fano, as the construction 
starts with positive curvature hermitian metric on 
$\mathcal{O}_{X}(D)$ as a source of reference metric.

\end{Ex}

\begin{Ex}
\cite{ACyln} treats the case when 
$X$ admits certain kind of cyclic quotient singularity 
while $D(\supset {\rm Sing}(X))$ is also a 
cyclic finite quotient of a Calabi-Yau manifold. 
Since the finite cyclic group action, ``$\iota$'' in 
{\it loc.cit}, 
do not have fixing divisor, 
$(X,D)$ is plt and as they show the adjunction holds in 
this case. Therefore, the existence result \cite[Theorem D]{ACyln} 
of complete Ricci-flat K\"ahler metric of asymptotically 
cyrindrical type (``ACyl'') perfectly fits as example of above 
condition in Theorem \ref{ops.ex} \eqref{.1}. 

As $3$-dimensional special case, there is a work of 
\cite[see especially 2.6, 4.24, 4.25]{ACyl3} 
when $X$ is certain kind of smooth weak Fano manifold, 
which are then applied to construction of compact $G_{2}$ manifolds 
(\cite{G2}). 
\end{Ex}

We end this subsection by algebraically showing the 
following analogue of Matsushima reductivity theorem 
(\cite{Mat.met}) by reducing to another theorem of 
Y.~Matsushima (\cite{Mat.aff}) on homogeneous spaces! 
However, note that the result is partial for now due to 
reductivity assumption of the ambient symmetry ${\rm Aut}(X,L)$. 

\begin{Cor}[Matsushima-type theorem]
Consider a weakly open K-polystable polarized log Calabi-Yau pair $((X,D),L)$. 
If we assume reductivity of 
${\rm Aut}(X,L)$, then 
${\rm Aut}((X,D),L)$ is also reductive. 
\end{Cor}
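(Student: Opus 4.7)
The plan is to reduce the statement to Matsushima's theorem on affine homogeneous spaces \cite{Mat.aff}, which asserts that a closed subgroup $H$ of a reductive algebraic group $G$ is itself reductive if and only if $G/H$ is affine. Setting $G := \Aut(X,L)$ (reductive by assumption) and $H := \Aut((X,D),L) \subset G$, the task therefore becomes the realization of the coset space $G/H$ as a $G$-orbit that is closed inside an affine $G$-variety.

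I would take $m \gg 1$ so that $mD$ is Cartier with $mD \sim -mK_X$, and consider the parameter space $B_X := \PP(H^{0}(X,-mK_X))$ carrying the natural $G$-linearized ample line bundle $\OO_{B_X}(1)$. The point $[mD] \in B_X$ has stabilizer $H$ in $G$, since an automorphism of $(X,L)$ sends the line spanned by a defining section to itself precisely when it preserves the divisor $D$. To go from K-polystability to GIT polystability of $[mD]$, I would adapt the argument of Theorem~\ref{ops.ex}~\eqref{logcm}: any one-parameter subgroup $\lambda \colon \G_m \to G$ produces a product log test configuration of $((X,D),L)$ which is automatically of plt type (since $G$ preserves the klt locus $X^o$), whose log Donaldson-Futaki invariant equals the Hilbert-Mumford weight $\mu([mD],\lambda)$ of the log CM line bundle up to a strictly positive multiplicative constant (cf.\ \cite{PT}, \cite[\S 3.2]{OSS}, \cite[2.22]{ADL}). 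Hence weak open K-polystability forces the implication: if $\mu([mD],\lambda)=0$ then $\lambda \subset H$.

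By the Hilbert-Mumford-Luna criterion (equivalently Kempf-Ness), this numerical vanishing condition on one-parameter subgroups is exactly what ensures the orbit $G \cdot [mD]$ is closed in the GIT semistable locus of $B_X$, or equivalently that a lift $v \in H^{0}(X,-mK_X)$ of $[mD]$ has closed $G$-orbit inside the affine space $H^{0}(X,-mK_X)$. The stabilizer of such a lift is either $H$ itself or the kernel of a character $H \to \G_m$, and reductivity of either is equivalent to reductivity of the other. Matsushima's theorem applied to the closed affine homogeneous space $G/\mathrm{Stab}_G(v)$ then yields reductivity of $\mathrm{Stab}_G(v)$, which transfers back to $H$, completing the argument.

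The main technical point is the translation step between weak open K-polystability and GIT polystability of $[mD]$: the semistable direction is essentially contained in Theorem~\ref{ops.ex}~\eqref{logcm}, so the substantive content is the upgrade to orbit closedness. Fortunately this is secured precisely by the Hilbert-Mumford-Luna criterion, because GIT polystability only needs to be tested against one-parameter subgroups of $G$, which produce precisely the plt-type \emph{product} log test configurations to which Definition~\ref{ops} directly applies. A minor bookkeeping issue is the comparison between stabilizers in $B_X$ and in its affine cone, which is harmless once one observes that the two differ by at most a subgroup of $\G_m$.
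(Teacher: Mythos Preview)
Your proposal is correct and follows essentially the same route as the paper: deduce GIT polystability of $[D]$ in $\PP(H^0(-mK_X))$ from weak open K-polystability via the log CM weight comparison (this is exactly Theorem~\ref{ops.ex}~\eqref{logcm}), then apply Matsushima's theorem \cite{Mat.aff} to the resulting closed orbit in an affine $G$-variety. The only difference is cosmetic: the paper simply cites \eqref{logcm} and passes directly to the closed orbit $G\cdot D$ inside a $G$-invariant affine open of $B_X$, whereas you unwind the Hilbert--Mumford step explicitly and route through a lift $v$ in the affine cone, handling the $\G_m$-ambiguity between $\mathrm{Stab}_G(v)$ and $H$ by the kernel-of-a-character remark. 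Both arguments are the same in substance.
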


\begin{proof}
From the previous Theorem\ref{ops.ex} \eqref{logcm}, 
we see that $D\in |-K_{X}|$ is GIT polystable 
with respect to the natural action of ${\rm Aut}(X,L)$. 
Therefore, its  ${\rm Aut}(X,L)$-orbit is closed in 
an affine subset hence so is affine. 
Thus, \cite{Mat.aff} implies the isotropy of 
$D$ is reductive which is nothing but ${\rm Aut}((X,D),L)$. 
\end{proof}


\section{Compactness and stable reduction}\label{st.red.sec}

\subsection{Introductory remarks}
An important piece of general construction of compact or proper 
moduli space is to establish the formal procedure of constructing 
canonical limits of a sequence or a punctured family of 
objects in concern. In algebraic geometry, 
it is formulated as the 
so-called valuative criterion of properness. 
The purpose of this section is to make a progress in 
the case of moduli of polarized Calabi-Yau varieties $(X,L)$s, 
by following similar idea to the classical Jordan-Holder filtrations. 
With differential geometric perspective, another purpose is to also 
understand the bubbling phenomena of maximally degenerating 
Ricci-flat K\"ahler metrics. 

First we observe that 
flops can cause serious non-separatedness of moduli, 
even in polarized setting, if we do not put any 
stability assumption, which also partially motivates our study. 

\begin{Ex}[Flops cause {\bf un-}separatedness, even with 
polarizations, without polystability conditions]
If has been well-known that families of 
Calabi-Yau varieties can flop, which causes unseparatedness 
(non-Hausdorff properties) of moduli. 

Here, we clarify and enhance the meaning 
by seeing such examples even under the presence of polarizations 
for convenience of readers, as the author could not find literature. 

Consider the well-studied family in $2$-dimensional case 
$\mathcal{X}\to \A^{1}_{t}$ as 
$\X_{t}=[xyzw+tF_{4}(x,y,z,w)]\subset \PP^{3}_{x,y,z,w}$, 
for general $F_{4}\in H^{0}(\PP^{3},\mathcal{O}(4))$. 
There are four $A_{1}$-singularities (conifold points) 
$p_{1},\cdots,p_{4}$ 
on $x=y=t=0$ inside $\X_{0}$ 
which is intersection of the projective plane $V_{1}=(t=x=0)$ 
and 
another projective plane $V_{2}=(t=y=0)$. To be precise, 
$p_{i}$s are zeroes of $F_{4}(0,0,z,w)$. 
(More generally, the total space of generically 
smooth hypersurfaces degeneration $\X=\{tF+G=0\}$ is 
$V(F)\cap {\rm Sing}(\X_{0})$. 
If we blow up $V_{1}$ (resp., $V_{2}$) in $\X$, then we obtain a different 
polarized model $\X_{2}$ (resp., $\X_{1}$). 
On the other hand, 
if we blow up $p_{1}$, then we get yet another model 
$\tilde{\X}$ which dominates both $\X_{i}$s. 
The exceptional divisor of $\tilde{\X}\to \X$ 
is $E\simeq \PP^{1}\times \PP^{1}$ and since $\tilde{X}$ is smooth 
and no critical point generically inside $E$, 
a local section $s(t)\colon \Delta\to \tilde{\X}$ converging to a general point in $E$, the image section inside $\X_{i}$ converges to 
$V_{i}(\subset \X_{0})$ as $t\to 0$,  
whose open locus is not isomorphic. In particular, 
if we simply consider the moduli functor of flat families of 
pointed open polarized Calabi-Yau varieties (``$(X^{o},L^{o})$''s), 
it is not separated. 
\end{Ex}

Another motivation for the following compactness type results 
come from the desire to 
understand more local behavior of some special 
maximally degenerating Calabi-Yau 
metrics. 

More precisely, we concern the pointed Gromov-Hausdorff limits of 
Ricci-flat K\"ahler manifolds with minimal non-collapsing rescale in the following sense, somewhat analogous to the notions of 
``regularity scales'' although not quite identical. 
\begin{Def}[Minimal non-collapsing rescale]\label{minimal.noncollapsing}
A polarized flat ($\Q$-Gorenstein) punctured holomorphic family $\pi^*\colon (\mathcal{X}^*,\mathcal{L}^*)\to \Delta^*:=\Delta\setminus \{0\}$ 
of $n$-dimensional polarized klt projective varieties with continuous family of 
K\"ahler metrics $g_t$ on $\mathcal{X}_t$ whose K\"ahler class is $c_1(\mathcal{L}_t)$, 
take a sequence $p_i\in \mathcal{X}_{t_i} (i=1,2,\cdots)$ such that $t_i (\neq 0) \to 0$ for $i\to \infty$. 
Here, $\mathcal{X}_t:=(\pi^*)^{-1}(t)$ and $\mathcal{L}_t:=\mathcal{L}^*|_{\mathcal{X}_t}$ as usual. 
A sequence of real numbers $r_i$ is called {\it minimal non-collapsing order} if 
$(p_i\in \mathcal{X}_{t_i}, r_i g_{t_i})$ has non-collapsing\footnote{in 
the sense of e.g., \cite{DS2}}
 pointed Gromov-Hausdorff limit as Ricci-flat weak (klt) K\"ahler spaces 
but for any $\epsilon_i>0 (i=1,2,\cdots)$ with $\lim_i \epsilon_i=0$, 
$(p_i\in \mathcal{X}_{t_i}, \epsilon_i r_i g_{t_i})$ does not have such limit. 
\end{Def}
What concerns for us is the natural equivalence class of the sequence 
$\{r_{i}\}_{i}$ by $\sim$ where 
$$\{r_{i}\}_{i}\sim \{r'_{i}\}_{i},$$
simply means $\{\log \frac{r'_{i}}{r_{i}}\}_{i}$ 
is bounded from both sides. 
\vspace{2mm}

Particularly, we are interested in the case when $p_i$ moves along a meromorphic section over $\Delta^*$, 
$K_{\mathcal{X}^*/\Delta^*}\sim_{\Q} 0$, and $g_t$ are Ricci-flat K\"ahler. 
For instance, for degeneration of polarized elliptic curves with standard flat metrics, 
the minimal non-collapsing limit is a cylinder $\C/\mathbb{Z}\simeq \C^*$ with a standard flat metric. The rescaling parameter $r_{t}$ simply makes the 
injectivity radius of $r_{t}g_{t}$ bounded. 
Now, we put an expectation which is 
inspired by \cite{DS2}, which in particular 
gives partial confirmation for smooth case, 
other than the metric completeness. 
\begin{Conj}[Existence of minimally non-collapsing limits]\label{min.nc}
In the above situation, there is a unique equivalence class of 
minimal non-collapsing order. Furthermore, for sequences of points 
$p_i\in \mathcal{X}_{t_i}$ with $t_i (\neq 0) \to 0$ which are 
general enough in a certain 
sense, the pointed Gromov-Hausdorff limits are complete 
Ricci-flat (weak) K\"ahler space. 
\end{Conj}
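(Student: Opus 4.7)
The plan is to prove uniqueness of the equivalence class first, then to set up existence of the minimal non-collapsing limit together with its K\"ahler-analytic structure, and finally to make precise the ``general enough'' condition.

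Uniqueness follows essentially from the definition. If $\{r_i\}$ and $\{r'_i\}$ are both minimal non-collapsing orders and, passing to a subsequence, $\lambda_i := r'_i/r_i \to 0$, then choosing $\epsilon_i := \lambda_i$ in the minimality clause of Definition~\ref{minimal.noncollapsing} applied to $\{r_i\}$ forces $(p_i, \epsilon_i r_i g_{t_i}) = (p_i, r'_i g_{t_i})$ to fail to have a non-collapsing pointed Gromov-Hausdorff limit, contradicting the non-collapsing hypothesis on $\{r'_i\}$. The symmetric case $\lambda_i \to \infty$ is analogous, so $\log(r'_i/r_i)$ must stay bounded along every subsequence.

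For existence of such a rate at a sequence $p_i$, I would parametrize rescalings by a fixed small volume-ratio threshold $\kappa$ much smaller than $\omega_{2n}$. By Bishop-Gromov monotonicity, which is available since the $g_t$ are Ricci-flat, the normalized volume $\mathrm{vol}_{r g_{t_i}}(B(p_i,1))$ is monotone in $r$, and the scale at which it first drops below $\kappa$ as $r$ decreases provides a well-defined candidate $r_i$. The Cheeger-Colding-Tian structure theorem for non-collapsed Ricci-limit spaces then produces, along a subsequence, a pointed Gromov-Hausdorff limit $(X_\infty, p_\infty, d_\infty)$ which is complete as a limit of complete metric spaces and which carries a smooth Ricci-flat K\"ahler metric on its regular part, with singular set of Hausdorff codimension at least four. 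To upgrade this to a weak K\"ahler variety in the sense used elsewhere in the paper, I would adapt Donaldson-Sun's partial Bergman kernel machinery to the rescaled polarizations, obtaining projective embeddings that pass to the limit; the ``general enough'' condition on $p_i$ should then be formulated by passing to a semistable reduction and a dlt modification of the central fiber and requiring that the valuations at $p_i$ tropicalize to interior points of top-dimensional faces of the dual intersection complex $\Delta(\mathcal{X}_0)$, on which the bubbles are expected to be isolated and nondegenerate.

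The main obstacle is exporting Donaldson-Sun's partial $C^0$ estimate and two-step algebraization to this non-compact, degenerating setting. The polarization $\mathcal{L}_{t_i}$ itself can become numerically trivial after rescaling, so one must instead track a carefully chosen modified polarization built from boundary divisors of a stabilizing compactification in the sense of Definition~\ref{ops2}, which is one of the reasons for the $\mathcal{L}(\frac{c}{\beta}\lfloor \mathcal{D} \rfloor)$ adjustment appearing there. A further, more structural difficulty is that for non-generic sequences $p_i$ the limit can develop incomplete ends or a nested bubble tree, so even formulating the correct notion of genericity precisely, and verifying that its complement is meager, is itself part of the work; this connects naturally to the stable reduction picture of Section~\ref{st.red.sec} and to Conjecture~\ref{asym.intro} on volume growth.
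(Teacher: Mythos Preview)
The statement you are trying to prove is labeled in the paper as a \emph{Conjecture}, not a theorem or proposition; the paper offers no proof whatsoever. Immediately before stating it, the author writes that it is ``an expectation which is inspired by \cite{DS2}, which in particular gives partial confirmation for smooth case, other than the metric completeness,'' and the text surrounding it (the ``genericity'' caveat, the subsequent Question~\ref{max.degen.lim.conj}, and Caution~\ref{Caution}) makes clear that this is an open problem the author is posing, not resolving. So there is nothing in the paper to compare your attempt against.

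As for the content of your sketch: the uniqueness paragraph is essentially sound modulo a small subsequence bookkeeping issue (you pass to a subsequence to get $\lambda_i \to 0$, but the minimality clause in Definition~\ref{minimal.noncollapsing} is phrased for full sequences $\epsilon_i \to 0$; you should say a word about why this suffices). The existence part, however, is not a proof but a program, and you yourself flag this when you write ``The main obstacle is exporting Donaldson--Sun's partial $C^0$ estimate\ldots''. In particular, your Bishop--Gromov threshold scale $r_i$ produces \emph{some} non-collapsing subsequential limit, but you have not argued that it is \emph{minimal} in the sense of the definition, nor that the limit is complete (the paper explicitly singles out metric completeness as the missing ingredient even in the smooth case covered by \cite{DS2}). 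Your proposed genericity condition via tropicalization to interior points of top-dimensional faces is a reasonable guess and is indeed in the spirit of \S\ref{st.red.sec}, but it remains a guess; the paper itself declines to make the phrase ``general enough in a certain sense'' precise. In short: your write-up is a plausible outline of how one might eventually attack the conjecture, but it is not a proof, and the paper does not claim one either.
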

Here, the ``generality'' condition should be necessary since 
for some sequences, we expect e.g., (multi-)Taub-NUT metrics as the occuring limits 
as \cite{HSVZ} for instance. 

We further put an inprecise expectation as it actually 
served as one of the motivations of our open K-polystability notion. 

\begin{Ques}[Special maximal degeneration case]\label{max.degen.lim.conj}
We expect that for certainly special maximally degenerating families 
of polarized Calabi-Yau varieties, 
the minimal non-collapsing 
pointed Gromov-Hausdorff limit in Conjecture \ref{min.nc} 
are nothing but the open strata of 
open polystable dlt model reductions. 
Characterize when this is true. 
\end{Ques}
However we observe the following. 
\begin{CauCon}\label{Caution}
Note that minimal non-collapsing limits do not respect product 
so that the above expectation 
\ref{max.degen.lim.conj} can not be extended to 
general degenerations. 

To provide a simple counterexample, first we take $(\X^{*},\mathcal{L}^{*})\to \Delta^{*}$ which 
satisfies affirmatively the above Question 
\ref{max.degen.lim.conj}. 
Then, if we take a fixed pointed 
polarized $m$-dimensional Calabi-Yau variety 
$s'\in (S,M)$ and consider 
$(s(t),s')\in 
(\X\times S, p_{1}^{*}\mathcal{L}\otimes p_{2}^{*}M)\to \Delta^{*}$, 
then we expect the minimal non-collapsing limit of the 
fiber Ricci-flat K\"ahler metrics on the fiber $\X_{t}\times S$ 
to be the isometric to the metric product of the following two 
metric spaces. One is the minimal non-collapsing pointed 
Gromov-Hausdorff limit of $\X_{t}\ni s(t)$ with respect to the 
original Ricci-flat K\"ahler metrics, 
and the other is the {\it (metric) tangent cone of} $S$ at $s'$ 
rather than $S$ itself. 

Indeed, this is easily verified in the case of 
degenerating polarized abelian varieties as \cite{TGC.II}. 
\end{CauCon}
Furthermore, by the concrete analysis in \cite{TGC.II}, 
even some maximal degenerations of polarized abelian varieties 
can have negative answers to Question \ref{max.degen.lim.conj}. 
However, certainly good ``balancing'' class of 
maximal degenerations do exist, of which the answer to Question \ref{max.degen.lim.conj} are affirmative. We leave the details to 
future papers. 

\subsection{Weak open stable reduction}

We used the notion of the dlt minimal models for flat family of Calabi-Yau varieties in the above conjecture \ref{max.degen.lim.conj}. 
It is obtained the relative minimal model program over the base curve 
(see \cite{Fjn.ss}) 
gives a powerful method of constructing 
degenerate Calabi-Yau varieties still with trivial 
canonical divisor, in a certain sense. 
A good news is that, as the name suggests, the singularities in such 
degeneration are controlled to be mild (semi-dlt), 
but the filling is rather far from unique. As we mentioned, 
\cite[6.3]{OS} (Theorem ~\ref{OSrev}) 
ensures their log K-semistability but never satisfies 
log K-polystability, hence it does not help to obtain uniqueness. 
This section aims to try to fix the problem by 
restricting the class of such occuring minimal degenerations 
further by imposing our 
our new {\it poly-}stability notions in \S \ref{stab.sec}, 
to obtain (unique and canonical) 
stable reduction type results. 

We work in the setting of 
flat proper family over $\Delta\ni 0$, a germ of 
smooth algebraic curve. Nevertheless, 
we expect the same holds for analytic or formal germ 
if we replace the use of semistable MMP \cite{Fjn.ss} 
by its technical extension to formal equicharacteristic 
setting (cf., e.g., \cite{HP, NKX}). 
We often denote $\Delta\setminus \{0\}$ as 
$\Delta^{*}$. The superscript $^{*}$ denotes for 
punctured setting i.e., 
away from central fiber as boundary, 
while the superscript $^{o}$ means 
outside the horizontal 
boundary ${\rm Supp}(\lfloor \mathcal{D} \rfloor)$. 
In this section, to avoid technical difficulties of dealing with 
degenerations of boundary divisors (cf., e.g., \cite{Kol}), 
we suppose $\mathcal{D}$ is reduced i.e., all coefficients being 
$1$. 

\begin{Thm}[Weak open polystable reduction, Type I case]\label{wstr1}
Consider a flat $\Q$-Gorenstein family of open polarized 
Calabi-Yau varieties (recall Definition~\ref{CY.notation} \eqref{oCY}) 
which we denote by 
$(\mathcal{X}^{*,o},\mathcal{L}^{*,o})\twoheadrightarrow  
\Delta^{*}=\Delta\setminus \{0\}$ 
i.e., both horizontaliry and vertically compactifiable to 
$((\mathcal{X},\mathcal{D}),\mathcal{L})
\twoheadrightarrow \Delta$ 
which is a family of log dlt Calabi-Yau varieties such that 
\begin{itemize}
\item All components of $\mathcal{D}$ have coefficients $1$ and 
are horizontal 
i.e., maps to $\Delta$ dominantly, 
\item 
$\mathcal{X}^{*}\setminus {\rm Supp}(\lfloor \mathcal{D}^{*}\rfloor) 
=\mathcal{X}^{*,o}$, 
\item 
$\mathcal{L}^{*}|_{\mathcal{X}^{*,o}}=\mathcal{L}^{*,o}$, 
\item and 
$(\mathcal{X},\mathcal{X}_{0})$ is of plt-type 
i.e., the open 
central fiber pair 
$(\mathcal{X}^{o}_{0},\mathcal{D}-\lfloor \mathcal{D} 
\rfloor |_{\mathcal{X}^{o}_{0}})$ is klt 
(we call ``Type I'' condition\footnote{after the Kulikov-Pinkham-Persson classification of log smooth minimal 
degenerations of K3 surfaces. This roughly means that 
the pair does not degenerate much.}). 
\end{itemize}
Then, possibly after a finite base change of $\Delta\ni 0$, 
we can perform a birational transform along the fiber over $0$, 
to obtain a priori new 
$((\mathcal{X},\mathcal{D}),\mathcal{L})
\twoheadrightarrow \Delta$ 
such that again 
\begin{itemize}
\item All components of $\mathcal{D}$ have coefficients $1$ and 
are horizontal 
i.e., maps to $\Delta$ dominantly, 
\item 
$\mathcal{X}^{*}\setminus {\rm Supp}(\lfloor \mathcal{D}^{*}\rfloor) 
=\mathcal{X}^{*,o}$, 
\item 
$\mathcal{L}^{*}|_{\mathcal{X}^{*,o}}=\mathcal{L}^{*,o}$, 
\end{itemize}
and further that, most importantly,  
\begin{itemize}
\item 
The central fiber $((\X_{0},\mathcal{D}|_{\X_{0}}),\mathcal{L}|_{\X_{0}})$ 
does not admit a plt type log test configuration which is not a 
product log test configuration. 
In particular, 
the open central fiber 
$((\mathcal{X}^{o}_{0},\mathcal{D}-\lfloor \mathcal{D} 
\rfloor |_{\mathcal{X}^{o}_{0}}),
\mathcal{L}|_{\mathcal{X}^{o}_{0}})$ is weakly open 
K-polystable klt open Calabi-Yau polarized variety in the sense of 
Definition~\ref{ops4} (i). 
\end{itemize}

\end{Thm}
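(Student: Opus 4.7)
The plan is to iteratively extract destabilizing log canonical valuations on the central fiber via the relative minimal model program over $\Delta$, in the spirit of the K-stable reduction for Fano varieties (\cite{LX}, \cite{HX}, \cite{HH}). If $((\mathcal{X}_0, \mathcal{D}|_{\mathcal{X}_0}), \mathcal{L}|_{\mathcal{X}_0})$ is already weakly open K-polystable we are done; otherwise I construct a birational modification along the central fiber producing a ``more polystable'' central fiber, and argue that the process terminates.

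\emph{Single extraction step.} Pick a non-product plt-type log test configuration $((\mathcal{Y},\mathcal{D}_\mathcal{Y}), \mathcal{L}_\mathcal{Y})\to\PP^1$ of $((\mathcal{X}_0, \mathcal{D}|_{\mathcal{X}_0}), \mathcal{L}|_{\mathcal{X}_0})$ with vanishing log Donaldson--Futaki invariant. By Proposition~\ref{tc.lc} it is determined by a log canonical divisorial valuation $v$ of $(\mathcal{X}_0 \times \A^1,\, \mathcal{D}|_{\mathcal{X}_0} \times \A^1 + \mathcal{X}_0 \times \{0\})$. The Type~I hypothesis makes $(\mathcal{X},\mathcal{X}_0)$ plt, hence the formal neighborhood of $\mathcal{X}_0$ in $\mathcal{X}$ is log crepantly equivalent to that of $\mathcal{X}_0 \times \{0\}$ in $\mathcal{X}_0 \times \A^1$; thus, after a finite base change $\Delta'\to\Delta$ absorbing the ramification index of $v$ along $\mathcal{X}_0 \times \{0\}$, $v$ lifts to a divisorial valuation $\tilde v$ of $(\mathcal{X}_{\Delta'}, \mathcal{D}_{\Delta'} + (\mathcal{X}_0)_{\Delta'})$ of vanishing log discrepancy. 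Extract $\tilde v$ on a log resolution and then run a $(K+\mathcal{D})$-trivial MMP over $\Delta'$ with an ample scaling, using \cite{BCHM} and the dlt/semistable variants in \cite{HX}, \cite{Fjn.ss}; contracting the strict transform of the old central fiber together with auxiliary exceptional divisors yields a new $\Q$-Gorenstein dlt family $(\tilde{\mathcal{X}}, \tilde{\mathcal{D}}, \tilde{\mathcal{L}}) \to \Delta'$, still of Type~I by construction, satisfying the first three bulleted properties, whose open central fiber is isomorphic to $\mathcal{Y}_0 \setminus \lfloor \mathcal{D}_\mathcal{Y}|_{\mathcal{Y}_0}\rfloor$.

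\emph{Termination and base change.} I would argue that iterating the single step stops after finitely many iterations. Each extraction realizes as a prime divisor a log canonical place of the ambient dlt pair $(\mathcal{X}, \mathcal{D}+\mathcal{X}_0)$; by the lemma of Zariski \cite[2.45]{KM} together with the dlt property of $(\mathcal{X}, \mathcal{D}+\mathcal{X}_0)$, there are only finitely many such log canonical places (cf.\ also Lemma~\ref{crep.class}). Since at each step the extracted place strictly enlarges the set of already realized places --- a place once extracted cannot serve as a non-product destabilizer of the subsequent central fiber, since the corresponding log test configuration of that central fiber would be product type by construction --- the process terminates after at most finitely many steps. The accumulated base change is a composition of finitely many finite base changes, hence itself finite.

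\emph{Main obstacle.} The hardest point is the monotonicity claim in the termination argument: rigorously verifying that a log canonical place of the ambient dlt pair once realized as a divisor in some intermediate model cannot re-appear as a non-product destabilizing place of any later central fiber. This requires careful bookkeeping of how log canonical places of the ambient dlt pair over $\Delta'$ correspond under the iterated MMP to log canonical valuations characterizing plt-type TCs with vanishing DF of each intermediate central fiber (via repeated application of Proposition~\ref{tc.lc}), and is the Calabi--Yau analogue of the central difficulty in the K-semistable reduction for Fanos of \cite{LX}, \cite{HX}. I also need to verify that the plt-type (Type~I) property of the family is preserved at each step, which follows because the MMP step extracts a single log canonical place and contracts divisors in a manner that respects the plt characterization of the new central fiber inherited from the plt-type hypothesis on the chosen log TC.
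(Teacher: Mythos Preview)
Your termination argument has a genuine gap. You claim that the dlt property of $(\mathcal{X}, \mathcal{D}+\mathcal{X}_0)$ together with \cite[2.45]{KM} gives only \emph{finitely} many log canonical places, but this is false: a dlt pair with a positive-dimensional log canonical center (e.g.\ any toric pair) has countably infinitely many log canonical places. Lemma~\ref{crep.class}, which you cite, asserts only countability, not finiteness. Moreover, after each base change and MMP the ambient total space changes, so there is no fixed finite pool of places to exhaust; your monotonicity bookkeeping would have to compare log canonical places across different birational models over different bases, and you give no invariant doing that.

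The paper's termination is entirely different and does not track log canonical places. One first arranges, via Lemmas~\ref{equiv} and \ref{equiv2}, that the destabilizing plt-type test configuration is ${\rm Aut}^o(\mathcal{X}_0^o,\mathcal{L}|_{\mathcal{X}_0^o})$-equivariant. After the replacement the new open central fiber carries, in addition to the old automorphisms, the $\mathbb{G}_m$-action coming from the test configuration itself; since the test configuration was non-product this $\mathbb{G}_m$ is not in the old ${\rm Aut}^o$, so the \emph{rank} of ${\rm Aut}^o$ of the open central fiber strictly increases. This rank is bounded by $\dim\mathcal{X}_0$, so the process stops; in the extreme case the open central fiber is an algebraic torus, handled by Theorem~\ref{ops.ex}\eqref{toric}.

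Your single-step construction also diverges from the paper and carries its own problem: the claim that the formal neighborhood of $\mathcal{X}_0$ in $\mathcal{X}$ is log crepantly equivalent to $\mathcal{X}_0\times\{0\}\subset\mathcal{X}_0\times\A^1$ is formal triviality of the deformation, which is not assumed and generally false. The paper sidesteps this by embedding both the family over $C\supset\Delta$ and the test configuration over $\PP^1$ into a common Hilbert-scheme incidence locus, resolving the indeterminacy of the resulting rational map $\PP^1\times C\dashrightarrow H$, and then splicing the two bases via a cyclic cover of $C$ and the graph of a suitably chosen rational function $\tilde{C}\to\PP^1$. This Hilbert-scheme gluing is what realizes the test-configuration valuation over the actual base without any local product hypothesis.
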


A special consequence for when the general fiber is smooth, 
is as follows. 

\begin{Cor}[{A special case of Theorem~\ref{wstr1}}]
If there is a flat family of polarized 
open Calabi-Yau manifolds 
$(\mathcal{X}^{o},\mathcal{L}^{o})\twoheadrightarrow \Delta$, 
we can replace the central fiber after finite base change 
to make it weak open K-polystable. 
\end{Cor}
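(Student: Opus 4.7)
The plan is to deduce this corollary directly from Theorem \ref{wstr1} by exhibiting a compactification $((\mathcal{X},\mathcal{D}),\mathcal{L}) \to \Delta$ of the given family that satisfies the Type I hypotheses of that theorem. Concretely, I must produce, possibly after a finite base change, a relative compactification with horizontal reduced simple-normal-crossing boundary $\mathcal{D}$ and with $(\mathcal{X},\mathcal{X}_{0})$ plt-type; then Theorem \ref{wstr1} provides a further birational modification making the open central fiber weakly open K-polystable.

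First I would pick any relatively projective horizontal compactification $(\mathcal{X}',\mathcal{D}')$ of $(\mathcal{X}^{o},\mathcal{L}^{o})$ over $\Delta$ and apply a relative log resolution to arrange that $\mathcal{X}'$ is smooth and $\mathcal{D}' + \mathcal{X}'_{0}$ is simple normal crossing, with $\mathcal{D}'$ reduced and purely horizontal. The polarization $\mathcal{L}^{o}$ extends (up to twisting by exceptional divisors supported on the vertical locus) to a line bundle $\mathcal{L}'$ whose restriction to the open locus is $f$-ample.

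Next, applying Kempf--Knudsen--Mumford--Saint-Donat semistable reduction after a finite base change of $\Delta$, I would make $\mathcal{X}'_{0}$ reduced. A crucial observation is that the original smooth central fiber $\mathcal{X}^{o}_{0}$ of the given family already sits inside this resolved model as the open locus of a distinguished irreducible component $V$ of $\mathcal{X}'_{0}$; all other components of $\mathcal{X}'_{0}$ are exceptional for the birational modification and carry no open-locus content. I would then run a relative MMP over $\Delta$ in the dlt setting (following \cite{Fjn.ss, HX}), guided by a small combination of a horizontal $f$-ample divisor minus an $\epsilon$-multiple of the non-distinguished components of $\mathcal{X}'_{0}$, in order to contract those components and reach a dlt model $((\mathcal{X},\mathcal{D}),\mathcal{L})$ with irreducible central fiber $\mathcal{X}_{0} \supset \mathcal{X}^{o}_{0}$. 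This yields the required plt-type total pair by adjunction.

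The main technical obstacle I expect lies in this third step: arranging the relative MMP so that it only contracts the non-distinguished components of the central fiber while preserving the horizontal boundary structure and the relative polarization. The essential extra input, beyond the general semistable MMP machinery, is precisely the hypothesis of the corollary that the family already has a well-defined smooth central fiber $\mathcal{X}^{o}_{0}$; this canonically singles out the distinguished component $V$ and provides a reference polarization $\mathcal{L}^{o}|_{\mathcal{X}^{o}_{0}}$ to extend. Once the plt-type compactification is in hand, Theorem \ref{wstr1} applies directly and the resulting birational replacement yields the weak open K-polystable central fiber asserted in the corollary.
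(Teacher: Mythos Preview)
The paper gives no separate argument: the corollary is labelled ``a special case of Theorem~\ref{wstr1}'' and the proof of the theorem follows immediately afterward. The intended reading is that a flat family of open Calabi-Yau \emph{manifolds} over all of $\Delta$ visibly satisfies the Type~I hypothesis once horizontally compactified, since the open central fibre $\mathcal{X}^{o}_{0}$ is smooth and hence klt; the theorem then applies verbatim.

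Your route is correct and in fact supplies the compactification step the paper leaves implicit, but it is more elaborate than necessary. In particular, semistable reduction is out of place here: that is a tool for taming a \emph{degenerating} central fibre, whereas the hypothesis already hands you a smooth $\mathcal{X}^{o}_{0}$ over $0\in\Delta$, so there is nothing to reduce. The MMP step aimed at contracting extra vertical components is a valid way to reach an irreducible central fibre, though a more direct construction is available: since $\mathcal{X}^{o}\to\Delta$ is a smooth morphism, any relative projective closure with $\mathcal{D}$ defined as the closure of the generic-fibre boundary is purely horizontal by construction, and then $\mathcal{X}_{0}\setminus\mathcal{D}_{0}=\mathcal{X}^{o}_{0}$ forces $\mathcal{X}_{0}$ to be irreducible; a relative dlt model over $\Delta$ (as in \cite{Fjn.ss}) then gives the required compactification directly. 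Your core observation---that the given smooth $\mathcal{X}^{o}_{0}$ is exactly what makes the Type~I condition automatic---is the right one.
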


\begin{proof}[Proof of Theorem~\ref{wstr1}]
By \cite{Fjn.ss}, there is at least such $((\mathcal{X},\mathcal{D}),\mathcal{L})$ 
which satisfies the conditions except for the last one i.e., 
non-existence of non-product plt log test configuration of the central fiber 
is not ensured. 
\begin{Step}
Suppose there is a 
there is a plt-type log test configuration 
$((\mathcal{X}',\mathcal{D}'),\mathcal{L}')$ 
of the central fiber 
$((\mathcal{X}_{0},\mathcal{D}_{0}),\mathcal{L}|_{\mathcal{X}_{0}})$. 
By the arguments of 
Lemma~\ref{equiv}, Lemma \ref{equiv2}, 
we can even assume 
it is ${\rm Aut}^{o}
(\mathcal{X}^{o}_{0},
\mathcal{L}|_{\mathcal{X}^{o}_{0}})$-equivariant. 

Suppose the base curve germ $\Delta\ni 0$ is realized in a smooth 
algebraic curve $C\ni p$. 
Possibly after a finite base change, we want to glue 
``base changed enough'' base curve $C$ of 
$((\mathcal{X},\mathcal{D}),\mathcal{L})$ 
and the base $\mathbb{P}^{1}$ of 
$((\mathcal{X}',\mathcal{D}'),\mathcal{L}')$ 
to obtain a birational transform of 
$((\mathcal{X},\mathcal{D}),\mathcal{L})$ 
only along the fiber over $0$ so that the new fiber over $0$ becomes 
$(\mathcal{X}'_{0},\mathcal{L}'|
_{\mathcal{X}'_{0}})$. 
We provide its details as following Step \ref{Step1}, Step 
\ref{Step2}, and Step \ref{Step3}. 
\end{Step}

\begin{Step}[Indeterminancy resolution]\label{Step1}
We take a certain component of the incidence locus of the two 
Hilbert schemes one of which 
parametrizes the fibers $(X_{t},L_{t}^{\otimes m})$ for uniform 
$m\gg 0$, as 
${\it Hilb}(\mathbb{P}^{N_{X}})$, 
while the other ${\it Hilb}(\mathbb{P}^{N_{D}})$
parametrizes at least 
$(D_{t},\mathcal{L}^{\otimes m}|_{D_{t}})$ for the same fixed $m$. 
We denote such incidence locus by $H$. 
Furthermore, we suppose the destablizing log test configuration 
$((\mathcal{X}',\mathcal{D}'),\mathcal{L}')$ 
 is induced by a 
$\mathbb{C}^{*}$-action $\lambda$ on $\mathbb{P}^{N}$ hence on 
$H$. 

This gives a flat projective family over $\mathbb{C}^{*}\times C$ in a $\mathbb{C}^*$-equivariant manner, extending $\mathcal{X}$ over $\{1\}\times C$, 
hence a 
morphism $\mathbb{C}^{*}\times C\to {\it Hilb}(\mathbb{P}^{N})$. In particular, it induces a rational map 
$$\mathbb{P}^{1}\times C\dashrightarrow H.$$ 
We consider its indeterminancy locus which must be finite closed points inside $\{0,\infty\}\times C$. 
In particular, we can take $\{p=p_1,\cdots, p_l\}\subset C$ so that the indeterminancy locus is inside 
$\{0,\infty\}\times\{p=p_1,\cdots, p_l\}$. 
Since we assume that the order of the base field $\mathtt{k}$ is infinity, 
note that we can replace the subset $\{p=p_1,\cdots, p_l\}$ of $C$ by a larger one with arbitrarily big order if we need, as we do in the next Step \ref{Step2}. 

Now we consider a $\C^*$-equivariant resolution of 
indeterminancy of the rational map which we denote as $\mathcal{B}\to H$, 
which is a blow up of some close subscheme of $\mathbb{P}^1\times C$ which we denote as $\Sigma$. 
Then we obtain a diagram 
\[
\xymatrix{
&  &\mathcal{B}={\it Bl}_{\Sigma}(\mathbb{P}^1\times C)\ar@{->>}[ld]^-\pi \ar[rd]\\ 
& \mathbb{P}^1\times C & & 
H, 
\\ 
}
\]
and our flat projective family (over ($\mathbb{C}^{*}\times C$)) extends over $\mathcal{B}$. 
This is an easy example of so-called flattening procedure. 

Now we define {\it depth} of the blow up $\pi$ at a point in $C\times \mathbb{P}^1$. 
First, since $\pi$ is a birational proper morphism from regular surface to regular surface, it can be decomposed as 
the composition of finite maximal ideal blow up as 
$\pi=\pi^{(d)}\circ \pi^{(d-1)}\circ \cdots \circ \pi^{(1)}$ where each $\pi^{(i)}$ is a blow up 
with its corresponding maximal ideal defining a closed point $c_i$. We define 
${\rm depth}(\pi, (x,y))$ as $$\#\{i\mid c_i=(\pi^{(i-1)}\circ \cdots \circ \pi^{(1)})^{*}(x\times C)\cap 
(\pi^{(i-1)}\circ \cdots \circ \pi^{(1)})^{-1}_{*}(\tilde{C}\times y)\}.$$ 
Note that ${\rm depth}(\pi, (x,y))=0$ unless $x$ is either $0$ or $\infty$ and $y$ is one of $p_i$s. 
Then take a positive integer $m$ such that 
\begin{align}
\label{depth.cond} m> \max_{(x,y)} \{{\rm depth}(\pi, (x,y))\}, 
\end{align}

\end{Step}

\begin{Step}[Cyclic covering]\label{Step2}

If we replace $\{p_1,\cdots,p_l\}$ by a larger set if necessary, 
we can and do assume that there is an effective $\Z$-divisor $D$ on $C$ such that 
$\bigl(\sum_{1\le i \le l}p_i\bigr)\sim mD.$
\footnote{Here we used that the base field is characteristic $0$ hence 
infinity order in particular. }

We set $L:=\mathcal{O}_C(D)$ on $C$, and exploits the standard cyclic cover construction: 
$$\tilde{C}:=\mathit{Spec}_{\mathcal{O}_C}(\oplus_{0\le a<m}  \mathcal{O}_{C}(aD))\twoheadrightarrow C,$$
where the ring structure on the right hand side is induced by 
$\mathcal{O}_C(mD)\simeq \mathcal{O}_C(\sum_i p_i)\hookrightarrow \mathcal{O}_C$. 
Since $m$ is coprime to the characteristic of $\mathtt{k}$, this gives an integral normal curve 
$\tilde{C}$ which is cyclic cover of $C$. We denote the covering by $f$ and write its graph in $\tilde{C}\times C$ as 
$\Gamma_f$. Recall that $\Gamma_f$ is isomorphic to $\tilde{C}$ through the projection. 

Note that $f^{*}p_i=m  p'_i$ with $p'_i\in \tilde{C}$ for each $i$. 

\end{Step}

\begin{Step}[Rational function for base change]\label{Step3}

We take a pair of disjoint finite closed sets $\{q_j\}_j \subset C$ and $\{q'_j\}_j \subset C$ such that: 
\begin{enumerate}
\item $\{q_j\}=\{q'_j\}\ge 2g(\tilde{C})+1, $
\item $\{p'_i\}\cap \{q_j\}=\{p'_1\},$ 
\item $(\{p'_i\}\cup \{q_j\})\cap \{q'_j\}=\emptyset$, 
\item $\sum_j q_j \sim \sum_j q'_j.$ 
\end{enumerate}

From the last condition, 
we have a rational function $r\in K(\tilde{C})^*$ whose zeroes are $\{q_j\}$ with multiplicities $1$ and 
poles $\{q'_j\}$ with multiplicities $1$. In particular, $r$ is \'etale over $\{0,\infty\}\subset \mathbb{P}^1$. 
Therefore, $\tilde{\mathcal{B}}:=\mathcal{B}\times_{(\mathbb{P}^1\times C)} (\tilde{C}\times C)$ is smooth, 
on which we also have a flat polarized family extending that of original $(\mathcal{X},\mathcal{L})$ which is isotrivial for $\tilde{C}$-direction. 
We denote the obtained morphism $\tilde{\mathcal{B}}\to (\tilde{C}\times C)$ by $\tilde{\pi}$, 
which can be also seen as an indeterminancy resolution for the corresponding rational map to ${\it Hilb}(\mathbb{P}^{N})$. 
Summarizing, we have the following diagram: 
\[
\xymatrix{
&                                                          & \Gamma_f \ar@{^{(}-_>}[d]\\ 
&\tilde{\mathcal{B}} \ar@{->>}^{\widetilde{(r\times {\it id})}}[d]\ar@{->>}^{\tilde{\pi}}[r] &(\tilde{C}\times C) \ar@{->>}^{(r\times {\it id})}[d]  \\ 
&\mathcal{B}=Bl_{\Sigma}(\mathbb{P}^1\times C) \ar@{->>}^-{\pi}[r] & (\mathbb{P}^1\times C).
}
\]

Then we consider $\tilde{\pi}^{-1}_* \Gamma_f \subset 
\tilde{\mathcal{B}}$ which is isomorphic to $\tilde{C}$. 
The family over $\Gamma_{f}\simeq \tilde{C}$, 
pulled back from the incidence locus of 
${\rm Hilb}(\PP^{N_{X}})\times {\rm Hilb}(\PP^{N_{D}})$ is our desired 
glued family of polarized log Calabi-Yau varieties. 
\end{Step}

\begin{Step}
We denote the normalization of the new family over $\tilde{C}$ as 
$\mathcal{X}(1)$ and replace the notion $\tilde{C}$ by 
$C$ for simplicity. Note the normalization map of $\mathcal{X}$ 
is bijective, which follows from fiberwise $S_{2}$-condition. 
Therefore, the pullback of the boundary divisor makes sense 
which we denote as $\mathcal{D}(1)$. Furthermore, we 
put the pullback of the polarization denoted as $\mathcal{L}(1)$. 
A priori, $(\mathcal{X}(1),\mathcal{D}(1))$ may be only 
log canonical, not necessarily dlt, by the inversion of adjunction 
(\cite{Kwkt}). Therefore, we consider its dlt modification (cf., e.g., \cite{Kol13, FG, OX}) 
to modify to a dlt minimal model over $\Delta$. 
\end{Step}

\begin{Step}
Notice that the rank of 
${\rm Aut}^{o}
(\mathcal{X}^{o}_{0},
\mathcal{L}|_{\mathcal{X}^{o}_{0}})$ increases by the above procedure, 
hence the replacement either stops after finitely many times, 
to obtain a sequence 
$((\mathcal{X}(i),\mathcal{D}(i)),\mathcal{L}(i)) (i=1,2,\cdots)$ 
or 
$\mathcal{X}^{o}_{0}$ becomes an algebraic torus so that the 
assertion holds by Theorem\ref{ops.ex} \eqref{toric} after all.  
\end{Step}
\end{proof}

\vspace{2mm}
To state a variant for general reducible degenerations, 
we introduce the following a priori variant stability notion, which 
might 
remind the readers of the notion of pointed Gromov-Hausdorff limits 
in metric geometry. However, we recall from Caution~\ref{Caution} that 
the open parts of the components of degenerate 
varieties which satisfy these 
(pointed) open polystability notion can {\it not} be pointed Gromov-
Hausdorff limits themselves in general, especially for non-maximal degenerations, 
even after rescale (cf., e.g., \cite{TGC.II}). 
Neverthelss, weaker relations can be expected in special situations 
and we wish to explore 
relations in more general context in future.

\begin{Def}\label{ops5}
We suppose a projective variety 
$X$ has only sdlt singularities and 
$K_{X}\sim_{\Q} 0$ and the irreducible decomposition as $X=\cup_{i}V_{i}$ 
so that each $(V_{i},D_{i})$ is a log Calabi-Yau dlt pair, 
where $D_{i}$ denotes the conductor divisor. 

Now we fix a reference 
closed point $p$ inside the 
klt locus of $\X_{0}$. 
We also fix a polarization i.e., an ample line bundle on $X$. 

For a log test configuration 
$((\mathcal{X},\mathcal{D}),\mathcal{L})$ of $((X,D),L)$ 
such that 
\begin{itemize}
\item $\mathcal{X}$ satisfies Serre's $S_{2}$ condition. 

\item Its restriction to the closure of $V_{i}\times 
(\PP^{1}\setminus \{0\})$ is of plt type in the sense of 
Definition~\ref{ops} \eqref{pltt}. 

\item The log Donaldson-Futaki invariant 
${\rm DF}((\mathcal{X},\mathcal{D}),\mathcal{L})$ 
(in the sense of 
\cite{Don11, OS}) vanishes. 

\item The limit of $p$ 
i.e., $\overline{\mathbb{G}_{m}\cdot (p\times \{1\})}\cap \mathcal{X}_{0}$
 is inside the klt locus of 
$(\X_{0},\mathcal{D}_{0})$. 

\end{itemize}

$((X,D),L)$ is {\it weakly pointed open K-polystable} 
if every such log test configuration $((\mathcal{X},\mathcal{D}),\mathcal{L})$ satisfies that the klt locus of 
$(\mathcal{X},\mathcal{D})$ is of product type. 
\end{Def}

\begin{Thm}[Weak (proper) pointed stable reduction]\label{wstr2}
Take an arbitrary flat $\mathbb{Q}$-Gorenstein family 
$((\mathcal{X}^{*},\mathcal{D}^{*}),\mathcal{L}^{*})\to \Delta^{*}$ 
of 
polarized projective klt log Calabi-Yau varieties 
over a punctured germ of smooth curve $0\in \Delta$, 
with $K_{\X^{*}/\Delta^{*}}+\mathcal{D}^{*}
\equiv 0$ 
\footnote{again, equivalent to 
$K_{\X^{*}/\Delta^{*}}+\mathcal{D}^{*}\sim_{\Q} 0$ by 
\cite{FjnM, Gona}} 
 and a meromorphic section $s\colon \Delta^{*}
\to \X^{*}$, possibly after a finite base change of $\Delta\ni 0$, 
there is a dlt minimal model 
$(\mathcal{X},\mathcal{D}+\X_{0})\to \Delta$ 
such that the following additional conditions hold: 
\begin{enumerate}
\item $(\X_{0},\mathcal{D}|_{\X_{0}})$ is semi-dlt and 
$K_{\X_{0}}+\mathcal{D}|_{\X_{0}}\sim_{\Q}0$, 
\item \label{insideklt}
$\lim_{t\to 0}s(t)$ lies inside the klt locus $\X_{0}^{\rm klt}$ of 
the central fiber $(\X_{0},\mathcal{D}_{0})$ 
\item \label{wpops}
$((\X_{0},\mathcal{D}|_{\X_{0}}),\mathcal{L}|_{\X_{0}})$ is a weakly pointed 
open K-polystable 
Calabi-Yau polarized variety with respect to $\lim_{t\to 0} s(t)$ 
in the sense of Definition~\ref{ops5}. 
\end{enumerate}
\end{Thm}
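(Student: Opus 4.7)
My plan is to combine the semistable minimal model program of \cite{Fjn.ss} with the iterative ``base change and glue'' procedure already developed in the proof of Theorem~\ref{wstr1}, while carefully tracking the meromorphic section $s$ throughout. The framework is: first produce any dlt minimal model; then modify it so the section's limit lies in the klt locus; then iteratively destroy any destabilizing pointed plt-type log test configuration of the central fiber.

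\textbf{Step 1 (Initial dlt minimal model).} After a finite base change of $\Delta\ni 0$, apply the semistable MMP \cite{Fjn.ss} to obtain a dlt minimal model $(\mathcal{X},\mathcal{D}+\mathcal{X}_{0})\to \Delta$ extending $(\mathcal{X}^{*},\mathcal{D}^{*})$. By \cite{FjnM,Gona} the numerical vanishing of $K+\mathcal{D}+\mathcal{X}_{0}$ upgrades to $\sim_{\mathbb{Q}}$-triviality, and restriction to $\mathcal{X}_{0}$ yields semi-dlt-ness together with $K_{\mathcal{X}_{0}}+\mathcal{D}|_{\mathcal{X}_{0}}\sim_{\mathbb{Q}} 0$, establishing item~(1). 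The polarization $\mathcal{L}^{*}$ extends to a $\mathbb{Q}$-line bundle $\mathcal{L}$ on $\mathcal{X}$ by the usual ample extension trick (adding a large multiple of a relatively ample divisor supported on $\mathcal{X}_{0}$).

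\textbf{Step 2 (Section lands in the klt locus).} After further base change, extend the meromorphic section to $s\colon \Delta\to \mathcal{X}$ by properness. If $\lim_{t\to 0}s(t)$ lies on some lc (non-klt) stratum of $(\mathcal{X}_{0},\mathcal{D}|_{\mathcal{X}_{0}})$, I would run a relative $(K_{\mathcal{X}}+\mathcal{D}+\mathcal{X}_{0}-\epsilon\,\overline{s(\Delta^{*})})$-MMP over $\Delta$ for $0<\epsilon\ll 1$, equivalently extracting (via dlt modification as in \cite{Kol13,OX}) a suitable divisorial place whose center in $\mathcal{X}$ contains the section's closure. The outcome is a birationally modified dlt minimal model on which the limit of $s$ becomes a generic point of an irreducible component of the central fiber, hence klt. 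This gives item~(2).

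\textbf{Step 3 (Iterative pointed polystabilization).} Suppose now that the central fiber is not yet weakly pointed open K-polystable with respect to $p:=\lim_{t\to 0}s(t)$. By Definition~\ref{ops5} there exists a log test configuration $((\mathcal{X}',\mathcal{D}'),\mathcal{L}')$ of $((\mathcal{X}_{0},\mathcal{D}|_{\mathcal{X}_{0}}),\mathcal{L}|_{\mathcal{X}_{0}})$ satisfying the four bulleted conditions of Definition~\ref{ops5} which is non-product on the klt open locus, and whose limit of $p$ remains in the klt locus. Only the component $V_{i}\ni p$ can witness the non-product-ness, thanks to Proposition~\ref{tc.lc} applied componentwise, so after a Lemma~\ref{equiv2}-type reduction I may assume the test configuration is equivariant for $\mathrm{Aut}^{o}(V_{i}^{o},\mathcal{L}|_{V_{i}^{o}})$. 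Now apply verbatim Steps~1--3 of the proof of Theorem~\ref{wstr1} (indeterminacy resolution in an incidence Hilbert scheme, followed by cyclic covering of the base and a rational-function-gluing) to the component $V_{i}$, then reglue to the remaining components along the conductor using \cite[Thm.~9.21, Prop.~9.48]{Kol13} and take a normalization followed by dlt modification to restore semi-dlt-ness. The section $s$, sitting inside the open locus where the operation is an isomorphism, extends uniquely; its limit remains in the klt locus by the assumption imposed on the test configuration.

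\textbf{Step 4 (Termination and the main obstacle).} Each iteration of Step~3 strictly increases $\dim\mathrm{Aut}^{o}(V_{i}^{o},\mathcal{L}|_{V_{i}^{o}})$, where $V_{i}$ now denotes the component of $\mathcal{X}_{0}$ containing $\lim_{t\to 0}s(t)$. Boundedness of the family of polarized klt log Calabi--Yau pairs with fixed volume $\mathcal{L}^{n}$ bounds the dimension of such automorphism groups, forcing termination, or else $V_{i}^{o}$ becomes an algebraic torus, which is already weakly open K-polystable by Theorem~\ref{ops.ex}(\ref{toric}). I expect the main obstacle to be Step~2: in the reducible/non-normal semi-dlt setting, guaranteeing that a canonical birational operation moves $\lim s(t)$ into the klt locus \emph{without} spoiling semi-dlt-ness of the central fiber, and simultaneously preserving (or re-establishing) the compatibility with the gluing along conductor divisors, requires a careful extraction of lc places and may force several interleaved applications of dlt modification and of the connectedness principle recalled in Remark~\ref{conn.compo}.
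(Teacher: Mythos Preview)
Your overall architecture (build a dlt model, push the section into the klt locus, then iterate the ``glue in a destabilizing test configuration'' trick from Theorem~\ref{wstr1}) matches the paper, and your Steps~1 and~3--4 are close to the paper's Steps~1 and~5--6. There are, however, two genuine discrepancies.

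\textbf{Step 2 is the real gap, and your proposed mechanism does not work.} Since $K_{\mathcal{X}}+\mathcal{D}+\mathcal{X}_0\sim_{\mathbb{Q},\Delta}0$, running a ``$(K_{\mathcal{X}}+\mathcal{D}+\mathcal{X}_0-\epsilon\,\overline{s(\Delta^{*})})$-MMP'' is running MMP for a non-lc pair (negative coefficient) or, equivalently, for the class $-\epsilon\,\overline{s}$, which heads toward a Mori fibre space rather than extracting a divisor. Even read charitably as ``extract an lc place over the section's limit,'' one blow-up need not suffice: if $\lim s(t)$ sits on a deep stratum $V_{1}\cap\cdots\cap V_{k}$, the new exceptional component still meets the strict transforms of the $V_{i}$, and the section can land right back on the new double locus; you give no reason this terminates. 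The paper's mechanism is different and is the actual content here: take the $N$-th ramified base change $b^{(N)}\colon\Delta\to\Delta$, observe that over the log smooth locus the pulled-back central fibre is toroidal with an explicit fan description, take a regular subdivision and then a dlt minimal model $(\mathcal{X}^{[N]},\mathcal{D}^{[N]})$. The point is arithmetic: one records the rational numbers $\mathrm{order}_{t=0}(s^{*}V_{i})$ and $\mathrm{order}_{t=0}(s^{*}\mathcal{D}_{j})$ and chooses $N$ to be a prime not dividing any numerator or denominator. With this choice the section's limit, read through the Morgan--Shalen/Boucksom--Jonsson picture of the dual complex, lands at a vertex of the subdivided complex rather than on a face, i.e.\ in the klt locus of the central fibre. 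Your instinct that Step~2 is the obstacle is right; the missing idea is this base-change-by-a-well-chosen-prime combined with toroidal subdivision, not an MMP.

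\textbf{Your Step 3 contains a false claim.} It is not true that ``only the component $V_{i}\ni p$ can witness the non-product-ness.'' In Definition~\ref{ops5} the fourth bullet constrains where the limit of $p$ lands, but the polystability conclusion demands product-ness on the \emph{entire} klt locus. A test configuration that is trivial on $V_{i}$ (so $p$ stays put, certainly in the klt locus) but non-product on some other $V_{j}$ still violates weak pointed open K-polystability. Accordingly the paper does not restrict the gluing to the component through $p$; it glues in the full destabilizing test configuration of the whole central fibre exactly as in Theorem~\ref{wstr1}, then normalizes and takes a log-crepant dlt blow-up. Your componentwise reglue via \cite[9.21, 9.48]{Kol13} is unnecessary and your termination bound on $\dim\mathrm{Aut}^{o}(V_{i}^{o},\mathcal{L}|_{V_{i}^{o}})$ for the single component through $p$ does not control the process once other components can destabilize.
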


Again, we also write brief version for a rather special i.e., 
log smooth case, for readers' convenience. 

\begin{Cor}\label{wstr3}
Take an arbitrary degenerating polarized family $(\mathcal{X},\mathcal{L})
\twoheadrightarrow \Delta$ 
of $n$-dimensional pointed smooth (projective) polarized 
Calabi-Yau manifolds also with a holomorphic section $\{s(t)\}_{t}$. 
We write the fibers as 
$(\X_{t},\mathcal{L}_{t})\ni s(t) (t\neq 0)$ 
and suppose the degeneration $\mathcal{X}_{0}$ 
is a simple normal crossing Calabi-Yau 
variety. 
For instance, polarized Kulikov families of 
pointed K3 surfaces, not of Type I, satisfy the condition. 

Then, we can birationally modify the central fiber, possibly after a finite base change, 
to make it a ``better'' degenerate polarized Calabi-Yau varieties 
$(\X_{0},\mathcal{L}_{0})$ 
which satisfies: 
\begin{enumerate}
\item $\X_{0}$ is 
simple normal crossing away from a closed subset of 
dimension $n-2$, 
\item  the limit point of ${\rm lim}_{t\to 0}s(t)$ 
stays outside the double locus of $\X_{0}$, 
\item $(\X_{0},\mathcal{L}|_{\X_{0}})$ is 
weakly pointed open K-polystable 
with respect to the limit point of ${\rm lim}_{t\to 0}s(t)$.
\end{enumerate}
\end{Cor}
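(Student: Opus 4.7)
The plan is to apply Theorem~\ref{wstr2} directly, taking the trivial
horizontal boundary $\mathcal{D}^{*}=0$. Since every fibre
$(\mathcal{X}_{t},\mathcal{L}_{t})$ for $t\neq 0$ is smooth and
Calabi--Yau, it is automatically klt polarised log Calabi--Yau, so
all hypotheses of Theorem~\ref{wstr2} are met. That theorem will
produce, after a finite base change of $\Delta\ni 0$, a dlt minimal
model $(\mathcal{X},\mathcal{X}_{0})\to\Delta$ whose central fibre
$(\mathcal{X}_{0},0)$ is semi-dlt and satisfies
$K_{\mathcal{X}_{0}}\sim_{\Q}0$, contains $\lim_{t\to 0}s(t)$ inside
its klt locus, and is weakly pointed open K-polystable with respect
to that limit. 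Condition (3) of the corollary is then nothing but
conclusion \eqref{wpops} of Theorem~\ref{wstr2}, applied with the
identifications $\mathcal{D}|_{\mathcal{X}_{0}}=0$ and
$\mathcal{L}|_{\mathcal{X}_{0}}=\mathcal{L}_{0}$.

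To deduce condition (1), I will invoke the definition of sdlt
(\cite[1.1]{FjnM}, \cite[\S 5.6]{Kol13}): by inversion of adjunction,
the reduced dlt boundary $\mathcal{X}_{0}\subset\mathcal{X}$ inherits
an sdlt structure on $(\mathcal{X}_{0},0)$, and by the defining
property of sdlt this forces $\mathcal{X}_{0}$ to be simple normal
crossing on a Zariski open subset whose complement has codimension at
least two, which is exactly condition (1). For condition (2), I will
use that for an sdlt Calabi--Yau variety $(\mathcal{X}_{0},0)$ the
klt locus of the pair coincides with the complement of the double
locus, since on the normalisation the conductor divisor supplies the
only codimension-one log canonical places and does so with coefficient
$1$ in the Shokurov different. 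Then \eqref{insideklt} of
Theorem~\ref{wstr2} directly places $\lim_{t\to 0}s(t)$ outside the
double locus, giving condition (2).

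The main thing to check carefully will be that the sequence of base
changes, normalisations and dlt modifications performed inside the
proof of Theorem~\ref{wstr2} does not move the (already holomorphic)
section's limit onto a newly extracted log canonical centre of the
modified central fibre, since that would spoil both conditions (1)
and (2). However, the klt-location clause \eqref{insideklt} is built
into Theorem~\ref{wstr2} precisely to exclude this scenario: the
weak pointed open K-polystability and the klt containment are
achieved simultaneously in a single dlt model. Once we accept that
clause, no further argument is needed and each of the three
conditions of the corollary follows mechanically. The only
additional observation is that the assumed initial snc central fibre
guarantees that the family is a bona fide polarised Kulikov-type
model (in particular, for $K3$ degenerations of non-Type~I, the
hypotheses of Theorem~\ref{wstr2} apply verbatim), so no auxiliary
semistable reduction step is required before invoking the main
theorem.
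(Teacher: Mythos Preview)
Your proposal is correct and matches the paper's approach: the corollary is presented there as an immediate specialization of Theorem~\ref{wstr2} (the paper says it is a ``brief version for a rather special i.e., log smooth case'') and gives no separate proof. Your unpacking of how conditions (1) and (2) follow from the sdlt property of $\mathcal{X}_{0}$ and from clause~\eqref{insideklt} is exactly the intended translation.
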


\begin{proof}[proof of Theorem~\ref{wstr2}]
\begin{Stp}
By the semistable reduction theorem \cite[Chapter II, III]{KKMSD} 
and the semistable minimal model program \cite{Fjn.ss}, 
we can at least construct a dlt minimal model 
$(\X,\mathcal{D}+\X_{0})\to \Delta$. 
\end{Stp}

\begin{Stp}\label{b.c.}
The main innovation from here is the following base change trick, 
which more or less yields ``sub-divided'' 
dlt minimal models 
with much more irreducible components in the central fiber. 

Using the $N$-th ramifying finite morphism 
$b^{(N)}\colon (\Delta\ni 0)\to (\Delta\ni 0)$ with 
$N\in \Z_{>0}$, 
we take the base change of $(\X,\mathcal{D})$ 
which we denote as $(\X^{(N)},\mathcal{D}^{(N)})
=\X\times_{\Delta, b^{(N)}}\Delta$. 
Recalling the determination of log canonical centers of dlt pairs 
(cf., \cite{Amb}, 
\cite[\S 3.9]{Fjnlt}), combined with \cite[proof of 5.20]{KM}, the lc 
centers of $(\X^{(N)},\mathcal{D}^{(N)}+\X^{(N)}_{0})$ 
are simply the preimages of 
those of $(\X,\mathcal{D}+\X_{0})$. 
Let us consider the open subset of $\X$ where 
$(\X,\mathcal{D}+\X_{0})$ is log smooth, which we denote as 
$\X^{\rm lsm}$ and think of \'etale local structure of 
$\X^{\rm lsm}\times_{\Delta, b^{(N)}}\Delta$. 
From the log smoothness of $(\X,\mathcal{D}) \cap \X^{\rm lsm}$, 
the preimage of $\X^{\rm lsm}$ in 
$(\X^{(N)},\mathcal{D}^{(N)}+\X^{(N)}_{0})$ are toroidal 
and have simple explicit combinatorial description 
by a rational polyhedral fan over the 
dual intersection complex (\cite[Chapter II, III]{KKMSD}). 
We take an arbitrary regular 
subdivision of the corresponding fan, then it gives a 
crepant toric log resolution of 
the preimage of $\X^{\rm lsm}$ inside 
$\X^{(N)}$. 
\end{Stp}

\begin{Stp}
Suppose that log resolution is given by the blow up of a coherent 
ideal $I^{o}$ in the preimage of $\X^{\rm lsm}$ inside 
$\X^{(N)}$. We extend the coherent ideal $I^{o}$ to a coherent 
ideal $I$ of 
whole $\mathcal{O}_{\X^{(N)}}$ whose normalized blow up gives still 
a log resolution (or we take the log canonical closure of \cite{HX}). 
Then we run the relative minimal model program 
over $\mathcal{X}^{(N)}$, 
which is now allowed by using \cite{HX, HH} etc. This gives a 
dlt minimal model over $\mathcal{X}^{(N)}$ which we denote by 
$(\mathcal{X}^{[N]},\mathcal{D}^{[N]})$. By a simple phenomenon that 
the all log canonical centers of $(\mathcal{X},\mathcal{D})$ intersect with 
$\X^{\rm lsm}$ (cf., \cite{Amb}, 
\cite[\S 3.9]{Fjnlt}), it follows that 
$(\mathcal{X}^{[N]},\mathcal{D}^{[N]})$ is also a dlt minimal model {\it over the base curve} $\Delta$ as well. 

From our earlier determination of log canonical centers of 
$(\X^{(N)},\mathcal{D}^{(N)}+\X^{(N)}_{0})$, it easily follows that 
all the irreducible components of $\X^{[N]}_{0}$ 
intersect with the preimage of $\X^{\rm lsm}$. 
\end{Stp}

\begin{Stp}
Now we consider ${\rm order}_{t=0}(s^{*}V_{i}) \in \Q$ and 
${\rm order}_{t=0}(s^{*}\mathcal{D}_{i})\in \Q$ 
where $V_{i}$ denotes the 
irreducible component of $\X_{0}$ and 
$\mathcal{D}_{i}$ denotes irreducible components of 
$\mathcal{D}$. Comparing with those rational 
numbers, 
if we take a prime number $N$ which do not divide 
neither the numerators or denominators, then our 
desired assertion \eqref{insideklt} 
easily follows. This fact can be 
shown in standard explicit blow up calculation while it more systematically 
follows by use of the Morgan-Shalen-Boucksom-Jonsson construction 
(cf., e.g., \cite[Appendix]{TGC.II}) which we expand 
in details in another paper \cite{mod} more. So now we finish 
the discussion of our base change trick which 
ensures the dlt model existence satisfying the 
condition \eqref{insideklt}. 
\end{Stp}

\begin{Stp}
As a next step, we wish to improve further the obtained model 
$(\X^{(N)},\mathcal{D}^{(N)})$ to make it satisfy \eqref{wpops} i.e., 
the weak pointed open K-polystability as desired. 
Here, we use the same trick as previous Theorem\ref{wstr1} 
of ``gluing in'' test configuration; 
we take an arbitrary ample extension $\mathcal{L}^{(N)}$ 
which extends that of (base change of) $\mathcal{L}^{*}$. 
If $((\X^{(N)}_{0},\mathcal{D}^{(N)}_{0}),\mathcal{L}^{(N)}|
_{\X^{(N)}_{0}})$ 
is not weakly pointed open K-polystable with respect to the 
limit $p:=\lim_{t\to 0}s(t)$ of meromorphic section $s$ in $\X^{(N)}$, 
then there is a log test configuration of 
$((\X^{(N)}_{0},\mathcal{D}^{(N)}_{0}),\mathcal{L}^{(N)}|
_{\X^{(N)}_{0}})\ni p$ 
of plt-type 
$((\X',\mathcal{D}'),\mathcal{L}')$ such that the 
limit of $p$ is inside klt locus of 
$(\X^{(N)}_{0},\mathcal{D}^{(N)}_{0})$, 
the log Donaldson-Futaki invariant vanishes 
${\rm DF}(\X^{(N)}_{0},\mathcal{D}^{(N)}_{0})=0$, and not of product 
type. 
More precisely, completely similarly to 
the Steps \ref{Step1},\ref{Step2},\ref{Step3} of the proof of  
Theorem~\ref{wstr1}, 
we take a sufficiently high degree ramifying base change 
of $(\X^{(N)},\mathcal{D}^{(N)})$ and 
glue them along the fiber over $0\in \Delta$ 
with the above log plt-type test configuration 
$((\X',\mathcal{D}'),\mathcal{L}')$. 
Therefore we obtain $((\mathcal{X},\mathcal{D}),\mathcal{L})$ which 
satisfies \eqref{wpops}. 
\end{Stp}

\begin{Stp}
Finally we want to ensure the dlt property, and we slightly 
modify as follows: we first take (semi-)normalization of 
the total space $\mathcal{X}$ which must be bijective to $\X$, 
then the inversion of adjunction tells us that 
$(\X,\mathcal{D}+\X_{0})$ is log canonical. 
Hence we can take log crepant dlt blow up as in 
e.g., \cite{FG, OX, Kol13}, which is desired model. 
The condition \eqref{insideklt} is automatically 
satisfied because of the 
glueing construction which preserves the same property 
for $((\X',\mathcal{D}'),\mathcal{L}')$. 
\end{Stp}
\end{proof}
We explore the above Step\ref{b.c.} more systematically and 
thoroughly in another paper in preparation. 
If we replace the use of weak {\it pointed} open 
K-polystability in above \ref{wstr2}, \ref{wstr3}, 
by the weak open K-polystability 
in the sense of Definition~\ref{ops4} \eqref{gwps} (unpointed version), 
we also obtain the following. Since the proof goes 
same way as that of Theorem \ref{wstr1} just by replacing the definition of 
stability, hence easier than \ref{wstr2}, we omit the proof. 

\begin{Thm}[Weak (proper unpointed) stable reduction]\label{wstr4}
Take an arbitrary flat $\mathbb{Q}$-Gorenstein family 
$((\mathcal{X}^{*},\mathcal{D}^{*}),\mathcal{L}^{*})\to \Delta^{*}$ 
of 
polarized projective klt log Calabi-Yau varieties 
over a punctured germ of smooth curve $0\in \Delta$, 
with $K_{\X^{*}/\Delta^{*}}+\mathcal{D}^{*}
\equiv 0$ 
\footnote{again, equivalent to 
$K_{\X^{*}/\Delta^{*}}+\mathcal{D}^{*}\sim_{\Q} 0$ by 
\cite{FjnM, Gona}}, 
possibly after a finite base change of $\Delta\ni 0$, 
there is a dlt minimal model 
$(\mathcal{X},\mathcal{D}+\X_{0})\to \Delta$ 
such that the following additional conditions hold: 
\begin{enumerate}
\item $(\X_{0},\mathcal{D}|_{\X_{0}})$ is semi-dlt and 
$K_{\X_{0}}+\mathcal{D}|_{\X_{0}}\sim_{\Q}0$, 
\item \label{wpops}
$((\X_{0},\mathcal{D}|_{\X_{0}}),\mathcal{L}|_{\X_{0}})$ is 
weak open K-polystable 
(in the sense of Definition~\ref{ops4} \eqref{gwps}). 
\end{enumerate}
\end{Thm}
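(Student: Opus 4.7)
The plan is to follow the template of Theorem~\ref{wstr1}, with the modification that now we allow general semi-dlt (reducible) degenerate fibers and use the reducible version of weak open K-polystability from Definition~\ref{ops4}~\eqref{gwps}, rather than the plt-type central fiber constraint of Type I. First I would produce an initial dlt minimal model $(\mathcal{X},\mathcal{D}+\mathcal{X}_0)\to \Delta$ extending the punctured family: after a suitable semistable reduction on $\Delta\ni 0$ in the sense of \cite{KKMSD}, an application of the semistable log MMP of \cite{Fjn.ss} (together with \cite{HX,HH} as needed for the log canonical / semi-log-canonical improvements) gives a dlt minimal model whose central fiber is semi-dlt and numerically trivial, i.e.\ $K_{\mathcal{X}_0}+\mathcal{D}|_{\mathcal{X}_0}\sim_{\Q} 0$ by \cite{FjnM,Gona}. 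This ensures condition (1).

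Next, suppose the central fiber $((\mathcal{X}_0,\mathcal{D}|_{\mathcal{X}_0}),\mathcal{L}|_{\mathcal{X}_0})$ is \emph{not} weak open K-polystable in the sense of Definition~\ref{ops4}~\eqref{gwps}. Then there is a log test configuration $((\mathcal{X}',\mathcal{D}'),\mathcal{L}')$ satisfying the three bullet points of that definition (the $S_2$-condition, plt-type restriction to each component, vanishing of $\mathrm{DF}$) whose klt locus is not a product test configuration. As in Lemmas~\ref{equiv}, \ref{equiv2}, we may take it equivariant with respect to $\mathrm{Aut}^{o}$ of the central fiber. Now I would apply the gluing-in procedure of Steps \ref{Step1}--\ref{Step3} in the proof of Theorem~\ref{wstr1}: embed both the degenerating family and the test configuration into a common Hilbert scheme via Kodaira embeddings, resolve the indeterminancy of the associated rational map $\mathbb{P}^1\times C\dashrightarrow H$, perform a cyclic covering $\tilde{C}\to C$ of sufficiently large degree to ``swallow'' the depth of the indeterminancy resolution, and use a rational function on $\tilde{C}$ étale over $\{0,\infty\}\subset \mathbb{P}^1$ to extract a new family along the graph $\Gamma_f$. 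Normalizing and pulling back the boundary and polarization, then taking a dlt modification (inversion of adjunction plus a dlt blow-up as in \cite{FG,OX,Kol13}), we obtain a new dlt minimal model with central fiber replaced by (a birational dlt model of) $(\mathcal{X}'_0,\mathcal{D}'_0)$.

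Then I would iterate. Each iteration strictly increases the rank of $\mathrm{Aut}^{o}$ of the central fiber (via the $\mathbb{G}_m$-action built into the newly glued test configuration), and on each irreducible component this rank is bounded by the dimension, with maximal rank achieved only when the open locus becomes an algebraic torus. Combined with the component-wise analysis via the intersection-number formula of \cite{OS}, which lets us decompose a destabilizer of $\mathcal{X}_0$ into destabilizers of its components, the process terminates in finitely many steps in each component, yielding a model where every destabilizing log test configuration must be a product on the klt locus. At termination, conditions (1) and (2) hold simultaneously: the semi-dlt and Calabi-Yau property is preserved throughout by construction, and the final central fiber is weak open K-polystable.

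The main obstacle is the reducible (non-plt) central fiber case, which is the essential difference from Theorem~\ref{wstr1}. One must check that the gluing procedure respects the conductor divisors consistently across components (so that one recovers a polarized semi-dlt family and not just a disconnected union), and that the intersection-number decomposition of the log Donaldson-Futaki invariant in Lemma~\ref{equiv2} allows the component-wise argument to be assembled globally. The uniqueness of gluing along conductors from \cite[\S 5.6]{Kol13} (cf.\ \cite{FjnM}) handles the first point, and the second is exactly what was verified in the proof of Lemma~\ref{equiv2}; together they reduce the reducible case to the Type I analysis component by component.
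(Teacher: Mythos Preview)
Your proposal is correct and follows essentially the same approach as the paper: the paper explicitly omits the proof of Theorem~\ref{wstr4}, stating only that it ``goes same way as that of Theorem~\ref{wstr1} just by replacing the definition of stability, hence easier than \ref{wstr2}''. Your outline---initial dlt minimal model via \cite{Fjn.ss}, the gluing-in procedure of Steps~\ref{Step1}--\ref{Step3}, dlt modification, and termination via the increasing rank of $\mathrm{Aut}^{o}$---is precisely this, and your additional remarks on handling the reducible central fiber via the component-wise decomposition of the log Donaldson-Futaki invariant and Koll\'ar's gluing along conductors are the natural adaptations implicit in the paper's reference to Definition~\ref{ops4}~\eqref{gwps}.
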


\begin{Rem}\label{GHKrecover}
After above Theorem \ref{ops.ex}, 
Corollary\ref{surface.classify} etc, 
we expect that the observation by Gross-Hacking-Keel \cite{GHK} 
that any Type III one parameter algebraic degeneration can be
vertically birationally transformed into a toric degeneration, 
can be seen as a special case of strong open K-polystable reduction. 
\end{Rem}
In another paper \cite{mod}, 
we discuss a continuation of the above discussion in the proof. 

\begin{Rem}
Consider a log Calabi-Yau lc pair $(\PP^{2}, \frac{3}{4}C)$ 
where $C$ is a ``cat-eye'' i.e., a union of two smooth 
conics $C_{1}$ and $C_{2}$ intersecting at two tacnodes, 
i.e., $1$-dimensional $A_{3}$-singularities (cf., \cite{HL, OSS, ADL}). 
Take the blow up of both tacnodes 
$C_{1}\cap C_{2}$ and further blow up 
the two singular points of the strict transform of $C$. 
Then we obtain a birational ruled surface 
$X\twoheadrightarrow \PP^{1}$ with two sections $D_{1}$ and 
$D_{2}$. We can make this into a log crepant resolution 
$(X,D_{1}+D_{2}+\Delta)\to \mathbb{P}^{2}$. 
This log Calabi-Yau $(X,D_{1}+D_{2}+\Delta)$ is dlt and 
we can easily see its weak open K-polystability by its 
simple structure of log canonical centers by 
Theorem~\ref{tc.lc}. However, in the wall crossing of 
\cite{ADL}, they replace this by 
$(\PP(1,1,4),\frac{3}{4}[x^{4}y^{4}=z^{2}])$ 
(cf., \cite{ADL}, also \cite{OSS}). This 
gives an example of further reduction process. 
See \cite{mod} for further discussion. 
\end{Rem}

\subsection{Strong open stable reduction}
This subsection shows another stable reduction type theorem in 
the case approximated by log Fano pairs with 
conical singular weak K\"ahler-Einstein metrics. We start with a subtle remark. 
\begin{Rem}
Consider a punctured family $((\X^{*}, \mathcal{D}^{*}),\mathcal{L}^{*})\to \Delta^{*}$ 
of strongly open K-polystable log Calabi-Yau pairs, 
whose fibers $\X_{t} (t\neq 0)$ are $\Q$-Fano varieties. 
Even if they are smooth Fano varieties with K\"ahler-Einstein metrics, 
whose existence ensures the strongly open K-polystabilities 
of $((\X_{t},\mathcal{D}_{t}),\mathcal{L}_{t})$ for any $t\neq 0$ 
(cf., \cite{LiSu}, 
\cite{OS}), in general, 
we can{\it -not} get strongly open K-polystable central fiber 
after possible finite base change, 
simply by applying the K-stable reduction to $\X_{t}$s by 
\cite{DS}. 

For instance, take a family of Tian-Yau metric on the complements of 
quartic K3 surfaces which degenerates to the doubled quadric surface. 
\end{Rem}
Nevertheless, by making the conical angles rather small (acute), we still 
get following result. 
\vspace{2mm}
\begin{Thm}[Strong open K-polystable reduction]\label{ststr}
Consider a punctured polarized proper family 
$((\X^{*}, \mathcal{D}^{*}),\mathcal{L}^{*})\to \Delta^{*}=\Delta\setminus 
\{0\}$ 
of strongly open K-polystable log Calabi-Yau pairs. We suppose 
either of the following: 
\begin{enumerate}
\item \label{assum1} the fibres are 
asymptotic log Fano pairs in the sense of 
\cite{CR} and a conjecture on $\delta$-invariant minimization 
(\cite[Conjecture 5.1]{ZZ}) holds. 
\item \label{assum2} simply $X_{t}$s are Fano manifolds 
with conical K\"ahler-Einstein metrics whose singularities $D_{t}$ are smooth. 
\end{enumerate}

Then, possibly after finite base change, 
there is its model $(\X,\mathcal{D}) \twoheadrightarrow \Delta\ni 0$ 
such that the central fiber $((\X_{0},\mathcal{D}_{0}),\mathcal{L}_{0})$ 
is strongly open K-polystable. 
Moreover, such filling is unique. 
\end{Thm}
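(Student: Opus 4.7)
The plan is to reduce the problem to the K-polystable reduction theorem for families of log Fano pairs via the small-angle perturbations supplied by strong open K-polystability, and then pass to the limit $\beta\to 0^+$.

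First I would apply Theorem \ref{wstr4} after a finite base change to obtain a weakly open K-polystable dlt minimal model $(\X,\mathcal{D})\to\Delta$ as a candidate filling, and for every $t\neq 0$ invoke strong open K-polystability of the fiber to produce, for all small $\beta>0$, a log K-polystable perturbed triple
\[
\bigl((\X_t,\{\mathcal{D}_t\}+(1-\beta)\lfloor\mathcal{D}_t\rfloor),\ \mathcal{L}_t(\tfrac{c}{\beta}\lfloor\mathcal{D}_t\rfloor)\bigr).
\]
Under (i), since $K_{\X_t}+\mathcal{D}_t\equiv 0$ forces $-(K_{\X_t}+\{\mathcal{D}_t\}+(1-\beta)\lfloor\mathcal{D}_t\rfloor)\equiv\beta\lfloor\mathcal{D}_t\rfloor$, the asymptotic log Fano hypothesis of \cite{CR} makes this a log Fano pair. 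Under (ii), the log Fano property is built into the assumption and the log K-polystability is a consequence of the existence of conical K\"ahler-Einstein metrics via Berman-type results.

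Next, for each fixed sufficiently small $\beta>0$, I would apply the K-polystable reduction for families of log Fano pairs --- the log extension of the Li-Wang-Xu / Blum-Liu-Xu / Liu-Xu-Zhuang / Alper-Blum-Halpern-Leistner-Xu program --- to obtain a unique K-polystable central fiber $((\X_0^\beta,\mathcal{D}_0^\beta),\mathcal{L}_0^\beta)$ filling the $\beta$-perturbed family after another finite base change. The crucial step is to show that, for $0<\beta\ll 1$, this central fiber is independent of $\beta$, so the common model becomes the strongly stabilizing compactification in the sense of Definition \ref{ops3}. Under (i), this is where the $\delta$-invariant minimization conjecture enters: the destabilizing minimizing valuations are rigid under the one-parameter perturbation, and combined with the linearity of the log Donaldson-Futaki invariant in the boundary coefficient (\cite[3.7]{OS}, cf.\ Lemma \ref{interm}), this forces the K-polystable fillings to coincide for all small $\beta$. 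Under (ii), the analogous rigidity is delivered by the smoothness of $\mathcal{D}_t$ and analytic convergence of conical K\"ahler-Einstein metrics as $\beta\to 0$, from which a unique smooth-boundary compactification can be extracted.

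The uniqueness of the resulting strongly open K-polystable filling then follows directly from the uniqueness at each fixed $\beta$ in the log Fano K-polystable reduction, together with the common-model conclusion above. The main obstacle is precisely this $\beta$-uniformity: one must produce a single threshold $\beta_0>0$ for which the K-stable reduction yields the same filling across the whole interval $(0,\beta_0)$, rather than a family of fillings varying with $\beta$, and one must ensure this uniformity holds simultaneously for the total space and for the central fiber. The hypotheses (i) and (ii) are tailored exactly for this --- the $\delta$-invariant minimization conjecture, respectively the analytic compactness of conical K\"ahler-Einstein metrics with smooth divisor, are what make the $\beta\to 0^+$ limit well-defined and compatible with the flat family; without them, the method only delivers weak open K-polystable reduction, which is essentially the content of Theorem \ref{wstr4}.
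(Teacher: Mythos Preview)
Your overall strategy --- perturb to small $\beta$, apply K-polystable reduction for log Fano pairs, then argue $\beta$-independence --- matches the paper's, but the execution of the crucial $\beta$-independence step has a genuine gap, and you misplace the role of the $\delta$-invariant minimization conjecture.

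In the paper, the conjecture of \cite[Conjecture 5.1]{ZZ} (via the announced \cite{BHLLX}) is invoked only for the \emph{existence} of the K-polystable filling in case \eqref{assum1}, not for $\beta$-independence. The $\beta$-independence is obtained by an entirely different mechanism: once you have a filling $(\X_0,(1-\beta)\mathcal{D}_0)$ which is klt and K-polystable, the log canonical threshold $\lct(\X_0;\mathcal{D}_0)$ is at least $1-\beta$; the ACC for log canonical thresholds \cite[Theorem 1.1]{HMX} then forces $\lct(\X_0;\mathcal{D}_0)\ge 1$ for all sufficiently small $\beta$. Hence $(\X_0,\mathcal{D}_0)$ is log canonical, so log K-semistable by Theorem~\ref{OSrev}, and then affine-linearity of the log Donaldson--Futaki invariant in the boundary coefficient promotes the single-$\beta$ K-polystability to K-polystability on the whole interval $(0,\beta_0)$, i.e.\ strong open K-polystability. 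Your proposed substitute --- ``rigidity of destabilizing minimizing valuations under one-parameter perturbation'' --- is not a known statement and it is unclear how it would yield the same conclusion; without ACC you have no control on whether the limit pair $(\X_0,\mathcal{D}_0)$ is even log canonical, which is precisely what is needed to interpolate.

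Two smaller points: the initial appeal to Theorem~\ref{wstr4} is unnecessary (the paper works directly with the log Fano perturbation and never passes through a weak open K-polystable dlt model), and the uniform threshold $\beta_0$ valid for all $t\in\Delta^*$ is established in the paper via the alpha-invariant/Skoda estimates \cite[5.4, 5.5]{OS}, \cite[\S2]{Od}, not merely from the fibrewise strong open K-polystability assumption.
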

\vspace{1mm}
\begin{proof}
By the Zariski openness of ampleness of line bundles for flat variation, 
there is a uniform constant $0<c_{0} \ll 1$ such that 
$(\X_{t},c\mathcal{D}_{t})$ are (klt) log Fano pairs for any $(0<)c<c_{0}$ and 
any $t\in \Delta^{*}$. 

By \cite[5.4, 5.5]{OS} and the usual Skoda type estimate 
(cf., e.g., \cite[\S 2]{Od}), there is a uniform $\beta_{0}>0$ 
such that $(\X_{t},(1-\beta)\mathcal{D}_{t})$ is 
log K-polystable for any $(0<)\beta<\beta_{0}$ and any $t$. 

For any such fixed $\beta$, from \cite{BHLLX} for the case \eqref{assum1} 
and from \cite[III, Theorem 2]{CDS} in the case \eqref{assum2} 
(cf., also \cite{Tia}), there is a model $(\X,\mathcal{D})$ such that 
$(\X_{t},(1-\beta)\mathcal{D}_{t})$ is a log K-polystable $\Q$-Fano pair. 

What remains is to show its independence of small $\beta$. 
Note that 
the log canonical threshold of above obtained $\X_{0}$ 
with respect to $\mathcal{D}_{t}$ is at least $1-\beta$ 
from the klt property of the obtained pair 
$(\X_{0},(1-\beta)\mathcal{D}_{0})$. Therefore, from 
\cite[Theorem 1.1]{HMX}, for small enough $\beta\ll 1$, 
it should be at least $1$. In other words, 
$(\X_{0},\mathcal{D}_{0})$ is log canonical so that it is 
log K-semistable with respect to any polarization, by \cite[6.3]{OS}
($=$Theorem~\ref{OSrev}). Therefore, from the affine-linearity of the log 
Donaldson-Futaki invariant with respect to the coefficient of the boundary, 
it follows that $((\X_{0},\mathcal{D}_{0}),\mathcal{L}_{0})$ is 
strongly open K-polystable. 

The uniqueness assertion in the case \eqref{assum1} 
follows from \cite{BX}, while it in the case \eqref{assum2} follows from 
the Gromov-Hausdorff approach \cite{DS,CDS,Tia}. 
\end{proof}

\begin{Rem}
For general punctured family of polarized log Calabi-Yau pairs 
$((\X^{*}, \mathcal{D}^{*}),\mathcal{L}^{*})\to \Delta^{*}=\Delta\setminus 
\{0\}$, 
the uniqueness of strongly open K-polystable reduction as in the above 
Theorem~\ref{ststr} case 
does not necessarily hold. 
\end{Rem}
We discuss related developments in 
\cite{mod}. 


\bigskip

\footnotesize 
\noindent
Email address: \footnotesize 
{\tt yodaka@math.kyoto-u.ac.jp} \\
Affiliation: Department of Mathematics, Kyoto university, Japan  \\

\end{document}